\RequirePackage[l2tabu,orthodox]{nag}
\documentclass[reqno,12pt,oneside]{amsart}
\usepackage{amsmath}
\usepackage{amssymb}
\usepackage{stmaryrd}
\usepackage{bm} % Bold math symbols
\usepackage{xcolor}
%\usepackage{authblk}
%\usepackage[affil-it]{authblk}
%\usepackage{caption}
%\usepackage{subcaption}
%\usepackage{smartdiagram}
%\smartdiagramset{module minimum width=0, module minimum height=0, text width=1em, uniform color list=gray!20 for 10 items, arrow color=gray, uniform arrow color=true}
%\usepackage{graphicx}% delete [demo] later
\usepackage{mathtools}
\usepackage{enumitem}

\usepackage[nospace,noadjust]{cite}
\usepackage{geometry}

%\usepackage{tikz-cd}
%\usepackage{tikz}
%\usetikzlibrary{matrix}

\usepackage{indentfirst}

\definecolor{linkblue}{RGB}{1,1,190}
\definecolor{citegreen}{RGB}{1,190,1}
\usepackage[linkcolor=linkblue
           ,citecolor=citegreen
           ,ocgcolorlinks        % This ensures that colored links are
                                 % printed black/white, while appearing
                                 % colored on screen.
                                 % WARNING: Does not work with line breaks
                                 % in links.
           ,bookmarksopen=True,  % Automatically expand bookmarks in PDF
           ]{hyperref}
\usepackage{microtype}

%\allowdisplaybreaks

\numberwithin{equation}{section}

\newcommand{\bN}{\mathbb{N}}
\newcommand{\bZ} {\mathbb{Z}}

\newcommand{\bQ}{\mathbb{Q}}

\newcommand{\cA}{\mathcal{A}}
\newcommand{\cS}{\mathcal{S}}

\newcommand{\cP}{\mathcal{P}}

\newcommand{\cT}{\mathcal{T}}

\DeclareMathOperator{\GL}{GL}

\DeclareMathOperator{\chr}{char}
\newcommand{\val}{\mathsf v}
 % multiplicative valuation

\renewcommand{\vec}{\bm}
\newcommand{\ind}{\mathbf 1}
\newcommand{\defit}[1]{\textsf{#1}}

\DeclareMathOperator{\supp}{supp}

\DeclarePairedDelimiter{\card}{\lvert}{\rvert}

\newcommand{\vx}{{\vec x}}
\newcommand{\vn}{{\vec n}}

\newcommand{\vlambda}{{\vec \lambda}}
\newcommand{\valpha}{{\vec\alpha}}
\newcommand{\vbeta}{{\vec\beta}}

%\newcommand{\beql}{\begin{equation}}
%\newcommand{\eeq}{\end{equation}}

%\tolerance=10000

\newtheorem{thm}{Theorem}[section]

\newtheorem{prop}[thm]{Proposition}
\newtheorem{lem}[thm]{Lemma}
\newtheorem{cor}[thm]{Corollary}

\theoremstyle{definition}
\newtheorem{defn}[thm]{Definition}
\newtheorem{remark}[thm]{Remark}
\newtheorem{example}[thm]{Example}

\newenvironment{smallremark}{\begin{remark}\small}{\end{remark}} 

\newcommand{\llparen}{(\mkern-3mu(}
\newcommand{\rrparen}{)\mkern-3mu)}

% Highest level enum of the form (1), (2), ...
\setlist[enumerate,1]{label=\textup{(\arabic*)}, ref=\textup{(}\arabic*\textup{)}, leftmargin=0.75cm}
% Second level enum of the form (i), (ii), ...
\setlist[enumerate,2]{label=\textup{(}\roman*\textup{)}, ref=\textup{(}\roman*\textup{)}}

% Special enumeration for equivalences
\newlist{equivenumerate}{enumerate}{1}
\setlist[equivenumerate,1]{%
  label=\textup{(\alph*)},
  ref=\textup{(}\alph*\textup{)},
  leftmargin=0.75cm
}

\newlist{equivenumerate*}{enumerate*}{1}
\setlist*[equivenumerate*,1]{%
  label=\textup{(\alph*)},
  ref=\textup{(}\alph*\textup{)},
  leftmargin=0.75cm
}

%%%NEW 
\def\binom#1#2{{#1}\choose{#2}}
%%%%%NEW

\title[Arithmetic restrictions on $D$-finite series]{$D$-finite multivariate series with arithmetic restrictions on their coefficients}

\author{Jason Bell}
\address{Department of Pure Mathematics, University of Waterloo, Waterloo, ON, Canada N2L 3G1}
\email{jpbell@uwaterloo.ca}

\author{Daniel Smertnig}
\address{University of Graz, Institute for Mathematics and Scientific Computing, NAWI Graz, Heinrichstrasse 36, 8010 Graz, Austria}
\email{daniel.smertnig@uni-graz.at}

%\begin{document}

\begin{document}

\begin{abstract}
  A multivariate, formal power series over a field $K$ is a Bézivin series if all of its coefficients can be expressed as a sum of at most $r$ elements from a finitely generated subgroup $G \le K^*$; it is a Pólya series if one can take $r=1$.
  We give explicit structural descriptions of $D$-finite Bézivin series and $D$-finite Pólya series over fields of characteristic $0$, thus extending classical results of P\'olya and B\'ezivin to the multivariate setting.
\end{abstract}

\maketitle

\section{Introduction}

A fundamental result in the theory of linear recurrences due to P\'olya  \cite{polya21} asserts that if $\{f(n)\}$ is a sequence satisfying a linear recurrence and taking all of its values in the set $G\cup \{0\}$ for some finitely generated multiplicative subgroup $G$ of $\mathbb{Q}^*$, then the generating series of $f(n)$ is a finite sum of series of the form $gx^i/(1-g'x^n)$ with $g,g'\in G$ and $i,n$ natural numbers along with a finite set of monomials with coefficients in $G$; moreover, these series can be chosen to have disjoint supports.  In particular, P\'olya's theorem gives a complete characterization of sequences $\{f(n)\}\subseteq \mathbb{Q}^*$ that have the property that both $\{f(n)\}$ and $\{1/f(n)\}$ satisfy a non-trivial linear recurrence. 

P\'olya's result was later extended to number fields by Benzaghou \cite{benzaghou70} and then by Bézivin \cite{bezivin87} to all fields (even those of positive characteristic). A noncommutative multivariate version was recently proved by the authors \cite{bell-smertnig21}; the noncommutative variant can be interpreted as structural description of unambiguous weighted finite automata over a field.

The generating function of a sequence satisfying a linear recurrence is the power series in some variable $x$ of a rational function about $x=0$.  When one adopts this point of view, it is natural to ask whether such results can be extended to $D$-finite (or differentiably finite) power series.  We recall that if $K$ is a field of characteristic zero then a univariate power series $F(x) =\sum f(n)x^n\in K[[x]]$ is \defit{$D$-finite} if $F(x)$ satisfies a non-trivial homogeneous linear differential equation with polynomial coefficients:
$$\sum_{i=0}^n p_i(x) F^{(i)}(x)=0,$$ with $p_0(x),\ldots ,p_n(x)\in K[x]$. 

Univariate $D$-finite series were introduced by Stanley \cite{stanley80} and the class of $D$-finite power series is closed under many operations, such as taking $K$-linear combinations, products, sections, diagonals, Hadamard products, and derivatives, and it contains a multitude of classical generating functions arising from enumerative combinatorics \cite[Chapter 6]{stanley99}. In particular, algebraic series and their diagonals are $D$-finite, and a power series $F$ is $D$-finite if and only if its sequence of coefficients satisfies certain recursions with polynomial coefficients \cite[Theorem 3.7]{lipshitz89}.

Bézivin gave a sweeping extension of P\'olya's result, showing that a univariate $D$-finite Bézivin series over a field $K$ of characteristic $0$ with the property that there is a fixed $r\ge 1$ and a fixed finitely generated subgroup $G$ of $K^*$ such that every coefficient of $F(x)$ can be written as a sum of at most $r$ elements of $G$ is in fact a rational power series and has only simple poles \cite{bezivin86}.

A natural, and thus far unexplored, direction in which to extend the results of P\'olya and B\'ezivin is to consider multivariate analogues. A multivariate variant of $D$-finite series was given by Lipshitz \cite{lipshitz89}.  Here one has a field $K$ of characteristic zero and declares that a formal multivariate series
\[
  F=\sum_{\vec n \in \bN^d} f(\vec n) \vx^{\vec n} \in K\llbracket \vx\rrbracket=K\llbracket x_1,\ldots,x_d\rrbracket,
\]
is \defit{$D$-finite} if all the partial derivatives $(\partial/\partial x_1)^{e_1} \cdots (\partial/\partial x_d)^{e_d}F$ for $e_1$, $\ldots\,$,~$e_d \ge 0$ are contained in a finite-dimensional vector space over the rational function field $K(\vx)$.
Equivalently, for each $i \in [1,d]$, the series $F$ satisfies a linear partial differential equation of the form
\[
  P_{i,n} \cdot (\partial/\partial x_i)^n F + P_{i,n-1}  \cdot (\partial/\partial x_i)^{n-1} F + \cdots + P_{i,1} \cdot (\partial/\partial x_i) F + P_{i,0}\cdot F = 0.
\]
with polynomials $P_{i,0}$, $P_{i,1}$, $\ldots\,$,~$P_{i,n} \in K[\vx]$, at least one of which is nonzero.

In fact, many interesting classical Diophantine questions can be expressed in terms of questions about coefficients of multivariate rational power series and multivariate $D$-finite series.  To give one example, Catalan's conjecture (now a theorem due to Mihăilescu \cite{mihailescu04}) states that the only solutions to the equation $3^n=2^m+1$ are given by $(n,m)=(1,1)$ and $(2,3)$.  This is equivalent to the statement that the bivariate rational power series 
$$1/((1-3x_1)(1-x_2))-1/((1-x_1)(1-2x_2)) - 1/((1-x_1)(1-x_2))$$ has nonzero coefficients except for the coefficients of $x_1^2x_2^3$ and $x_1x_2$.  On the other hand, it is typically much more difficult to obtain results about multivariate rational functions and multivariate $D$-finite series for several reasons.  In the case of univariate rational series, the coefficients have a nice closed form and there is a strong structural description of the set of zero coefficients due to Skolem, Mahler, and Lech (see \cite[Chapter 2.1]{everest-vanderpoorten-shparlinski-ward03}).  Similarly, for $D$-finite series there are many strong results concerning the asymptotics of their coefficients (see \cite{flajolet-sedgewick09}, in particular Chapters VII.9 and VIII.7) and one can often make use of these results when considering problems in the univariate case.
% These things become considerably more unwieldy in higher dimensions, however, and for this reason new ideas are often important in obtaining multivariate extensions.\footnote{I'm not sure if this is the best way of phrasing things.}
Straightforward attempts at extending these approaches to higher dimensions typically fail or become too unwieldy.
For this reason new ideas are often important in obtaining multivariate extensions.

It is therefore of considerable interest to understand multivariate $D$-finite series, although doing so often presents additional technical difficulties. In this paper we consider structural results for multivariate series that are implied by the additional restrictions on the coefficients of a $D$-finite series that were imposed by B\'ezivin and P\'olya in the univariate case.
For a field $K$ of characteristic zero and a multiplicative subgroup $G \le K^*$ we set $G_0 \coloneqq G \cup \{0\}$ and write
\[
  rG_0 \coloneqq \{\, g_1 + \cdots + g_r : g_1, \ldots, g_r \in G_0 \,\}
\]
for the $r$-fold sumset of $G_0$.

In view of B\'ezivin's and P\'olya's results we give the following definitions.  
\begin{defn} \label{d:bezivin}
  A power series $F=\sum_{\vec n \in \bN^d} f(\vec n) \vx^{\vec n} \in K\llbracket \vx \rrbracket$ is
  \begin{itemize}
  \item a \defit{Bézivin series} if there exists a finitely generated subgroup $G \le K^*$ and $r \in \bN$ such that $f(\vec n) \in rG_0$ for all $\vec n \in \bN^d$;
  \item a \defit{Pólya series} if there exists a finitely generated subgroup $G \le K^*$ such that $f(\vec n) \in G_0$ for all $\vec n \in \bN^d$.
  \end{itemize}
\end{defn}

The terminology of Pólya series is standard; we have chosen to name Bézivin series based on Bézivin's results characterizing such this type of series in the univariate case \cite{bezivin86,bezivin87}.

In this paper we completely characterize multivariate $D$-finite Bézivin and Pólya series.
As an immediate consequence, we obtain a characterization of $D$-finite series whose Hadamard (sub-)inverse is also $D$-finite.
The proofs make repeated use of unit equations and results from the theory of semilinear sets.

To state our main result, we recall the notion of semilinear subsets of $\mathbb{N}^d$. A subset $\cS \subseteq \bN^d$ is \defit{simple linear} if it is of the form $\cS = \vec a_0 + \vec a_1\bN + \cdots + \vec a_s \bN$ with ($\bZ$-)linearly independent $\vec a_1$, $\ldots\,$,~$\vec a_s \in \bN^d$.
The terminology comes from the theory of semilinear sets; see Section~\ref{ssec:semilinear} below.
The following is our main result.

\begin{thm} \label{thm:main-bezivin} Let $K$ be a field of characteristic zero, let $d \ge 0$, and let
  \[
    F = \sum_{\vec n \in \bN^d} f(\vec n) \vx^{\vec n} \in K\llbracket \vec x\rrbracket=K\llbracket x_1,\ldots,x_d \rrbracket
  \]
  be a Bézivin series, with all coefficients of $F$ contained in $rG_0$ for some finitely generated subgroup $G \le K^*$ and $r \in \bN$.
  Then the following statements are equivalent.
  \begin{equivenumerate}
  \item\label{t-bezivin:dfinite} $F$ is $D$-finite.
  \item\label{t-bezivin:rational} $F$ is rational.
  \item\label{t-bezivin:skewgeom} $F$ is a \textup{(}finite\textup{)} sum of skew-geometric series with coefficients in $G$, that is, rational functions of the form
    \[
      \frac{g_0u_0}{(1-g_1 u_1)\cdots (1-g_l u_l)}
    \]
    where $u_0$, $u_1$, $\ldots\,$,~$u_l$ are monomials in $x_1$,~$\ldots\,$,~$x_d$ such that $u_1$, $\ldots\,$,~$u_l$ are algebraically independent, and $g_0$, $g_1$,~$\ldots\,$,~$g_l \in G$.

   \item\label{t-bezivin:skewgeom-refined} As in \ref{t-bezivin:skewgeom}, but in addition, the sum may be taken in such a way that for any two summands the support is either identical or disjoint.
     Moreover, every $\vec n \in \bN^d$ is contained in the support of at most $r$ summands.
   \item\label{t-bezivin:coeffs} There exists a partition of $\bN^d$ into finitely many simple linear sets so that on each such set $\cS=\vec a_0 + \vec a_1 \bN + \cdots + \vec a_s \bN$ with $\vec a_1$, $\ldots\,$,~$\vec a_s$ linearly independent,
     \[
       f(\vec a_0 + m_1 \vec a_1 + \cdots + m_s \vec a_s) = \sum_{i=1}^l g_{i,0} g_{i,1}^{m_1} \cdots g_{i,s}^{m_s} \qquad\text{for $(m_1,\ldots,m_s) \in \bN^s,$}
     \]
     where $0 \le l \le r$ and $g_{i,j} \in G$ for $i \in [1,l]$ and $j \in [0,s]$.
  \end{equivenumerate}
\end{thm}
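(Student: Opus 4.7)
The implications $(b) \Rightarrow (a)$, $(c) \Rightarrow (b)$, and $(d) \Rightarrow (c)$ are immediate. For $(e) \Rightarrow (d)$, on each simple linear set $\cS = \vec a_0 + \vec a_1 \bN + \cdots + \vec a_s \bN$ of the partition, I would expand the formula for $f$ as the sum of the $l \le r$ skew-geometric series
\[
  \frac{g_{i,0}\, \vec x^{\vec a_0}}{(1 - g_{i,1} \vec x^{\vec a_1}) \cdots (1 - g_{i,s} \vec x^{\vec a_s})}, \qquad i \in [1,l].
\]
Linear independence of $\vec a_1, \ldots, \vec a_s$ forces each such summand to have support exactly $\cS$, so distinct pieces of the partition contribute disjoint supports, and every $\vec n$ is covered by at most $r$ summands.

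The substantive direction is $(a) \Rightarrow (e)$. My plan is to reduce to Bézivin's univariate theorem along affine lines in $\bN^d$ and then assemble the resulting one-dimensional descriptions into a global semilinear description using a multivariate $S$-unit equation argument. For any $\vec a \in \bN^d$ and any primitive $\vec w \in \bN^d$, the univariate section
\[
  F_{\vec a,\vec w}(t) = \sum_{n \in \bN} f(\vec a + n \vec w)\, t^n
\]
inherits $D$-finiteness from $F$ by the standard closure properties of $D$-finite (equivalently, P-recursive) series, and it is automatically Bézivin with the same pair $(r, G)$. The univariate Bézivin theorem applied to $F_{\vec a,\vec w}$ then produces, for every $\vec a$ and $\vec w$, an expression of $f(\vec a + n \vec w)$ as a sum of at most $r$ geometric sequences in $n$ whose scalars and bases lie in $G$.

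To make these line-by-line descriptions coherent across $\bN^d$, I would appeal to the Evertse--Schlickewei--Schmidt bound on non-degenerate solutions of $S$-unit equations. Together with the structure theory of semilinear sets, this bound yields a finite partition of $\bN^d$ into pieces on which the non-degenerate part of a representation $f(\vec n) = g_1(\vec n) + \cdots + g_{r'}(\vec n)$ with $r' \le r$ is determined by well-defined component functions $g_i \colon \bN^d \to G$. Combining this uniqueness with the one-dimensional geometric structure established above, applied along every line in each piece, forces each component $g_i$ to be multiplicatively geometric in $\vec n$ on that piece.

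A final refinement, using the standard decomposition of a semilinear set as a disjoint union of simple linear sets, converts this into the form required by (e). The main obstacle is precisely this assembly step: reconciling the partition coming from $S$-unit non-degeneracy with the simple linear structure, while preserving the bound $l \le r$ on the number of geometric summands on each piece. Making this work requires that the Bézivin bound $r$ be uniform in $\vec n$ (which is part of the hypothesis) and that the semilinear framework be robust enough to allow repeated refinement without destroying the multiplicative description of the coefficients.
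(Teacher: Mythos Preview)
Your handling of the easy directions and of (e)$\Rightarrow$(d) is correct.

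For (a)$\Rightarrow$(e), however, there is a genuine gap, and it is not merely technical. The first ingredient is fine: restricting a $P$-recursive sequence to an arithmetic line $\vec a + \bN\vec w$ does yield a $P$-recursive sequence (via shifts, multisections in each coordinate, and a full diagonal), so the univariate B\'ezivin theorem applies line by line. The problem is the assembly step, which you yourself flag as ``the main obstacle'' but do not resolve. Knowing that $f|_L$ is a sum of at most $r$ geometric sequences with parameters in $G$ for every line $L$ gives no control over how those parameters vary as $L$ ranges over infinitely many lines: $G$ is infinite, and the representation $f(\vec n)=g_1(\vec n)+\cdots+g_r(\vec n)$ is highly non-unique, so there are no canonical component functions $g_i\colon\bN^d\to G$ for an $S$-unit argument to pin down. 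The Evertse--van der Poorten--Schlickewei theorem bounds projective solutions of a \emph{single} unit equation; it does not by itself produce a semilinear partition of $\bN^d$, nor does it force coherence across a $\bN^d$-indexed family of representations. What you describe as a ``final refinement'' is in fact the entire content of the theorem, and nothing in the proposal supplies the missing global finiteness (a fixed finite pool of exponential bases).

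The paper's route is quite different and never reduces to the univariate theorem. It proves (a)$\Rightarrow$(b)$\Rightarrow$(d)$\Leftrightarrow$(e). For (a)$\Rightarrow$(b) (Theorem~\ref{thm:rational}) one works directly with the multivariate $P$-recursion: a B\'ezivin-style unit-equation lemma (Lemma~\ref{l:geom}) forces a suitable polynomial multiple of $F$ to vanish on slabs $\bN_{\ge c}^{i-1}\times[l_i,l_i+k-1]\times\bN_{\ge c}^{d-i}$, the recursion propagates this vanishing to all of $\bN_{\ge l}^d$ (Lemma~\ref{l:zero}), and one inducts on $d$. For (b)$\Rightarrow$(d) the key new idea is a structural constraint on the \emph{denominator}: working in the algebraically closed Hahn series field $K\llparen x_1^{\bQ},\ldots,x_{d-1}^{\bQ}\rrparen$, one shows that every root of an irreducible factor of the denominator (viewed as a polynomial in $x_d$) must be a monomial (Proposition~\ref{p:series-is-monomial}, Theorem~\ref{t:denominator1}). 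Once every irreducible factor has the form $1-cu$ with $u$ a monomial, the polynomial-exponential machinery of Section~\ref{sec:special} gives a piecewise description on simple linear sets, and a further unit-equation argument (Proposition~\ref{p:coeff-in-g}) upgrades this to constants in $G$ with at most $r$ summands per point. The global finiteness you need comes from rationality together with the shape of the denominator, not from line-by-line data.
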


The description of $D$-finite Pólya series, that is, the case $r=1$ of the previous theorem, deserves separate mention, although it follows readily from the more general result on Bézivin series.
Following terminology from formal language theory, a sum of power series $F_1$, $\ldots\,$,~$F_n$ is \defit{unambiguous} if the support of $F_i$ is disjoint from the support of $F_j$ for $i \ne j$ (see Definition~\ref{d:unambiguous} below).

\begin{thm} \label{thm:main-polya}
  Let $K$ be a field of characteristic zero, let $d \ge 0$, and let
  \[
    F = \sum_{\vec n \in \bN^d} f(\vec n) \vx^{\vec n} \in K\llbracket \vec x\rrbracket=K\llbracket x_1,\ldots,x_d \rrbracket
  \]
  be a Pólya series with coefficients contained in $G_0$ for some finitely generated subgroup $G \le K^*$.
  Then the following statements are equivalent.
  \begin{equivenumerate}
  \item \label{t-polya:dfinite} $F$ is $D$-finite.
  \item \label{t-polya:rational} $F$ is rational.
  \item \label{t-polya:skewgeom} $F$ is a \textup{(}finite\textup{)} \emph{unambiguous} sum of skew-geometric series with coefficients in $G$.
  \item \label{t-polya:coeff} The support of $F$ can be partitioned into finitely many simple linear sets so that on each such set $\cS=\vec a_0 + \vec a_1 \bN + \cdots + \vec a_s \bN$ with $\vec a_1$, $\ldots\,$,~$\vec a_s$ linearly independent,
     \[
       f(\vec a_0 + m_1 \vec a_1 + \cdots + m_s \vec a_s) = g_{0} g_{1}^{m_1} \cdots g_{s}^{m_s} \qquad\text{for $(m_1,\ldots,m_s) \in \bN^s,$}
     \]
     with $g_{j} \in G$ for $j \in [0,s]$.
  \end{equivenumerate}
\end{thm}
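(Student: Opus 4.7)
The plan is to deduce Theorem~\ref{thm:main-polya} as the specialization of Theorem~\ref{thm:main-bezivin} to the case $r = 1$. Since $1 \cdot G_0 = G_0$, a Pólya series is precisely a Bézivin series with $r = 1$, so the hypotheses of Theorem~\ref{thm:main-bezivin} are in force, and the equivalence of \ref{t-polya:dfinite} and \ref{t-polya:rational} is immediate from the equivalence of \ref{t-bezivin:dfinite} and \ref{t-bezivin:rational}.

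For \ref{t-polya:skewgeom}, I would invoke the refined form \ref{t-bezivin:skewgeom-refined} rather than the bare \ref{t-bezivin:skewgeom}. With $r = 1$ the additional assertion says that every $\vec n \in \bN^d$ lies in the support of at most one summand; combined with the property that any two supports are identical or disjoint, this forces the nontrivial summands to have pairwise disjoint supports, which is precisely the condition that the sum is unambiguous in the sense of Definition~\ref{d:unambiguous}. The reverse direction is routine: an unambiguous finite sum of skew-geometric series is a finite sum of rational functions, hence rational, and so $D$-finite.

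For \ref{t-polya:coeff}, I would specialize \ref{t-bezivin:coeffs} to $r=1$. The resulting partition of $\bN^d$ consists of simple linear sets on each of which either $f$ vanishes identically (the case $l = 0$) or $f$ is given by a single skew-geometric expression $g_0 g_1^{m_1}\cdots g_s^{m_s}$ with $g_j \in G$ (the case $l = 1$); since $G \subseteq K^*$, the latter pieces lie entirely inside $\supp(F)$ and the former entirely outside it. Discarding the $l = 0$ pieces yields the partition of $\supp(F)$ demanded by \ref{t-polya:coeff}. The converse is immediate, as each piece contributes a skew-geometric rational function to $F$, making $F$ rational.

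The entire deduction is essentially bookkeeping, and there is no genuine obstacle: all of the real work has been done in Theorem~\ref{thm:main-bezivin}. The only minor points to verify are the two identifications above, namely that ``at most $r = 1$ summand per index'' together with the dichotomy ``identical or disjoint supports'' is equivalent to pairwise disjoint supports, and that the $l = 0$ pieces of a partition of $\bN^d$ correspond precisely to the complement of $\supp(F)$.
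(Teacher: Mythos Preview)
Your proposal is correct and follows essentially the same route as the paper: specialize Theorem~\ref{thm:main-bezivin} to $r=1$, read off the unambiguity in \ref{t-polya:skewgeom} from the ``at most $r$ summands'' clause of \ref{t-bezivin:skewgeom-refined}, and obtain the partition of $\supp(F)$ in \ref{t-polya:coeff} by discarding the $l=0$ pieces of \ref{t-bezivin:coeffs}. The only cosmetic difference is that the paper closes the cycle via \ref{t-polya:rational}$\Rightarrow$\ref{t-polya:skewgeom}$\Rightarrow$\ref{t-polya:coeff}$\Rightarrow$\ref{t-polya:dfinite}, whereas you argue each of \ref{t-polya:skewgeom} and \ref{t-polya:coeff} separately against \ref{t-polya:rational}; both are equally valid bookkeeping.
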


For a power series $F = \sum_{\vec n \in \bN^d} f(\vec n) \vx^{\vec n} \in K\llbracket \vec x \rrbracket$, let
\[
  F^{\dagger} = \sum_{\vec n \in \bN^d} f(\vec n)^{\dagger} \vx^{\vec n},
\]
where $f(\vec n)^{\dagger} = f(\vec n)^{-1}$ if $f(\vec n)$ is nonzero and $f(\vec n)^\dagger$ is zero otherwise.
The series $F^\dagger$ is the \defit{Hadamard sub-inverse} of $F$.

We call a power series \defit{finitary} if its coefficients are contained in a finitely generated $\bZ$-subalgebra of $K$.
The set of finitary power series is trivially closed under $K$-linear combinations, products, sections, diagonals, Hadamard products, and derivatives.
Therefore the set of finitary $D$-finite series is closed under the same operations.
Algebraic series as well as their diagonals and sections are finitary $D$-finite (see Lemma~\ref{l:algebraic-finitary}).

\begin{cor} \label{c:hadamard}
  Let $F \in K\llbracket \vx \rrbracket$ be finitary $D$-finite.
  Then $F^\dagger$ is finitary if and only if $F$ satisfies the equivalent conditions of Theorem~\ref{thm:main-polya} for some finitely generated subgroup $G \le K^*$.
  In particular, if $F$ and $F^\dagger$ are both finitary $D$-finite, then they are in fact unambiguous sums of skew-geometric series.
\end{cor}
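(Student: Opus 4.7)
The approach is to reduce Corollary~\ref{c:hadamard} to Theorem~\ref{thm:main-polya} by means of a classical theorem of Samuel and Roquette: if $R$ is a finitely generated $\bZ$-subalgebra of a field $K$, then the unit group $R^*$ is a finitely generated abelian group. This is the bridge between the algebraic hypothesis (nonzero coefficients being units in some finitely generated $\bZ$-algebra) and the group-theoretic hypothesis appearing in Theorem~\ref{thm:main-polya}. The only nontrivial input is this unit theorem; everything else is a routine bookkeeping argument on top of Theorem~\ref{thm:main-polya}, and I foresee no additional obstacles.

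For the forward direction, assume $F$ is finitary $D$-finite and that $F^\dagger$ is finitary. Choose a finitely generated $\bZ$-subalgebra $R \subseteq K$ containing the coefficients of both $F$ and $F^\dagger$. For every $\vec n \in \bN^d$ with $f(\vec n) \ne 0$, both $f(\vec n)$ and $f(\vec n)^{-1}$ lie in $R$, so $f(\vec n) \in R^*$. By the Samuel--Roquette theorem, $G \coloneqq R^*$ is a finitely generated subgroup of $K^*$, so $F$ is a Pólya series with coefficients in $G_0$. Since $F$ is also $D$-finite, Theorem~\ref{thm:main-polya} applies.

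For the reverse direction, suppose $F$ satisfies the equivalent conditions of Theorem~\ref{thm:main-polya} for some finitely generated $G \le K^*$. All nonzero coefficients of $F$ lie in $G$, hence the nonzero coefficients of $F^\dagger$ lie in $G$ as well. All coefficients of $F^\dagger$ are then contained in the finitely generated $\bZ$-subalgebra $\bZ[G] \subseteq K$, so $F^\dagger$ is finitary. The final sentence of the corollary follows by applying the forward direction to both $F$ and $F^\dagger$---each being finitary $D$-finite by assumption, with $(F^\dagger)^\dagger = F$---and invoking Theorem~\ref{thm:main-polya}\ref{t-polya:skewgeom}.
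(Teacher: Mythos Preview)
Your proof is correct and follows essentially the same route as the paper: the forward direction uses Roquette's theorem (which the paper packages as Lemma~\ref{l:hadamard-polya}) to show $F$ is P\'olya, and the reverse direction notes that the nonzero coefficients of $F^\dagger$ lie in $G$, hence in the finitely generated ring $\bZ[G]$. The only cosmetic difference is that for the converse the paper appeals to condition~\ref{t-polya:coeff} to see that $F^\dagger$ again satisfies the equivalent conditions, whereas you argue directly from the P\'olya hypothesis; both are immediate.
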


\subsection*{Notation}
Throughout the paper, we fix a field $K$ of characteristic $0$.
When considering a Bézivin series $F \in K\llbracket \vx \rrbracket$, we will always tacitly assume that $G \le K^*$ denotes a finitely generated subgroup, and $r \ge 1$ denotes a positive integer, such that every coefficient of $F$ is contained in $rG_0$.

\section{Preliminaries}

For $a$,~$b \in \bZ$, let $[a,b] \coloneqq \{\,x \in \bZ : a \le x \le b\,\}$.
Let $\bN = \{0,1,2,\ldots \}$ and $\bN_{\ge k} = \{\, x \in \bN : x \ge k \,\}$ for $k \in \bN$.

\subsection{Semilinear sets.}
\label{ssec:semilinear}

We summarize a few results from the theory of semilinear sets, and refer to \cite[Chapter II.7.3]{sakarovitch09} and \cite{dallesandro-intrigila-varricchio12} for more details.

\begin{defn}
  Let $d \ge 1$. A subset $\cA \subseteq \bN^d$ is
  \begin{itemize}
  \item \defit{linear} if there exist $\vec a$, $\vec b_1$, $\ldots\,$,~$\vec b_l \in \bN^d$ such that $\cA = \vec a + \vec b_1 \bN + \cdots + \vec b_l \bN$;
  \item \defit{semilinear} if $\cA$ is a finite union of linear sets;
  \item \defit{simple linear} if there exist $\vec a$, $\vec b_1$, $\ldots\,$,~$\vec b_l \in \bN^d$ such that $\cA = \vec a + \vec b_1 \bN + \cdots + \vec b_l \bN$ and $\vec b_1$, $\ldots\,$,~$\vec b_l$ are linearly independent over $\bZ$.
  \end{itemize}
\end{defn}

Whenever we consider a representation of a simple linear set of the form $\vec a + \vec b_1 \bN + \cdots + \vec b_l \bN$, we shall tacitly assume that the vectors $\vec b_1$, $\ldots\,$,~$\vec b_l$ are taken to be linearly independent.

We make some observations on the uniqueness of the presentation of a linear set $\cA$.
Suppose $\cA = \vec a + \vec b_1 \bN + \cdots + \vec b_l \bN$ with $\vec a$, $\vec b_1$, $\ldots\,$,~$\vec b_l$ as above.
The element $\vec a$ is uniquely determined by $\cA$, as it is the minimum of $\cA$ in the coordinatewise partial order on $\bN^d$.
Therefore also the associated monoid $\cA - \vec a \subseteq \bN^d$ is uniquely determined by $\cA$.
The set $\{\vec b_1,\ldots, \vec b_l\}$ must contain every atom of $\cA - \vec a$, that is, every element that cannot be written as a sum of two nonzero elements of $\cA - \vec a$.
If $l$ is taken minimal, then $\{\vec b_1,\ldots,\vec b_l\}$ is equal to the set of atoms of $\cA - \vec a$, and is therefore unique.
In particular, if $\cA$ is simple linear and $\vec b_1$, $\ldots\,$,~$\vec b_l$ are linearly independent, then the representation is unique (up to order of $\vec b_1$, $\ldots\,$,~$\vec b_l$).

If $\cA$ and $\cA'$ are two linear sets with the same associated monoid, then there exist $\vec a$, $\vec a'$, $\vec b_1$, $\ldots\,$,~$\vec b_l \in \bN^d$ with $\cA = \vec a + \vec b_1 \bN + \cdots + \vec b_l \bN$ and $\cA'= \vec a' + \vec b_1 \bN + \cdots + \vec b_l \bN$.
By choosing $l$ minimal, the choice of $\vec b_1$, $\ldots\,$,~$\vec b_l$ is again unique (up to order).

The semilinear subsets of $\bN^d$ are precisely the sets definable in the Presburger arithmetic of $\bN$, by a theorem of Ginsburg and Spanier \cite{ginsburg-spanier66}.
We shall make use of the following fundamental (but non-trivial) facts.

\begin{prop} \label{p:semilinear-boolean}
  The semilinear subsets of $\bN^d$ form a boolean algebra under set-theoretic intersection and union.
  In particular, finite unions and finite intersections of semilinear sets, as well as complements of semilinear sets, are again semilinear.
\end{prop}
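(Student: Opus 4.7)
Closure under finite unions is immediate from the definition of a semilinear set, so the substance of the proposition is closure under finite intersections and complements. The cleanest route is to reduce both to the Ginsburg--Spanier theorem already cited in the excerpt, which characterizes semilinear subsets of $\bN^d$ as exactly the Presburger-definable sets. In one direction, observe that a linear set $\vec a + \vec b_1 \bN + \cdots + \vec b_l \bN$ is the set of $\vec x \in \bN^d$ satisfying the Presburger formula
\[
  \exists\, n_1, \ldots, n_l \in \bN \colon \vec x = \vec a + n_1 \vec b_1 + \cdots + n_l \vec b_l,
\]
and finite unions of Presburger-definable sets are clearly Presburger-definable; so every semilinear set is Presburger-definable. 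The class of Presburger-definable subsets of $\bN^d$ is trivially closed under $\wedge$, $\vee$, and $\neg$, i.e., under finite intersection, finite union, and complement. Ginsburg--Spanier then supplies the converse passage from Presburger-definable back to semilinear, closing the loop.

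For readers who prefer a more hands-on proof, I would give the intersection of two linear sets $\cA = \vec a + \sum_{i=1}^l \vec b_i \bN$ and $\cA' = \vec a' + \sum_{j=1}^{l'} \vec b_j' \bN$ directly. A point lies in $\cA \cap \cA'$ iff the linear Diophantine system $\sum n_i \vec b_i - \sum m_j \vec b_j' = \vec a' - \vec a$ has a solution $(\vec n, \vec m) \in \bN^{l+l'}$. By Gordan's lemma the set of nonnegative integer solutions of a homogeneous $\bZ$-linear system is a finitely generated submonoid of $\bN^{l+l'}$, and the set of nonnegative integer solutions of the inhomogeneous system is a finite union of cosets of this submonoid; projecting back to $\bN^d$ via $\vec x = \vec a + \sum n_i \vec b_i$ yields a finite union of linear sets. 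Distributing over the general finite unions shows that the intersection of two semilinear sets is semilinear.

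For complements, the Presburger route is the efficient one: once intersection is available, it suffices to show that the complement of a single linear set $\cA$ is semilinear, and then apply De Morgan. One approach is by induction on $l$ (the number of generators of $\cA - \vec a$), slicing $\bN^d$ by residue classes modulo a suitable sublattice of $\bZ \vec b_1 + \cdots + \bZ \vec b_l$ and using the one-dimensional case (where the complement of $a + b\bN$ inside $\bN$ is clearly semilinear) as the base step. The main obstacle of the proof is exactly this complementation step: showing directly that the complement of a linear set is semilinear requires some combinatorial bookkeeping with lattice cosets and the Smith normal form, which is why invoking Ginsburg--Spanier as a black box is the preferred route in a preliminaries section.
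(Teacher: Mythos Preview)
Your proposal is correct. Both your argument and the paper's are one-line reductions to a known characterization of semilinear sets, but you and the paper pick different black boxes. The paper cites Sakarovitch: semilinear subsets of $\bN^d$ coincide with rational subsets (\cite[Proposition II.7.15]{sakarovitch09}), and rational subsets of $\bN^d$ form a boolean algebra (\cite[Theorem II.7.3]{sakarovitch09}). You instead invoke the Ginsburg--Spanier identification of semilinear sets with Presburger-definable sets, after which closure under $\wedge$, $\vee$, $\neg$ is immediate from the syntax of first-order formulas. Your route is arguably the more natural one here, since the paper already cites Ginsburg--Spanier in the sentence immediately preceding the proposition; the paper's route has the advantage of staying within the framework of Sakarovitch's book, which is also used for Proposition~\ref{p:semilinear-disjoint}. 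Your additional hands-on argument for intersections via Gordan's lemma is a genuine extra that neither black-box approach requires, and it gives more constructive content; the sketch for complements by lattice cosets is plausible but, as you yourself note, is exactly the bookkeeping one avoids by citing a reference.
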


\begin{proof}
  By \cite[Proposition II.7.15]{sakarovitch09} a subset of $\bN^d$ is semilinear if and only if it is rational.
  By \cite[Theorem II.7.3]{sakarovitch09}, the rational subsets of $\bN^d$ form a boolean algebra.
\end{proof}

One can show that every semilinear set is a finite union of simple linear sets.
A stronger and much deeper result, that has been shown by Eilenberg and Schützenberger \cite{eilenberg-schuetzenberger69} and independently by Ito \cite{ito69}, is the following.

\begin{prop} \label{p:semilinear-disjoint}
  Every semilinear set is a finite \emph{disjoint} union of simple linear sets.
\end{prop}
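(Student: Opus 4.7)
The plan is to split the argument into two steps: first, show that every linear set is a finite disjoint union of simple linear sets; second, bootstrap to arbitrary semilinear sets using Proposition~\ref{p:semilinear-boolean}.

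For the first step, consider $\cA = \vec a + \vec b_1 \bN + \cdots + \vec b_l \bN$; translating by $-\vec a$ we may assume $\vec a = \vec 0$. Set $s = \dim_\bQ \mathrm{span}\{\vec b_1,\ldots,\vec b_l\}$, and induct on $l - s$. The base case $l = s$ is immediate. When $l > s$, the generators satisfy a nontrivial $\bZ$-linear relation, which we split as $\sum_i \alpha_i \vec b_i = \sum_i \beta_i \vec b_i$ with $\alpha_i, \beta_i \in \bN$ and $\min(\alpha_i, \beta_i) = 0$ for each $i$. Geometrically this reflects a dependence in the rational cone $C := \bQ_{\ge 0}\vec b_1 + \cdots + \bQ_{\ge 0}\vec b_l$. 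I would triangulate $C$ into simplicial subcones, each generated by a linearly independent subfamily of the $\vec b_i$, and then analyze the integer points in each simplicial subcone via a fundamental parallelepiped of the corresponding sublattice, realizing them as a finite disjoint union of translates of the free monoid on that subfamily. A careful inclusion-exclusion along shared faces of the triangulation, combined with the inductive hypothesis applied to lower-dimensional boundary strata, assembles these local pieces into a disjoint simple-linear decomposition of $\cA$.

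For the second step, given a semilinear $\cA = \cA_1 \cup \cdots \cup \cA_n$, the first step rewrites $\cA$ as a (possibly overlapping) finite union of simple linear pieces $\cB_1 \cup \cdots \cup \cB_m$. I would then disjointify via $\cB_1 \sqcup (\cB_2 \setminus \cB_1) \sqcup \cdots \sqcup (\cB_m \setminus \bigcup_{j<m}\cB_j)$: each set difference is semilinear by Proposition~\ref{p:semilinear-boolean}, and re-applying the first step yields the desired refinement. Termination is ensured by bounding the complexity of a difference of two simple linear sets in terms of their generators (for instance, by reducing modulo the relevant lattice of relations).

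The main obstacle is the disjointness in the first step: writing a linear set as a merely overlapping union of simple linear sets is essentially immediate once one triangulates the rational cone, but producing a genuinely disjoint decomposition requires careful bookkeeping of canonical representatives on each simplicial piece and along shared faces. This is the substantive content of the Eilenberg--Schützenberger and Ito theorems, and I expect the combinatorial management of these canonical representatives—rather than the geometric triangulation itself—to be the most delicate part of the argument.
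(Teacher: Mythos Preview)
The paper does not prove this directly; it cites Ito and Eilenberg--Sch\"utzenberger (via Sakarovitch) as a black box. Your proposal attempts a self-contained argument, which is worth assessing on its own terms, and it has two real gaps.

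In Step~1, triangulating the cone $C$ and tiling each simplicial piece by a fundamental parallelepiped decomposes the lattice points $L \cap C$ (with $L = \vec b_1\bZ + \cdots + \vec b_l\bZ$), not the monoid $M = \vec b_1\bN + \cdots + \vec b_l\bN$; these agree only when $M$ is saturated. For $d=1$ with generators $2$ and $3$ your recipe produces the coset $1 + 2\bN$, which is not contained in $2\bN + 3\bN = \{0,2,3,4,\ldots\}$; the correct decomposition $\{0\} \sqcup (2 + \bN)$ uses a period vector not among the original $\vec b_i$, so restricting simplicial pieces to subfamilies of the $\vec b_i$ is too rigid. In Step~2, ``re-applying the first step'' is circular: each difference $\cB_j \setminus \bigcup_{i<j}\cB_i$ is semilinear but not linear, so you must first express it as a union of linear sets and then invoke Step~1, producing new simple linear pieces that may again overlap. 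Your termination claim rests on an unspecified complexity measure, and supplying one that provably decreases is exactly the content of the Ito and Eilenberg--Sch\"utzenberger arguments you are trying to reprove. Since the paper itself is content to cite those results, the simplest repair is to do the same.
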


\begin{proof}
  For the proof of Ito see \cite[Theorem 2]{ito69}.
  Alternatively, one may apply the more general \cite[Unambiguity Theorem]{eilenberg-schuetzenberger69} of Eilenberg and Schützenberger to the monoid $\bN^d$.
  This second proof is also contained in the book of Sakarovitch, one obtains the claim as follows:
  Let $S \subseteq \bN^d$ be semilinear.
  Then $S$ is a rational subset of $\bN^d$  (see \cite[Proposition II.7.5]{sakarovitch09} and the discussion preceeding it).
  By \cite[Theorem II.7.4]{sakarovitch09} or \cite[Unambiguity Theorem]{eilenberg-schuetzenberger69}, every rational subset of $\bN^d$ is unambiguous.
  Again by \cite[Proposition II.7.15]{sakarovitch09}, every unambiguous rational subset of $\bN^d$ is a finite disjoint union of simple linear sets.
\end{proof}

\subsection{Unit equations}

Unit equations play a central role in our proofs.
We recall the fundamental finiteness result.
For number fields it was proved independently by Evertse \cite{evertse84} and van der Poorten and Schlickewei \cite{vdpoorten-schlickewei82}; the extension to arbitrary fields appears in \cite{vdpoorten-schlickewei91}.
We refer to \cite[Chapter 6]{evertse-gyory15} or \cite[Theorem 7.4.1]{bombieri-gubler06} for more details.

Let $G$ be a finitely generated subgroup of the multiplicative subgroup $K^*$ of the field $K$.
It is important here that $\chr K=0$.
Let $m \ge 1$.
A solution $(x_1,\ldots,x_m) \in K^{m}$ to an equation of the form
\begin{equation} \label{eq:sunit}
  X_1 + \cdots + X_m = 0
\end{equation}
is \defit{non-degenerate} if $\sum_{i \in I} x_i \ne 0$ for every $\emptyset \subsetneq I \subsetneq [1,m]$.
In particular, if $m \ge 2$, then all $x_i$ of a non-degenerate solution are nonzero.

\begin{prop}[Evertse; van der Poorten and Schlickewei]
  There exist only finitely many projective points $(x_1: \cdots : x_m)$ with coordinates $x_1$, $\ldots\,$,~$x_m \in G$ such that $(x_1,\ldots,x_m)$ is a non-degenerate solution to the unit equation~\eqref{eq:sunit}.
\end{prop}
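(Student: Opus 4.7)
The plan is to reduce the statement from an arbitrary field of characteristic zero to the classical $S$-unit equation theorem over a number field, and then invoke the Subspace Theorem of Schmidt together with its $p$-adic generalization due to Schlickewei. The reduction is essentially a specialization argument and is algebraic-geometric in flavor; the number-field input is the genuinely deep ingredient.

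Since $G$ is finitely generated, I may replace $K$ by the subfield $K_0 = \bQ(G) \subseteq K$, which is itself finitely generated over $\bQ$. I would then pick a finitely generated $\bZ$-subalgebra $R \subseteq K_0$ with $G \subseteq R^*$ and fraction field $K_0$, fixing moreover a finite generating set of $G$. Because $G$ is finitely generated, the ``bad'' locus of maximal ideals $\mathfrak{m} \subset R$ -- meaning those for which the specialization $\phi_\mathfrak{m} \colon R \to R/\mathfrak{m}$ fails to be injective on $G$ -- is contained in a finite union of proper Zariski-closed subsets of $\operatorname{Spec} R$. Since $R$ is a finitely generated $\bZ$-algebra, closed points have number-field residue, and there are plenty of such points avoiding a given proper Zariski-closed locus. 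I therefore fix such an $\mathfrak{m}$; then $\phi_\mathfrak{m}$ embeds $G$ as a finitely generated subgroup of some number field $L$. Injectivity on $G$ ensures that two distinct projective points in $G_0^m$, which coincide only if they differ by a scalar in $G$, remain distinct after applying $\phi_\mathfrak{m}$ coordinate-wise; non-degeneracy of solutions is preserved for the same reason.

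In the number-field setting, after possibly enlarging $L$, there is a finite set $S$ of places containing all archimedean ones with $\phi_\mathfrak{m}(G) \subseteq \cO_{L,S}^*$. The transformed equation is then an $S$-unit equation, and the Evertse--van der Poorten--Schlickewei theorem asserts the required finiteness. The standard proof proceeds by induction on $m$: after normalizing so that one coordinate equals $1$, one writes $\sum_i x_i = 0$ as a linear identity whose solutions provide ``small'' $S$-adic approximations to $0$, and then applies the quantitative $p$-adic Subspace Theorem to a carefully chosen family of linear forms indexed by the places in $S$. The non-degeneracy hypothesis prevents any nontrivial vanishing subsum, and this is exactly what is needed to keep the induction from collapsing into a lower-dimensional unit equation that would have to be handled separately.

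The hard part is the Subspace Theorem itself, whose proof rests on Minkowski's theory of successive minima together with a sharp refinement of the Thue--Siegel--Roth method, and for which no elementary route is known. The specialization step carries its own minor technical overhead -- simultaneously guaranteeing injectivity on a fixed finite generating set of $G$ and compatibility with projective equivalence -- but these points are routine commutative-algebra manipulations once the underlying Diophantine input is available.
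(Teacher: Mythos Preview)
The paper does not prove this proposition; it is quoted as a known result with references to Evertse, van der Poorten--Schlickewei, and the monographs of Evertse--Gy\H{o}ry and Bombieri--Gubler. So there is no ``paper's own proof'' to compare against, and your sketch should be judged on its own merits.

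Your overall architecture is the standard one: reduce from a finitely generated field of characteristic zero to a number field by specialization, and then invoke the $S$-unit theorem via the Subspace Theorem. That is indeed how the result is proved in the literature. However, there is a genuine gap in your specialization step. You assert that ``non-degeneracy of solutions is preserved for the same reason'' as distinctness of projective points, namely injectivity of $\phi_{\mathfrak m}$ on $G$. This is not correct: for a proper nonempty subset $I\subsetneq[1,m]$, the partial sum $\sum_{i\in I} x_i$ is a nonzero element of $R$, but it need not lie in $G$, so injectivity of $\phi_{\mathfrak m}$ on $G$ says nothing about whether this element is killed by the reduction. A non-degenerate solution over $K_0$ may well become degenerate over $R/\mathfrak m$.

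The usual repair is to use the \emph{quantitative} form of the number-field theorem, which bounds the number of non-degenerate projective solutions by a constant $N=N(m,\operatorname{rank} G)$ depending only on $m$ and the rank of $G$. One then argues by contradiction: if there were $N+1$ distinct non-degenerate projective solutions in $G$, the finitely many nonzero partial sums arising from these $N+1$ solutions (there are at most $(N+1)(2^m-2)$ of them) can all be kept nonzero by a single generic specialization, while injectivity on $G$ keeps the $N+1$ projective points distinct. This yields $N+1$ non-degenerate solutions over a number field, contradicting the uniform bound. Your sketch is salvageable along these lines, but as written the non-degeneracy claim does not follow.
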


It is easily seen that there can be infinitely many degenerate solutions (even when considered as projective points), but one can recursively apply the theorem to subequations. In particular, we will commonly use an argument of the following type.

Let $\Omega$ be some index set and let $g_1$, $\ldots\,$,~$g_m \colon \Omega \to G_0$ be maps such that
\[
  g_1(\omega) + \cdots + g_m(\omega) = 0 \qquad\text{for $\omega \in \Omega$}.
\]
For every partition $\cP = \{I_1,\ldots,I_t\}$ of the set $[1,m]$, let $\Omega_\cP \subseteq \Omega$ consist of all $\omega$ such that for all $j \in [1,t]$, the tuple $(g_i(\omega))_{i \in I_j}$ is a non-degenerate solution of the unit equation $\sum_{i \in I_j} X_i = 0$.
Since every solution of a unit equation can be partitioned into non-degenerate solutions in at least one way, we obtain $\Omega = \bigcup_{\cP} \Omega_\cP$, where the union runs over all partitions of $[1,m]$.

Since there are only finitely many partitions of $[1,m]$, one can often deal with each $\Omega_\cP$ separately, or reduce to one $\Omega_\cP$ having some desirable property by an application of the pigeonhole principle. E.g., if $\Omega$ is infinite, then at least one $\Omega_\cP$ is infinite.
Similarly, if $\Omega = \bN^d$, then not all $\Omega_\cP$ can be contained in semilinear sets of rank at most $d-1$, because $\bN^d$ cannot be covered by finitely many semilinear sets of rank $\le d-1$.

\subsection{Hahn series} \label{ssec:hahn}
We recall that an abelian group $G$ is totally ordered as a group if $G$ is equipped with a total order $\le$ with the property that $a+c\le b+c$ whenever $a,b, c\in G$ are such that $a\le b$.
For the group $H=\mathbb{Q}^d$, we will give $H$ a total ordering that is compatible with the group structure by first picking positive linearly independent real numbers $\epsilon_1$ ,$\ldots\,$,~$\epsilon_d$ and then declaring that $(a_1,\ldots, a_d)\le (b_1,\ldots ,b_d)$ if and only if
\[
  \sum_{i=1}^d a_i \epsilon_i \le \sum_{i=1}^d b_i \epsilon_i.
\]
\begin{lem}\label{lem:order} The following hold for the above order:
  \begin{itemize}
  \item $\mathbb{N}^d$ is a well-ordered subset of $\mathbb{Q}^d$;
  \item if $(a_1,\ldots, a_d)< (b_1,\ldots ,b_d)$ and if $(c_1,\ldots ,c_d)\in \mathbb{Q}^d$, then there is some $N>0$ such that $n(b_1,\ldots ,b_d) > n(a_1,\ldots ,a_d)+(c_1,\ldots ,c_d)$ whenever $n\ge N$.
  \end{itemize}
\end{lem}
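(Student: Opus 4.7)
The plan is to handle the two bullets with two short, essentially independent arguments.

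For the first bullet I would use Dickson's lemma together with the simple observation that the total order $\le$ refines the coordinatewise partial order on $\mathbb{N}^d$: if $a_i \le b_i$ for all $i$, then $\sum a_i \epsilon_i \le \sum b_i \epsilon_i$ because each $\epsilon_i > 0$. Given a nonempty $S \subseteq \mathbb{N}^d$, Dickson's lemma says the set $M$ of coordinatewise-minimal elements of $S$ is finite and every element of $S$ is coordinatewise-above some element of $M$. Hence every element of $S$ is also $\ge$ (in the total order) some element of $M$, and the minimum of $M$ under the total order — which exists because $M$ is finite and (implicit in the setup) the $\epsilon_i$ are $\mathbb{Q}$-linearly independent, so $\le$ really is a total order — is the minimum of $S$. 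This shows every nonempty subset of $\mathbb{N}^d$ has a least element, i.e.\ $\mathbb{N}^d$ is well-ordered.

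For the second bullet, unwind the definition. Set $\delta \coloneqq \sum_{i=1}^d (b_i - a_i)\epsilon_i$ and $\gamma \coloneqq \sum_{i=1}^d c_i \epsilon_i$. The hypothesis $(a_1,\ldots,a_d) < (b_1,\ldots,b_d)$ says $\delta > 0$, while $\gamma$ is a fixed real number. The desired inequality $n(b_1,\ldots,b_d) > n(a_1,\ldots,a_d) + (c_1,\ldots,c_d)$ is by definition just $n\delta > \gamma$, which holds as soon as $n > \gamma/\delta$; so any $N > \gamma/\delta$ works.

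The only subtlety worth flagging is the implicit assumption on the $\epsilon_i$: for $\le$ to be a total (rather than merely partial) order on $\mathbb{Q}^d$, the $\epsilon_i$ must be linearly independent over $\mathbb{Q}$, which is presumably what "linearly independent real numbers" is intended to mean here. With that understood there is no real obstacle — both parts are short, and the first part is the slightly more substantive of the two, relying essentially on Dickson's lemma.
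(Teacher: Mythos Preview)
Your argument is correct. For the second bullet your proof is identical to the paper's. For the first bullet you take a different route: you invoke Dickson's lemma and the observation that the total order refines the coordinatewise partial order, whereas the paper argues directly that below any fixed element $(a_1,\ldots,a_d)\in S$ there are only finitely many elements of $\mathbb{N}^d$, since $\sum b_i\epsilon_i \le u \coloneqq \sum a_i\epsilon_i$ together with $b_j\ge 0$ and $\epsilon_j>0$ forces $b_i\le u/\epsilon_i$ for each $i$. The paper's version is slightly more elementary (no appeal to Dickson), while yours is perhaps more conceptual and would transfer verbatim to any total order refining the product partial order on $\mathbb{N}^d$; either way the argument is only a couple of lines.
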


\begin{proof}
  Let $S$ be a non-empty subset of $\mathbb{N}^d$.
  Pick $(a_1,\ldots ,a_d)\in S$ and let $u \coloneqq \sum_{i=1}^d a_i \epsilon_i$.
  Then if $(b_1,\ldots , b_d)\in S$ is less than $(a_1,\ldots ,a_d)$, we must have $b_i \le u/\epsilon_i$ for $i \in [1,d]$.
  Thus there are only finitely many elements in $S$ that are less than $u$ and so $S$ has a smallest element.
  Next, if $(a_1,\ldots, a_d)< (b_1,\ldots ,b_d)$, then $\theta\coloneqq \sum_{i=1}^d (b_i-a_i)\epsilon_i >0$.
  Then $n\theta > \sum_{i=1}^d c_i \epsilon_i$ for all $n$ sufficiently large.
\end{proof}
The second property is equivalent to $\bQ^d$ with the given order being archimedean.

If $G$ is a totally-ordered abelian group, we can define the ring of \defit{Hahn power series}
\[
  K\llparen {\vec x}^G\rrparen \coloneqq \Big\{ \sum_{g\in G} a_g {\vec x}^g \colon a_g\in K~\textrm{for}~g\in G, ~\{g\colon a_g\neq  0\}~\textrm{is~well-ordered}\Big\}.
\]
Then $K\llparen {\vec x}^G\rrparen$ together with the obvious operations is a ring.
For
\[
  F = \sum_{g\in G} a_g {\vec x}^g \in K\llparen \vx^G \rrparen,
\]
one defines $\supp(F) \coloneqq \{\, \vx^g : g \in G, a_g \ne 0\,\}$ to be the support of $F$.
We define $[\vec x^g]F \coloneqq a_g$.
Then there is a valuation $\val \colon K\llparen x^G \rrparen \to G \cup \{\infty\}$, defined as follows: one sets $\val(F)=g$ where $\vx^g$ is the minimal monomial in the support of $F$ if $F \ne 0$, and $\val(0)=\infty$.

For $F_1 = \sum_{g\in G} a_g {\vec x}^g$ and $F_2 = \sum_{g \in G} b_g \vx^g \in K\llparen \vx^G \rrparen$, the \defit{Hadamard product} is defined as
\[
  F_1 \odot F_2 = \sum_{g \in G} a_g b_g {\vec x}^g.
\]

If $K$ is algebraically closed and $G$ is divisible, then $K\llparen {\vec x}^G\rrparen$ is an algebraically closed field.
In particular, if we use the order given above for $H=\mathbb{Q}^d$, we see that if $K$ is algebraically closed, then $K\llparen {\vec x}^H\rrparen$ is algebraically closed.
After making the identification $x_i\coloneqq {\vec x}^{\vec e_i}$, where $\vec e_i=(0,\ldots ,1,\ldots ,0)$ and where there is a $1$ in the $i$-th coordinate and zeroes in all other coordinates, we have that the formal power series ring $K\llbracket x_1,\ldots ,x_d \rrbracket$ is a subalgebra of $K\llparen x^H\rrparen$.

We will find it convenient to write ${\vec x}^{(a_1,\ldots ,a_d)} =x_1^{a_1}\cdots x_d^{a_d}$ and write $K\llparen x_1^{\mathbb{Q}},\ldots ,x_{d}^{\mathbb{Q}} \rrparen$ for $K\llparen {\vec x}^H\rrparen$.
In the other direction, we will find it convenient to abbreviate the power series ring as $K\llbracket \vec x\rrbracket = K\llbracket x_1,\ldots,x_d\rrbracket$.
These conventions are consistent with usual multi-index notation, and so for $\vec a= (a_1,\ldots,a_d)$, $\vec b = (b_1,\ldots,b_d) \in \bQ^d$,  we write $\vec a + \vec b = (a_1+b_1, \ldots, a_d + b_d)$ and $\vec a \vec b = (a_1b_1, \ldots, a_d b_d)$.
If $\vec a \in \bZ^d$ and $\vlambda=(\lambda_1,\ldots,\lambda_d)$ with $\lambda_i \in K\llparen \vec x^{H} \rrparen$, we also write $\vlambda^{\vec a}= \prod_{i=1}^d \lambda_i^{a_i}$, though we will usually only use this for $\lambda_i \in K^*$ or $\lambda_i$ a monomial.

\medskip
The set of [rational] Bézivin series is not closed under products or differentation. However, it is closed under Hadamard products and it forms a $K[\vx]$-submodule of $K\llbracket \vx \rrbracket$, as the following easy lemma shows.

\begin{lem}  \label{l:multiply}
  Let $F \in K\llbracket \vx \rrbracket$ be a Bézivin series with coefficients in $rG_0$.
  If $P \in K[\vec x]$ is a polynomial with $s$ terms in its support, then there exists a set $B \subseteq K$ of cardinality $rs$, such that
  \[
    [\vx^{\vec n}] FP \in BG_0 = \sum_{b \in B} bG_0\qquad\text{for $\vec n \in \bN^d$.}
  \]
  In particular, the series $PF$ is a Bézivin series with coefficients in $rsG_0'$ for a suitable finitely generated subgroup $G' \le K^*$.
\end{lem}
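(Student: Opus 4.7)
The proof is a direct convolution calculation followed by the Bézivin decomposition applied termwise. First, write $P = \sum_{j=1}^s c_j \vx^{\vec m_j}$ with $c_j \in K^*$ and pairwise distinct exponents $\vec m_j \in \bN^d$, and adopt the convention $f(\vec k) = 0$ for $\vec k \in \bZ^d \setminus \bN^d$. The multiplication rule in $K\llbracket \vx \rrbracket$ then gives
\[
  [\vx^{\vec n}](PF) = \sum_{j=1}^s c_j\, f(\vec n - \vec m_j) \qquad\text{for every $\vec n \in \bN^d$.}
\]

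The next step is to apply the Bézivin hypothesis to each summand on the right: write $f(\vec n - \vec m_j) = g_{j,1}^{(\vec n)} + \cdots + g_{j,r}^{(\vec n)}$ with each $g_{j,i}^{(\vec n)} \in G_0$, which is vacuously satisfied (by taking all $g_{j,i}^{(\vec n)} = 0$) whenever $\vec n - \vec m_j \notin \bN^d$. Substituting yields
\[
  [\vx^{\vec n}](PF) = \sum_{j=1}^s \sum_{i=1}^r c_j\, g_{j,i}^{(\vec n)},
\]
which exhibits the coefficient as a sum of exactly $rs$ terms, each of the form $c_j g$ with $g \in G_0$. To repackage this as a sum $\sum_{b \in B} b g_b$ with $|B| = rs$, I would pick pairwise distinct $\gamma_1, \ldots, \gamma_r \in G$ (after harmlessly enlarging $G$ to a larger finitely generated subgroup of $K^*$ if it has fewer than $r$ elements, which is possible since $\Char K = 0$ and $F$ still satisfies the Bézivin condition with respect to the larger group) chosen generically so that all $rs$ products $c_j \gamma_i$ are pairwise distinct. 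Setting $B = \{\, c_j \gamma_i : j \in [1,s],\, i \in [1,r]\,\}$ and rewriting $c_j g_{j,i}^{(\vec n)} = (c_j \gamma_i)\bigl(\gamma_i^{-1} g_{j,i}^{(\vec n)}\bigr)$, with the second factor in $G_0$, produces the required representation in $\sum_{b \in B} bG_0$.

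For the ``in particular'' part, take $G' \coloneqq \langle G, c_1, \ldots, c_s \rangle \le K^*$, which is still finitely generated. Every term $c_j g_{j,i}^{(\vec n)}$ then lies in $G'_0$, so $[\vx^{\vec n}](PF)$ is a sum of $rs$ elements of $G'_0$ and therefore belongs to $rsG'_0$; hence $PF$ is a Bézivin series of the asserted form. The lemma poses no real conceptual difficulty—it is essentially immediate once the convolution has been written out—and the only mild bookkeeping point is arranging $B$ to have cardinality exactly $rs$, which is handled by introducing the auxiliary scalars $\gamma_i$.
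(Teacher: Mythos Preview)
Your proof is correct. The paper does not actually supply a proof of this lemma; it is stated as ``easy'' and left to the reader, so there is nothing to compare against beyond noting that your convolution-plus-termwise-decomposition argument is exactly the intended one. The only part requiring any care is arranging $\lvert B\rvert = rs$ exactly (rather than at most $rs$), and your device of multiplying by distinct $\gamma_i \in G$ to separate the $c_j$'s into $rs$ distinct products is a clean way to do it; alternatively one could simply pad any smaller $B$ with arbitrary extra elements of $K$, since $0 \in G_0$ allows the corresponding summands to vanish.
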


We shall have need to understand the factorization of polynomials of the form $1-c\vx^{\vec e}$ with $\vec e \in \bN^d$.

\begin{lem} \label{l:special-polys}
  Let $K$ be algebraically closed and $\vec e = (e_1,\ldots,e_d) \in \bZ^d \setminus \{\vec 0\}$.
  In the factorial domain $K[\vx^{\pm 1}]=K[x_1^{\pm 1},\ldots,x_d^{\pm 1}]$, the Laurent polynomial
  \[
    Q = 1 - c \vx^{\vec e} \qquad\text{with $c \in K^*$,}
  \]
  factors into irreducibles as
  \[
    Q = \prod_{j=1}^t (1 - \zeta^j b \vx^{\vec e/t}),
  \]
  where $t=\gcd(e_1,\ldots,e_d)$, $\zeta \in K^*$ is a primitive $t$-th root of unity, and $b^t=c$.
  In particular, the Laurent polynomial $Q$ is irreducible if and only if $\gcd(e_1,\ldots,e_d) = 1$.
\end{lem}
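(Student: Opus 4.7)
The main content of the lemma is the irreducibility claim; granting it, both the displayed factorization and the ``if and only if'' will follow easily. My plan is therefore to prove first the clean reduction
\begin{center}
  ($\star$) \quad $1 - a\vx^{\vec f} \in K[\vx^{\pm 1}]$ is irreducible whenever $a \in K^*$ and $\gcd$ of the entries of $\vec f \in \bZ^d \setminus \{\vec 0\}$ equals $1$,
\end{center}
and then deduce the lemma in a few lines.

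For the reduction to the displayed factorization, pick $b \in K^*$ with $b^t = c$ and a primitive $t$-th root of unity $\zeta \in K^*$; both exist because $K$ is algebraically closed of characteristic zero. Since $t \mid e_i$ for every $i$, the element $y \coloneqq b\vx^{\vec e/t}$ lies in $K[\vx^{\pm 1}]$, and a direct computation in the one-variable polynomial ring $K[y]$ gives
\[
  1 - c\vx^{\vec e} \;=\; 1 - y^t \;=\; \prod_{j=1}^{t}(1 - \zeta^j y) \;=\; \prod_{j=1}^{t}\bigl(1 - (\zeta^j b)\vx^{\vec e/t}\bigr).
\]
The entries of $\vec e/t$ have $\gcd$ equal to $1$, so by ($\star$) each factor is irreducible, giving the stated factorization. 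Moreover, if $t \ge 2$, then $Q$ decomposes as at least two non-unit factors and is reducible; if $t=1$, then $Q = 1 - c\vx^{\vec e}$ itself falls under ($\star$) and is irreducible. This yields the final ``if and only if'' clause.

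To prove ($\star$), I would exploit the natural action of $\GL_d(\bZ)$ on $K[\vx^{\pm 1}]$ by $K$-algebra automorphisms, where a matrix $M = (m_{ij}) \in \GL_d(\bZ)$ acts by $x_i \mapsto \prod_{j=1}^d x_j^{m_{ji}}$; this sends $\vx^{\vec v}$ to $\vx^{M \vec v}$ for all $\vec v \in \bZ^d$, and the inverse is given by $M^{-1} \in \GL_d(\bZ)$. Since $\vec f$ has coprime entries, $\vec f$ is a primitive lattice vector and may be completed to a $\bZ$-basis of $\bZ^d$; equivalently, there is some $M \in \GL_d(\bZ)$ with $M\vec f = \vec e_1$. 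The associated automorphism carries $1 - a\vx^{\vec f}$ to $1 - a x_1$. Irreducibility of $1 - ax_1$ in $K[\vx^{\pm 1}]$ is clear: viewing it as a polynomial in $x_1$ over the UFD $R \coloneqq K[x_2^{\pm 1}, \ldots, x_d^{\pm 1}]$, it is of degree $1$ with coefficients in $R^* = K^* \cdot \{x_2^{\vec m}\cdots x_d^{\vec m}\}$; hence any nontrivial factorization in $R[x_1^{\pm 1}] = K[\vx^{\pm 1}]$ would force one factor to be a unit of $K[\vx^{\pm 1}] = K^* \cdot \vx^{\bZ^d}$, which $1 - ax_1$ is not.

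The only step that requires any care is the characterization of the unit group $K[\vx^{\pm 1}]^{\times} = K^* \cdot \vx^{\bZ^d}$, which I would record as a short standard observation (for instance, via the $\bZ^d$-grading); with this in hand, the $\GL_d(\bZ)$ change of variables argument is straightforward and I expect no serious obstacle.
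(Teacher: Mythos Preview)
Your proof is correct and takes essentially the same approach as the paper: both arguments use the $\GL_d(\bZ)$ action on $K[\vx^{\pm 1}]$ to send the primitive vector $\vec e/t$ to $\vec e_1$, thereby reducing to the univariate Laurent polynomial ring. The only cosmetic difference is the order of operations---the paper applies the automorphism to $Q$ first (obtaining $1 - cx_1^t$) and then factors in $K[x_1]$, whereas you factor $Q$ first and then verify irreducibility of each factor via the automorphism; the underlying content is identical.
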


\begin{proof}
  The proof follows an argument of Ostrowski \cite[Theorem IX]{ostrowski76}.
  Since $(e_1/t, \ldots, e_d/t)$ is a unimodular row, there exists a matrix $A=(a_{i,j})_{i,j \in [1,d]} \in \GL_d(\bZ)$ whose first row is $(e_1/t, \ldots, e_d/t)$.
  This matrix $A$ induces a ring automorphism $\varphi$ of the Laurent polynomial ring $K[\vx^{\pm 1}]$ with $\varphi(x_i)=\prod_{j=1}^d x_j^{a_{i,j}}$.
  Then $\varphi^{-1}(Q) = 1-cx_1^{t}$.
  Since $K[x_1^{\pm 1}]$ is divisor-closed in $K[\vx^{\pm 1}]$, the problem reduces to that of factoring the univariate Laurent polynomial $1-cx_1^{t}$ in $K[x_1^{\pm 1}]$.
  Since $K[x_1^{\pm 1}]$ is obtained from the factorial domain $K[x_1]$ by inverting the prime element $x_1$, and clearly $x_1$ is not a factor of $1-cx_1^t$ in $K[x_1]$, it suffices to consider the factorization of $1-cx_1^t$ in $K[x_1]$.
  But here the result is clear.
\end{proof}

\section{Rational series with polynomial-exponential coefficients}
\label{sec:special}

In this section we consider rational series whose denominator is a product of elements of the form $1-cu$ with $c \in K^*$ and $u \in K[\vx]$ a non-constant monomial.
This will come in handy in the later sections, as it will turn out that rational Bézivin series are always of such a form.
For the class of rational series under investigation here, it is possible to give a fairly explicit structural description of their coefficient sequences, namely they are piecewise polynomial-exponential on simple linear subsets of $\bN^d$.
\begin{defn} \label{d:polyexp}
  \mbox{} Let $f \colon \bN^d \to K$ be a sequence.
  \begin{itemize}
  \item
    The sequence $f$ is \defit{piecewise polynomial-exponential on simple linear subsets of $\bN^d$} if there exists a partition of $\bN^d$ into simple linear sets $\cS_1$, $\ldots\,$,~$\cS_m$ so that for each $i \in [1,m]$, there exist $k_i \in \bN$, polynomials $A_{i,1}$, $\ldots\,$,~$A_{i,k_i} \in K[\vx]$ and $\valpha_{i,1}$, $\ldots\,$,~$\valpha_{i,k_i} \in (K^*)^d$ such that
    \[
      f(\vn) = \sum_{j = 1}^{k_i} A_{i,j}(\vec n) \valpha_{i,j}^\vn \qquad\text{for $\vn \in \cS_i$}.
    \]
  \item The sequence $f$ is \defit{piecewise polynomial on simple linear subsets of $\bN^d$} if one can moreover take $\valpha_{i,j}=(1,\ldots,1)$ for all $i \in [1,m]$ and $j \in [1,k_i]$.
   \item The sequence $f$ is \defit{piecewise exponential on simple linear subsets of $\bN^d$} if one can take the polynomials $A_{i,j}$ to be constant for all $i \in [1,m]$ and $j \in [1,k_i]$.
\end{itemize}
\end{defn}

Note that in the piecewise exponential case, sums of exponentials ($k_i > 1$) are allowed.
There is no restriction on the ranks of the simple linear sets; the rank of $\cS_i$ may be smaller than $d$, and also need not be the same for $\cS_1$, $\ldots\,$,~$\cS_m$.
Each of these three types of representation is trivially preserved under refinements of the partition.
In particular, representations of the above types are not unique.
It is not hard to see that every series $F\in K\llbracket \vec x \rrbracket$, whose coefficient sequence is polynomial-exponential on simple subsets of $\bN^d$, is rational (see Corollary~\ref{cor:pexip-is-rational} below).
We give an easy example to illustrate the definition.

\begin{example}
  Consider the sequence $f \colon \bN^2 \to \bQ$ defined by
  \[
    \begin{split}
      \sum_{m,n =0}^\infty f(m,n)x^my^n &= \frac{1}{1-2xy} + \frac{1}{1-3xy} + \frac{y}{(1-3xy)^2(1-5y)} + \frac{x}{(1-xy)(1-x)} \\
        &= \sum_{k=0}^\infty (2^k+3^k)x^ky^k + \sum_{k,l=0}^\infty (k+1)3^k5^lx^ky^{k+l+1} + \sum_{k,l=0}^\infty x^{k+l+1}y^{l}
    \end{split}
  \]
  Then
  \[
    f(m,n) =
    \begin{cases}
      2^m + 3^m & \text{if $m=n$}, \\
      \frac{1}{5} (m+1)(\frac{3}{5})^m 5^n & \text{if $m < n$,} \\
      1 & \text{if $m >n$}.
    \end{cases}
  \]
  Since
  \[
    \begin{split}
    \{\, (n,n) \in \bN^2 : n \in \bN \,\} &= (1,1)\bN,\\
    \{\, (m,n) \in \bN^2 : m < n \,\} &= (0,1) + (1,1)\bN + (0,1)\bN, \text{ and }\\
    \{\, (m,n) \in \bN^2 : m > n \,\} &= (1,0) + (1,1)\bN + (1,0)\bN,
    \end{split}
  \]
  are simple linear sets, the sequence $f$ is piecewise polynomial-exponential on simple linear subsets of $\bN^2$.
  However, $f$ is neither piecewise polynomial nor piecewise exponential on simple linear subsets of $\bN^2$.
  The coefficient sequence of $\frac{1}{1-2xy} + \frac{1}{1-3xy}$ is piecewise exponential on simple linear subsets of $\bN^2$, and the coefficient sequence of $\frac{x}{(1-xy)(1-x)}$ is piecewise polynomial on simple linear subsets of $\bN^2$.
\end{example}

The following basic properties hold.

\begin{lem} \label{l:pexp-closure}
  Let $\mathcal {PE}$, $\mathcal {P}$, $\mathcal E \subseteq K\llbracket \vx \rrbracket$ denote, respectively, the power series whose coefficient sequences are piecewise polynomial-exponential, \textup{[}polynomial, exponential\textup{]} on simple linear subsets of $\bN^d$.
  \begin{enumerate}
  \item \label{pc:module} Each of $\mathcal {PE}$, $\mathcal{P}$, and $\mathcal E$ is a $K[\vec x]$-submodule of $K\llbracket \vec x\rrbracket$ and is closed under Hadamard products.
  \item \label{pc:product} The sets $\mathcal {PE}$ and $\mathcal{P}$ are also closed under products and partial derivatives. In particular $\mathcal {PE}$ and $\mathcal {P}$ form subalgebras of $K\llbracket \vx \rrbracket$.
  \end{enumerate}
\end{lem}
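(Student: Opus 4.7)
The plan is to treat each closure property by pointwise analysis on simple linear pieces, using Proposition~\ref{p:semilinear-disjoint} at each step to partition the various semilinear sets that arise into simple linear ones. Proposition~\ref{p:semilinear-boolean} will supply semilinearity of intersections, complements, and translates.

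For part~\ref{pc:module}, given $F_1, F_2 \in \mathcal{PE}$ with partitions $\{\cS_i\}$ and $\{\cT_j\}$, I take a common refinement by decomposing each intersection $\cS_i \cap \cT_j$ into simple linear sets. On each common piece both coefficient sequences have polynomial-exponential form, so $K$-linear combinations and Hadamard products stay polynomial-exponential via
\[
\left(\sum_k A_k(\vn)\valpha_k^{\vn}\right)\left(\sum_l B_l(\vn)\vbeta_l^{\vn}\right) = \sum_{k,l} A_k(\vn)B_l(\vn)(\valpha_k\vbeta_l)^{\vn}.
\]
Closure as a $K[\vec x]$-module reduces, by $K$-linearity, to multiplication by a monomial $\vx^{\vec e}$, which shifts the coefficient sequence to $\tilde f(\vn) = f(\vn - \vec e)$ (zero if $\vn \not\ge \vec e$ coordinatewise); on the shifted piece $\cS_i + \vec e$ the expression $A_{i,j}(\vn - \vec e)\valpha_{i,j}^{-\vec e}\valpha_{i,j}^{\vn}$ is polynomial-exponential in $\vn$, while the semilinear complement $\bN^d \setminus (\vec e + \bN^d)$ splits into simple linear pieces on which the sequence vanishes. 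The specializations for $\mathcal P$ (preserving $\valpha_{i,j} = (1,\ldots,1)$) and $\mathcal E$ (preserving constancy of the polynomial factor) follow from the same computations.

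Partial derivatives in part~\ref{pc:product} are directly analogous: $(\partial F/\partial x_i)(\vn) = (n_i+1)f(\vn + \vec e_i)$, so on each simple linear piece of the semilinear set $\{\vn \in \bN^d : \vn + \vec e_i \in \cS\}$ the new sequence $(n_i+1)A(\vn + \vec e_i)\valpha^{\vec e_i}\valpha^{\vn}$ is polynomial-exponential. The polynomial factor remains polynomial, so $\mathcal P$ is closed under derivatives; $\mathcal E$ is not, consistent with the statement.

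The main obstacle will be closure of $\mathcal{PE}$ and $\mathcal P$ under ordinary products. By part~\ref{pc:module}, I reduce to the case $F_i = A_i(\vn)\valpha_i^{\vn}$ on a single simple linear piece $\cS_i = \vec a_i + \sum_k \bN\vec b_{i,k}$. Parametrizing each $\cS_i$ by $\bN^{r_i}$, the Cauchy product $(F_1F_2)(\vn)$ becomes a sum of a polynomial-exponential function of $(\vec s, \vec t) \in \bN^{r_1+r_2}$ over the lattice fiber
\[
\Lambda(\vn) = \left\{(\vec s, \vec t) : \vec a_1 + \vec a_2 + \sum_k s_k\vec b_{1,k} + \sum_l t_l\vec b_{2,l} = \vn\right\}.
\]
The incidence relation $\{(\vn, \vec s, \vec t) : (\vec s, \vec t) \in \Lambda(\vn)\}$ is a semilinear subset of $\bN^{d+r_1+r_2}$; partitioning it into simple linear pieces and projecting to the $\vn$-coordinates provides a semilinear partition of the support of $F_1F_2$. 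On each piece the fiber-sum collapses to a parameterized sum of geometric-type series in the $\valpha_i^{\vec b_{i,k}}$'s weighted by polynomials in the free parameters, which after simplification is polynomial-exponential in $\vn$ on a sufficiently fine simple linear refinement. Matching this parametrization cleanly with a simple linear subdivision of $\bN^d$ is where I expect the real work; for $\mathcal P$ all exponentials are trivial and the argument reduces to an Ehrhart-type claim on piecewise polynomiality of vector-partition counts.
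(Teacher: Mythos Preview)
Your treatment of part~\ref{pc:module} and of partial derivatives in part~\ref{pc:product} is correct and matches the paper's proof: common refinement via Propositions~\ref{p:semilinear-boolean} and~\ref{p:semilinear-disjoint}, then pointwise computation on each simple linear piece.

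For closure under ordinary products, however, the paper does \emph{not} attempt the direct fiber-sum analysis you outline. Instead it forward-references Theorem~\ref{t:polyexp} and Corollary~\ref{c:poly}, which identify $\mathcal{PE}$ (resp.\ $\mathcal{P}$) with the set of rational series $P/\prod_i(1-c_iu_i)$ with $c_i\in K^*$ (resp.\ roots of unity) and $u_i$ monomials. Once that characterization is available, closure under products and derivatives is a one-line computation with rational functions. Theorem~\ref{t:polyexp} is proved using only part~\ref{pc:module} of the present lemma (plus Le\u{\i}nartas's partial fraction decomposition), so there is no circularity.

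Your direct approach has a genuine gap at exactly the step you flag as ``the real work.'' Two issues. First, projecting a disjoint decomposition of the incidence set in $\bN^{d+r_1+r_2}$ to the $\vn$-coordinates does \emph{not} yield a partition: projections of disjoint simple linear pieces can overlap, so you only get a semilinear cover, and further refinement does not by itself control the fibers. Second, and more seriously, even on a single simple linear piece of the incidence set the assertion that the fiber-sum $\sum_{(\vec s,\vec t)\in\Lambda(\vn)} A(\vec s,\vec t)\,\gamma^{(\vec s,\vec t)}$ is piecewise polynomial-exponential in $\vn$ is not established; writing out the generating function of that sum one gets precisely a rational series of the form $P/\prod(1-c_iu_i)$, and showing that such a series has piecewise polynomial-exponential coefficients \emph{is} the nontrivial direction \ref{t-polyexp:denom}$\Rightarrow$\ref{t-polyexp:polyexp} of Theorem~\ref{t:polyexp}. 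So your sketch either presupposes the theorem you would use, or requires an independent parametric summation result of Brion--Vergne/Ehrhart type that you have not supplied. The paper's route via Theorem~\ref{t:polyexp} is both shorter and avoids this hidden recursion.
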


\begin{proof}
  \ref{pc:module}
  Let $F$, $G$ be series that are  [polynomial-exponential, polynomial, exponential] on simple linear subsets.
  It is clear that for every monomial $u$ and every scalar $\lambda \in K$, also $\lambda uF$ is of the same form.
  Thus it suffices to show that the same is true for $F+G$.
  If $\cS$, $\cT \subseteq \bN^d$ are simple linear subsets, then the intersection $\cS \cap \cT$ is semilinear.
  By Proposition~\ref{p:semilinear-disjoint}, the intersection $\cS \cap \cT$ can be represented as a finite disjoint union of simple linear sets.
  Thus, in representations of the coefficient sequences of $F$ and $G$ as in Definition~\ref{d:polyexp}, we may assume that the simple linear sets coincide for the two series, by passing to a common refinement.
  Then the claim about $F+G$ is immediate.
  
  \ref{pc:product}
  This can again be computed on each simple linear set.
  Alternatively, it will follow from Theorem~\ref{t:polyexp} and Corollary~\ref{c:poly} below.
\end{proof}

The set $\mathcal E$ is not closed under products or derivatives, since  $(1-x)^{-1} \in \mathcal E$, but $(1-x)^{-e} \in K\llbracket x\rrbracket$ for $e \ge 2$ is polynomial-exponential but not exponential on simple linear subsets of $\bN$.
We recall some easy facts about the algebraic independence of monomials.
\begin{lem} \label{l:independence}
  Let $\vec e_1$, $\ldots\,$,~$\vec e_n \in \bN^d$, let $c_1$, $\ldots\,$,~$c_n \in K^*$, and let  $m_1$, $\ldots\,$,~$m_n \in \bN_{\ge 1}$.
  The following statements are equivalent.
  \begin{equivenumerate}
  \item\label{li:lin} The vectors $\vec e_1$, $\ldots\,$,~$\vec e_n$ are linearly independent over $\bZ$.
  \item\label{li:freeab} The monomials $\vx^{\vec e_1}$, $\ldots\,$,~$\vx^{\vec e_n}$ generate a free abelian subgroup of the unit group of $K[\vx^{\pm 1}]$.
  \item\label{li:alg} The monomials $\vx^{\vec e_1}$, $\ldots\,$,~$\vx^{\vec e_n}$ are algebraically independent over $K$.
  \item\label{li:algshiftpower} The  polynomials
    \[
      (1 - c_1 \vx^{\vec e_1})^{m_1}, \ldots, (1 - c_n \vx^{\vec e_n})^{m_n}
    \]
    are algebraically independent over $K$.
  \end{equivenumerate}
\end{lem}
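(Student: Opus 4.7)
The plan is to prove the three pairwise equivalences \ref{li:lin}$\iff$\ref{li:freeab}, \ref{li:freeab}$\iff$\ref{li:alg}, and \ref{li:alg}$\iff$\ref{li:algshiftpower}, noting that the parameters $c_i$ and $m_i$ play no role until the last step. The only substantive work is in the final equivalence; the rest is essentially bookkeeping about multiplicative versus additive structure on exponent vectors.

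For \ref{li:lin}$\iff$\ref{li:freeab}, I would consider the group homomorphism $\bZ^n \to K[\vx^{\pm 1}]^*$ sending $(k_1,\ldots,k_n) \mapsto \vx^{k_1 \vec e_1 + \cdots + k_n \vec e_n}$. Its kernel is, by definition, the set of $\bZ$-linear relations among $\vec e_1,\ldots,\vec e_n$, so the image is a free abelian group of rank $n$ precisely when the $\vec e_i$ are $\bZ$-linearly independent. For \ref{li:freeab}$\iff$\ref{li:alg}, the implication \ref{li:alg}$\Rightarrow$\ref{li:freeab} is trivial since a multiplicative relation is a polynomial relation. For the converse, expand any supposed polynomial relation $P(\vx^{\vec e_1},\ldots,\vx^{\vec e_n}) = \sum_{\valpha \in \bN^n} \lambda_\valpha \vx^{\alpha_1 \vec e_1 + \cdots + \alpha_n \vec e_n} = 0$ into the $K$-basis of $K[\vx^{\pm 1}]$ consisting of Laurent monomials; under \ref{li:freeab}, distinct multi-indices $\valpha$ produce distinct exponent vectors $\alpha_1 \vec e_1 + \cdots + \alpha_n \vec e_n$, so the expansion has no cancellation and all $\lambda_\valpha$ must vanish.

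The heart of the lemma is \ref{li:alg}$\iff$\ref{li:algshiftpower}. Setting $y_i \coloneqq \vx^{\vec e_i}$ and $f_i \coloneqq (1 - c_i y_i)^{m_i}$, I would observe that each $y_i$ is a root of the polynomial $(1 - c_i Y)^{m_i} - f_i \in K[f_i][Y]$, which has degree $m_i \ge 1$, so $y_i$ is algebraic over $K(f_1,\ldots,f_n)$. Hence $K(y_1,\ldots,y_n)$ is a finite algebraic extension of $K(f_1,\ldots,f_n)$ (of degree at most $\prod_i m_i$), and the two fields share the same transcendence degree over $K$. Since a family of $n$ elements is algebraically independent over $K$ exactly when the field it generates has transcendence degree $n$, this gives the equivalence in both directions at once.

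I do not anticipate any genuine obstacle: the only care needed is in the final step, where one must remember that the map $K[z_1,\ldots,z_n] \to K[y_1,\ldots,y_n]$, $z_i \mapsto f_i$, cannot be analyzed by simply substituting—one needs the transcendence-degree argument (or equivalently a Jacobian computation, which in characteristic zero works since the diagonal Jacobian $\partial f_i/\partial y_j = -c_i m_i (1 - c_i y_i)^{m_i - 1} \delta_{ij}$ is nonzero). The transcendence-degree route is shorter and avoids invoking separate machinery.
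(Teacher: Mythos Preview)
Your proposal is correct and essentially matches the paper's proof: both reduce \ref{li:lin}$\Leftrightarrow$\ref{li:freeab}$\Leftrightarrow$\ref{li:alg} to the observation that distinct exponent tuples yield distinct monomials precisely when the $\vec e_i$ are $\bZ$-independent, and both handle \ref{li:alg}$\Leftrightarrow$\ref{li:algshiftpower} via the transcendence-degree argument that $K(y_1,\ldots,y_n)$ is algebraic over $K(f_1,\ldots,f_n)$. The only cosmetic difference is that the paper splits the last step into the affine substitution $y_i \mapsto 1-c_iy_i$ followed by taking $m_i$-th powers, whereas you do both at once using the polynomial $(1-c_iY)^{m_i}-f_i$.
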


\begin{proof}
  \ref{li:lin}$\,\Leftrightarrow\,$\ref{li:freeab} Clear.
  
  \ref{li:lin}$\,\Leftrightarrow\,$\ref{li:alg}
  Consider the family $({\vx}^{\vec e_1 c_1 + \cdots + \vec e_n c_n})_{c_1,\ldots,c_n \in \bN}$ of monomials.
  The monomials in this family are pairwise distinct, and hence linearly independent over $K$, if and only if $\vec e_1$, $\ldots\,$,~$\vec e_n$ are linearly independent.
  But the linear independence of $({\vx}^{\vec e_1 c_1 + \cdots + \vec e_n c_n})_{c_1,\ldots,c_n \in \bN}$ is equivalent to the algebraic independence of $\vx^{\vec e_1}, \ldots, \vx^{\vec e_n}$.
  
  \ref{li:alg}$\,\Leftrightarrow\,$\ref{li:algshiftpower}
  If  $0 \ne P \in K[y_1,\ldots,y_n]$, then $P$ vanishes on $(1 - c_1\vx^{\vec e_1}, \ldots, 1-c_n\vx^{\vec e_n})$ if and only if $P(1 - c_1 y_1, \ldots, 1 - c_n y_n)$ vanishes on $(\vx^{\vec e_1}, \ldots, \vx^{\vec e_n})$.
  Hence $\vx^{\vec e_1}$, $\ldots\,$,~$\vx^{\vec e_n}$ are algebraically independent if and only if the polynomials $1 - c_1 \vx^{\vec e_1}$, $\ldots\,$,~$1 - c_n \vx^{\vec e_n}$ are algebraically independent.

  Now, for any choice of polynomials $f_1$, $\ldots\,$,~$f_n$, the field $K(f_1,\ldots,f_n)$ is an algebraic extension of $K(f_1^{m_1},\ldots,f_n^{m_n})$, and therefore the two fields have the same transcendence degree over $K$.
  Thus $1 - c_1 \vx^{\vec e_1}$, $\ldots\,$,~$1 - c_n \vx^{\vec e_n}$ is algebraically independent if and only if $(1 - c_1 \vx^{\vec e_1})^{m_1}$, $\ldots\,$,~$(1 - c_n \vx^{\vec e_n})^{m_n}$ is algebraically independent.
\end{proof}

Looking once more at Definition~\ref{d:polyexp}, in a sequence with piecewise polynomial-exponential coefficients on simple linear sets, for each simple linear set $\cS_i$, we have $f(\vec n) = \sum_{j = 1}^{k_i} A_{i,j}(\vec n) \valpha_{i,j}^\vn$ for $\vec n \in \cS_i$.
We can represent $\cS_i$ as $\cS_i = \vec a + \vec b_1 \bN + \cdots + \vec b_s \bN$ with suitable $\vec a$, $\vec b_1$, $\ldots\,$,~$\vec b_s \in \bN^d$, where $\vec b_1$, $\ldots\,$,~$\vec b_s$ are linearly independent.
On $\cS_i$ we can therefore also consider representations of the form $f(\vec n) = \sum_{j=1}^l B_{i,j}(\vec m) \vec\beta_{i,j}^{\vec m}$ where $\vec m=(m_1,\ldots,m_s)$ with $\vec n = \vec a + m_1 \vec b_1 + \cdots + m_s \vec b_s$, and $B_{i,1}$, $\ldots\,$,~$B_{i,l} \in K[y_1,\ldots,y_s]$, $\vec\beta_{i,1}$,~$\ldots\,$,~$\vec \beta_{i,l} \in (K^*)^s$.
One easily sees how the two notions relate.

\begin{lem} \label{l:equiv-pexp}
  Let $F = \sum_{\vn \in \bN^d} f(\vn) \vx^{\vn} \in K\llbracket \vx \rrbracket$ and let $\cS = \vec a + \vec b_1 \bN + \cdots + \vec b_s \bN$ be simple linear.
  Consider the following statements.
  \begin{equivenumerate}
  \item \label{equiv-pexp:full} There exist polynomials $A_1$, $\ldots\,$,~$A_l \in K[\vec x]$ and $\valpha_1$,~$\ldots\,$,~$\valpha_l \in (K^*)^d$ such that
    \[
      f(\vec n) = \sum_{j=1}^l A_j(\vec n) \valpha_j^{\vec n} \qquad\text{for $\vn \in \cS$}.
    \]
   \item \label{equiv-pexp:subset} There exist polynomials $B_1$, $\ldots\,$,~$B_l \in K[y_1,\ldots,y_s]$ and $\vbeta_1$,~$\ldots\,$,~$\vbeta_l \in (K^*)^s$ such that
    \[
      f(\vec a + \vec b_1 m_1 + \cdots + \vec b_s m_s) = \sum_{j=1}^l B_j(\vec m) \vbeta_j^{\vec m} \qquad\text{for $\vec m=(m_1,\ldots,m_s) \in \bN^s$}.
    \]
  \end{equivenumerate}
  Then \ref{equiv-pexp:full}$\,\Rightarrow\,$\ref{equiv-pexp:subset}.
  If $K$ is algebraically closed, then also \ref{equiv-pexp:subset}$\,\Rightarrow\,$\ref{equiv-pexp:full}.
\end{lem}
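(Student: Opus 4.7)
The forward direction \ref{equiv-pexp:full}$\,\Rightarrow\,$\ref{equiv-pexp:subset} is essentially bookkeeping and needs no hypothesis on $K$. I would substitute $\vec n = \vec a + m_1 \vec b_1 + \cdots + m_s \vec b_s$ into each summand $A_j(\vec n)\valpha_j^{\vec n}$. The exponential factor splits as $\valpha_j^{\vec a}\prod_{i=1}^s(\valpha_j^{\vec b_i})^{m_i}$, so setting $\vbeta_j \coloneqq (\valpha_j^{\vec b_1}, \ldots, \valpha_j^{\vec b_s}) \in (K^*)^s$ produces $\valpha_j^{\vec a}\vbeta_j^{\vec m}$. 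The polynomial $A_j$ evaluated on the affine substitution is a polynomial in $m_1, \ldots, m_s$ with coefficients in $K$; absorbing the scalar $\valpha_j^{\vec a}$ into it yields the required $B_j \in K[y_1,\ldots,y_s]$.

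For the converse \ref{equiv-pexp:subset}$\,\Rightarrow\,$\ref{equiv-pexp:full}, the plan is to build $\valpha_j$ and $A_j$ separately from $\vbeta_j$ and $B_j$. The polynomial part works in characteristic zero without any further assumption on $K$: since $\vec b_1, \ldots, \vec b_s$ are $\bZ$-linearly independent, they are $\bQ$-linearly independent, so extending them to a $\bQ$-basis of $\bQ^d$ gives a dual basis $\ell_1, \ldots, \ell_s \in \mathrm{Hom}_{\bQ}(\bQ^d,\bQ)$ with $\ell_i(\vec b_j) = \delta_{ij}$. Since $\bQ \subseteq K$, the $\ell_i$ define $K$-linear forms on $K^d$ and $m_i = \ell_i(\vec n - \vec a)$ holds for every $\vec n \in \cS$. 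Hence $B_j\bigl(\ell_1(\vec x - \vec a), \ldots, \ell_s(\vec x - \vec a)\bigr) \in K[\vec x]$ is a polynomial evaluating to $B_j(\vec m)$ on $\cS$.

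The exponential part is where algebraic closure enters. I need $\valpha_j \in (K^*)^d$ satisfying $\valpha_j^{\vec b_i} = \beta_{j,i}$ for all $i \in [1,s]$. The assignment $\vec b_i \mapsto \beta_{j,i}$ extends uniquely to a group homomorphism $\chi_j \colon H \to K^*$ on the subgroup $H \coloneqq \vec b_1\bZ + \cdots + \vec b_s \bZ \le \bZ^d$. The finitely generated abelian quotient $\bZ^d/H$ generally has torsion, so extending $\chi_j$ to all of $\bZ^d$ requires $K^*$ to be divisible; this is exactly what algebraic closure buys (divisible abelian groups are injective $\bZ$-modules). Setting $\valpha_j \coloneqq (\chi_j(\vec e_1), \ldots, \chi_j(\vec e_d))$ and then absorbing $\valpha_j^{\vec a}$ into $A_j$ as in the forward direction completes the desired representation.

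The only real obstacle is this character extension step, and it simultaneously forces the hypothesis on $K$ and illustrates its necessity: for instance, in $d=s=1$ with $\vec b_1 = 2$ and $\beta_1 \in K^* \setminus (K^*)^2$, no $\alpha \in K^*$ satisfies $\alpha^2 = \beta_1$, so the implication would fail without algebraic closure.
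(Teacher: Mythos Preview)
Your argument is correct and follows the same underlying approach as the paper: express the $m_i$ as $\bQ$-linear functions of $\vec n$, and produce the $\valpha_j$ by taking roots of the $\beta_{j,i}$. The only difference is packaging: the paper writes out an explicit common denominator $N$ and chooses $N$-th roots $\gamma_{\nu,i}$ of $\beta_{\nu,i}$ to build $\alpha_{\nu,j}$ by hand, whereas you invoke injectivity of the divisible group $K^*$ to extend the character $\chi_j$ from $H=\vec b_1\bZ+\cdots+\vec b_s\bZ$ to $\bZ^d$ in one stroke---these are the same construction at different levels of abstraction.
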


\begin{proof}
  \ref{equiv-pexp:full}$\,\Rightarrow\,$\ref{equiv-pexp:subset}.
  By straightforward substitution.
  
  \ref{equiv-pexp:subset}$\,\Rightarrow\,$\ref{equiv-pexp:full}. 
  Let $\vec n=(n_1,\ldots,n_d) \in \cS$.
  Since $\vec b_1$, $\ldots\,$,~$\vec b_s$ are linearly independent, there exist unique $m_1$, $\ldots\,$,~$m_s \in \bN^d$ with $\vec n = \vec a + \vec b_1 m_1 + \cdots + \vec b_s m_s$.
  Solving this linear system over $\bQ$ there exists $N \in \bN_{\ge 1}$ and, for every $i \in[1,s]$ and $j \in [1,d]$ integers $p_i$, $q_{i,j}$ such that
  \[
    m_i = p_i/N + \sum_{j=1}^d n_j q_{i,j}/N
  \]
  for all $\vec n=(n_1,\ldots,n_d) \in \cS$ and $\vec m=(m_1,\ldots,m_s) \in \bN^s$ with $\vec n = \vec a + \vec b_1 m_1 + \cdots + \vec b_s m_s$.
  
  Let $\nu \in [1,l]$.
  Suppose $\vbeta_\nu=(\beta_{\nu,1},\ldots,\beta_{\nu,s})$ and pick $\gamma_{\nu,i} \in K^*$ with $\gamma_{\nu,i}^N = \beta_{\nu,i}$ for $i \in [1,s]$.
  Set $\valpha_\nu=(\alpha_{\nu,1}, \ldots\, \alpha_{\nu,d})$ with
  \[
    \alpha_{\nu,j} \coloneqq \prod_{i=1}^s \gamma_{\nu,i}^{q_{i,j}} \qquad \text{for $j \in [1,d]$}
  \]
  and
  \[
    A_\nu(x_1,\ldots,x_d) \coloneqq B_\nu\Big(p_1/N + \sum_{j=1}^d x_j q_{1,j}/N,\, \ldots,\, p_s/N + \sum_{j=1}^d x_jq_{s,j}/N \Big) \prod_{i=1}^s \gamma_{\nu,i}^{p_i}.
  \]

  If $\vec n = \vec a + \vec b_1 m_1 + \cdots + \vec b_s m_s$, then
  \[
    \begin{split}
      A_\nu(\vec n) \valpha_\nu^{\vec n}
      &= B_\nu(\vec m) \prod_{i=1}^s \gamma_{\nu,i}^{ p_i} \prod_{j=1}^d \alpha_{\nu,j}^{n_j}
      = B_\nu(\vec m) \prod_{i=1}^s \gamma_{\nu,i}^{p_i} \cdot \prod_{j=1}^d \prod_{i=1}^s \gamma_{\nu,i}^{q_{i,j}n_j} \\
      & = B_\nu(\vec m) \prod_{i=1}^s \Big( \gamma_{\nu,i}^{ p_i} \prod_{j=1}^d \gamma_{\nu,i}^{q_{i,j}n_j} \Big)
      = B_\nu(\vec m) \prod_{i=1}^s \gamma_{\nu,i}^{m_i N} \\
      & = B_\nu(\vec m) \prod_{i=1}^s \beta_{\nu,i}^{m_i} = B_\nu(\vec m) \vec\beta_\nu^{\vec m}. \qedhere
    \end{split}
  \]
\end{proof}

The algebraic closure of $K$ was only used to guarantee the existence of $N$-th roots of the $\beta_{\nu,i}$ in the proof of \ref{equiv-pexp:subset}$\,\Rightarrow\,$\ref{equiv-pexp:full}.
Hence, the condition can clearly be weakened to the existence of suitable roots.
For instance, if $K=\mathbb R$ and all $\beta_{\nu,j}$ are positive, the implication \ref{equiv-pexp:subset}$\,\Rightarrow\,$\ref{equiv-pexp:full} holds.
However, the condition cannot be entirely removed, as the following example shows.

\begin{example} \label{exm:algclos}
  Let 
  \[
    F = \frac{1}{1-2x^2} = \sum_{n=0}^\infty 2^n x^{2n} \in \mathbb Q\llbracket x \rrbracket.
  \]
  Then $f(1+2m) = 0$ and $f(2m)=2^m$ for all $m \in \bN$.
  On $2\bN$ we thus have a representation of the form \ref{equiv-pexp:subset}.

  However, suppose that for all $n \in 2\bN$, there is a representation $\sqrt{2}^n = \sum_{i=1}^l A_i(n) \alpha_i^n$ with polynomials $A_1$, $\ldots\,$,~$A_l \in \overline\bQ[x]$ and pairwise distinct $\alpha_1$, $\ldots\,$,~$\alpha_l \in \overline\bQ$.
  Then $2^m = \sum_{i=1}^l A_i(2m) \alpha_i^{2m}$ for all $m \in \bN$.
  Since such a representation by an exponential polynomial is unique (see, e.g., \cite[Corollary 6.2.2]{berstel-reutenauer11}), we must have $\alpha_i^2=2$, hence without restriction $l=2$ and $\alpha_1=\sqrt{2}$, $\alpha_2=-\sqrt{2}$.
  Therefore $\alpha_i \not\in \bQ$.
  Of course, over $\bQ(\sqrt{2})$ one has $f(n) = \tfrac{1}{2} \sqrt{2}^n + \tfrac{1}{2} (-\sqrt{2})^n$ for all $n \in \bN$.
\end{example}

In a representation as in \ref{equiv-pexp:full} of Lemma~\ref{l:equiv-pexp}, it is easily seen that the polynomials are not uniquely determined: for instance, to represent the coefficient sequence $f$ of $\frac{1}{1-xy}$ on $\cS=(1,1)\bN$ as $f(n,n)=A(n,n)$, we can take any polynomial $A \in 1 + (x-y)K[x,y]$.
However, we now show that the situation is better in a representation as in \ref{equiv-pexp:subset}.
The proof is essentially the same as the standard one in the univariate case (see, e.g., \cite[Chapter 6.2]{berstel-reutenauer11}).

\begin{prop} \label{p:pexp-unique}
  Let $F = \sum_{\vn \in \bN^d} f(\vn) \vx^{\vn} \in K\llbracket \vx \rrbracket$ and let $\cS = \vec a + \vec b_1 \bN + \cdots + \vec b_s \bN$ be a simple linear set.
  Let $B_1$, $\ldots\,$,~$B_l \in K[y_1,\ldots,y_s]$ and $\vbeta_1$,~$\ldots\,$,~$\vbeta_l \in (K^*)^s$ where the vectors $\vbeta_j$ are pairwise distinct such that
  \[
    f(\vec a + \vec b_1 m_1 + \cdots + \vec b_s m_s) = \sum_{j=1}^l B_j(\vec m) \vbeta_j^{\vec m} \qquad\text{for $\vec m=(m_1,\ldots,m_s) \in \bN^s$}.
  \]
  Then the polynomials $B_j$ and the coefficient vectors $\vbeta_j$ are uniquely determined \textup{(}up to order\textup{)}.
\end{prop}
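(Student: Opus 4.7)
\medskip

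The plan is to reduce the uniqueness claim to the following linear independence statement: if $\vec\beta_1, \ldots, \vec\beta_l \in (K^*)^s$ are pairwise distinct and $B_1, \ldots, B_l \in K[y_1,\ldots,y_s]$ satisfy
\[
  \sum_{j=1}^l B_j(\vec m)\, \vec\beta_j^{\vec m} = 0 \qquad \text{for all } \vec m \in \bN^s,
\]
then $B_j = 0$ for all $j \in [1,l]$. Given two representations as in the proposition, I would merge them into a single expression of the above form by grouping terms with equal $\vec\beta_j$ (and subtracting), so that the distinctness hypothesis is preserved; uniqueness of the $B_j$ and $\vec\beta_j$ then follows immediately.

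To prove the linear independence statement, I would induct on $s$. The base case $s=1$ is the classical univariate uniqueness of representations by exponential polynomials, which can be cited from \cite[Corollary 6.2.2]{berstel-reutenauer11} (this is exactly the tool invoked in Example~\ref{exm:algclos}). For the inductive step, fix $\vec m' = (m_2,\ldots,m_s) \in \bN^{s-1}$ and let $\gamma_1, \ldots, \gamma_t \in K^*$ be the distinct values occurring as first coordinates of the $\vec\beta_j$; write $J_i = \{\, j : \beta_{j,1} = \gamma_i\,\}$. Defining
\[
  P_i(m_1) \coloneqq \sum_{j \in J_i} B_j(m_1, \vec m')\, \prod_{k=2}^s \beta_{j,k}^{m_k},
\]
the hypothesis rewrites as $\sum_{i=1}^t P_i(m_1) \gamma_i^{m_1} = 0$ for all $m_1 \in \bN$, with the $\gamma_i$ distinct. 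Since each $P_i$ is a polynomial in $m_1$ (for fixed $\vec m'$), the univariate base case forces $P_i \equiv 0$ for every $i$.

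Now for each fixed $m_1 \in \bN$ and each $i$, the tuples $(\beta_{j,2}, \ldots, \beta_{j,s})$ with $j \in J_i$ are pairwise distinct in $(K^*)^{s-1}$ (since the full $\vec\beta_j$ are distinct but share first coordinate $\gamma_i$), and the polynomials $\vec m' \mapsto B_j(m_1, \vec m')$ satisfy the analogous vanishing relation in $s-1$ variables. The induction hypothesis then gives $B_j(m_1, \vec m') = 0$ for every $\vec m' \in \bN^{s-1}$, $m_1 \in \bN$, and $j \in J_i$, so $B_j = 0$. The main point requiring care is verifying that $P_i$ really is polynomial in $m_1$ (so that the univariate result applies): this is immediate because $B_j(m_1,\vec m')$ is polynomial in $m_1$ and the factor $\prod_{k \ge 2} \beta_{j,k}^{m_k}$ is a scalar for fixed $\vec m'$. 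No algebraic closure assumption is needed since we never extract roots; the argument stays within $K$.
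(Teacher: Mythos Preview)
Your argument is correct. The reduction to the linear-independence statement is standard, and your induction on $s$ goes through cleanly: the key observation that the truncated vectors $(\beta_{j,2},\ldots,\beta_{j,s})$ remain pairwise distinct within each block $J_i$ is exactly what makes the inductive step work.

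The paper takes a different route. Rather than inducting on $s$, it reduces to $\cS=\bN^d$ and then runs a dimension count: it identifies the space $V_e$ of polynomial-exponential expressions $\sum_j C_j(\vx)\vec\gamma_j$ (with $\vec\gamma_j\in\Lambda^d$ and $\deg_{x_i}C_j\le e-1$) inside the group algebra $K[\vx][\langle\Lambda\rangle^d]$, and the space $A_e$ spanned by the rational functions $\prod_i(1-\lambda_ix_i)^{-k_i}$ with $\lambda_i\in\Lambda$, $k_i\in[1,e]$. The coefficient-extraction map $V_e\to A_e$ is surjective by the binomial expansion, and since $\dim V_e\le|\Lambda|^de^d=\dim A_e$ it must be injective, which is the uniqueness. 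This approach is slicker and ties the uniqueness directly to the rational-function picture used immediately afterward in Corollary~\ref{c:poly-exp-repr}; on the other hand it quietly relies on knowing $\dim A_e=|\Lambda|^de^d$, i.e.\ the linear independence of those rational functions, which is itself a (more elementary) independence fact. Your inductive argument is more self-contained and makes the dependence on the univariate case completely explicit, at the cost of being a little longer.
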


\begin{proof}
  It suffices to consider $\cS=\bN^d$.
  Let $\Lambda \subseteq K^*$ be a finite set and $e \in \bN$.
  Let $A_{e} \subseteq K\llbracket \vec x\rrbracket$ be the $K$-vector space of rational series spanned by rational functions of the form
  \[
    \frac{1}{(1-\lambda_1x_1)^{k_1}\cdots (1-\lambda_dx_d)^{k_d}}
  \]
  with $\lambda_1$, $\ldots\,$,~$\lambda_d \in \Lambda$ and $k_1$, $\ldots\,$,~$k_d \in [1,e]$.
  Then $\dim A_e = \card{\Lambda}^d e^d$.
  
  Now, in the group algebra $K[\vec x][ \langle \Lambda \rangle^d]$, let $V_e$ be the $K$-vector space consisting of elements of the form
  \[
    \sum_{j=1}^l C_j(\vec x)\vec\gamma_j,
  \]
  with $\vec\gamma_j \in \Lambda^d$ and polynomials $C_j \in K[\vec x]$ with $\deg_{x_i}(C_j) \le e-1$ for all $i \in [1,d]$ and $j \in [1,l]$.
  Then  $\dim V_{\vec e} \le \card{\Lambda^d}e^d$.

  For $i \in [1,d]$ let $\lambda_i \in \Lambda$ and $k_i \in [1,e]$.
  Since
  \[
    \prod_{i=1}^d (1 - \lambda_i x_i)^{-k_i} = \sum_{n_1,\ldots,n_d=0}^\infty {\binom{k_1+n_1-1}{n_1} } \cdots {\binom{k_d+n_d-1}{n_d} }\lambda_1^{n_1} \cdots \lambda_d^{n_d} x_1^{n_1}\cdots x_d^{n_d},
  \]
  the vector space $V_e$ maps surjectively onto $A_e$.
  Since $\dim V_e \le \dim A_e$, this is a bijection. This implies the claim.
\end{proof}

We also record the following consequence.

\begin{cor} \label{c:poly-exp-repr}
  Let $u_0$, $u_1$, $\ldots\,$,~$u_s \in K[\vec x]$ be monomials with $\vec u\coloneqq (u_1,\ldots,u_s)$ algebraically independent and let $C \subseteq K^*$ be a finite set. For $\vec\alpha=(\alpha_1,\ldots,\alpha_s) \in C^s$ and $\vec e = (e_1,\ldots,e_s) \in \bN_{\ge 1}$, let \[
    Q_{\vec \alpha, \vec e} \coloneqq (1-\alpha_1u_1)^{e_1}\cdots (1-\alpha_su_s)^{e_s} \in K[\vec u].
  \]
  Then, for $F \in K\llbracket \vx \rrbracket$, the following statements are equivalent.
  \begin{enumerate}
  \item
    For each $\vec\alpha \in C^s$ and $\vec e=(e_1,\ldots,e_s) \in \bN_{\ge 1}^s$, there exist $\lambda_{\vec\alpha,\vec e} \in K$, all but finitely many of which are zero, such that
    \[
      F = u_0 \sum_{\vec\alpha \in C^s} \sum_{\vec e \in \bN_{\ge 1}^s} \frac{\lambda_{\vec\alpha,\vec e}}{Q_{\vec\alpha,\vec e}}.
    \]
  \item
    For each $\valpha \in C^s$, there exists a polynomial $A_{\valpha} \in K[\vec u]$, such that
    \[
      F = u_0\sum_{\vec n \in \bN^s} \sum_{\vec \alpha \in C^s} A_{\vec \alpha}(\vec n) \valpha^{\vec n} {\vec u}^{\vn}.
    \]
  \end{enumerate}
  Moreover, the polynomials $A_\valpha$ are uniquely determined by $F$.
  If $\valpha \in C^s$ and 
  \[
    m_i = \max\{\, e_i : \vec e = (e_1,\ldots,e_i, \ldots, e_s) \in \bN^s \text{ with } \lambda_{\vec \alpha,\vec e} \ne 0 \,\}
  \]
  \textup{(}taking $m_i=-\infty$ if the set is empty\textup{)}, then $\deg_{u_i}(A_{\valpha}) = m_i - 1$.
  In particular, $A_{\vec \alpha}$ is constant if and only if $\lambda_{\vec \alpha,\vec e}=0$ whenever $e_i>1$  for some $i
  \in [1,s]$.
\end{cor}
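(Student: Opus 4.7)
The plan is to verify the equivalence by a direct computation with the negative binomial series, and then to derive uniqueness and the degree formula from Proposition~\ref{p:pexp-unique}. Since $u_0$ is a monomial, it suffices to analyze the series $G \in K\llbracket \vx \rrbracket$ satisfying $F = u_0 G$; by Lemma~\ref{l:independence}, the algebraic independence of $u_1$,~$\ldots\,$,~$u_s$ allows us to view $K[u_1, \ldots, u_s]$ as a polynomial ring in $s$ variables. Writing $u_i = \vec x^{\vec c_i}$ with $\vec c_1$,~$\ldots\,$,~$\vec c_s$ linearly independent in $\bN^d$, the series $G$ has support contained in the simple linear set $\cS = \vec c_1 \bN + \cdots + \vec c_s \bN$.

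For the implication \textup{(1)$\Rightarrow$(2)}, I would expand each factor via the negative binomial series and take the product:
\[
  \frac{1}{Q_{\vec\alpha, \vec e}} = \prod_{i=1}^s \sum_{n_i=0}^\infty \binom{e_i + n_i - 1}{n_i} \alpha_i^{n_i} u_i^{n_i} = \sum_{\vec n \in \bN^s} \prod_{i=1}^s \binom{e_i + n_i - 1}{n_i} \valpha^{\vec n} \vec u^{\vec n}.
\]
Summing over $\vec e$ weighted by $\lambda_{\vec\alpha, \vec e}$ yields a representation of type (2) with
\[
  A_{\vec\alpha}(u_1,\ldots,u_s) = \sum_{\vec e \in \bN_{\ge 1}^s} \lambda_{\vec\alpha, \vec e} \prod_{i=1}^s \binom{e_i + u_i - 1}{u_i}.
\]
For the converse, observe that for each $i$ the polynomials $\binom{e_i + u_i - 1}{u_i}$, $e_i \in \bN_{\ge 1}$, have pairwise distinct degrees in $u_i$ and hence form a $K$-basis of $K[u_i]$; consequently their products form a $K$-basis of $K[u_1, \ldots, u_s]$. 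Expanding each $A_{\vec\alpha}$ in this basis and running the above computation backwards recovers the scalars $\lambda_{\vec\alpha, \vec e}$.

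Uniqueness of $A_{\vec\alpha}$ then follows by applying Proposition~\ref{p:pexp-unique} to $G$ on the simple linear set $\cS$: the tuples $\vec\alpha \in C^s$ are pairwise distinct, so the attached polynomials are uniquely determined. The degree formula $\deg_{u_i} A_{\vec\alpha} = m_i - 1$ is then read off the basis expansion: its $u_i$-degree-$(m_i-1)$ part equals
\[
  \tfrac{1}{(m_i-1)!} \sum_{\vec e:\, e_i = m_i} \lambda_{\vec\alpha, \vec e} \prod_{j \ne i} \binom{e_j + u_j - 1}{u_j},
\]
which is nonzero by linear independence of the basis in the remaining $s-1$ variables and the definition of $m_i$. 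In particular $A_{\vec\alpha}$ is constant precisely when $m_i \le 1$ for every $i$, i.e., when $\lambda_{\vec\alpha, \vec e} = 0$ whenever some $e_i > 1$. The only real issue in the argument is careful bookkeeping of degrees; no substantial technical obstacle arises beyond verifying the linear independence of the binomial-coefficient polynomials.
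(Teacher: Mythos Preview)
Your proof is correct and follows essentially the same approach as the paper: the corollary is recorded as an immediate consequence of Proposition~\ref{p:pexp-unique}, whose proof already contains the negative binomial expansion and the bijection between the span of the $1/Q_{\vec\alpha,\vec e}$ and the polynomial-exponential expressions via a dimension count. You unpack this bijection more explicitly by exhibiting the products $\prod_i \binom{y_i+e_i-1}{e_i-1}$ as a $K$-basis of $K[y_1,\ldots,y_s]$, which is equivalent to the dimension argument in the paper. One small notational quibble: you overload $u_i$ as both a monomial in $K[\vec x]$ and the formal variable of $A_{\vec\alpha}$; writing $\binom{y_i+e_i-1}{e_i-1}$ instead of $\binom{e_i+u_i-1}{u_i}$ would avoid the apparent dependence of the lower index on the variable.
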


For the next step we use a multivariate partial fraction decomposition.
The result is due to Leĭnartas \cite{leinartas78}, an easy to read exposition of the (short) proof in English is given by Raichev in \cite{raichev12}.

\begin{prop}[Leĭnartas's decompositon] \label{p:pfd}
  Let $F=P/Q \in K(\vx)$ with $P$,~$Q \in K[\vec x]$ and $Q \ne 0$, and let $Q=Q_1^{e_1} \cdots Q_t^{e_t}$ with $Q_1$, $\ldots\,$,~$Q_t \in K[\vec x]$ irreducible and pairwise non-associated and $e_1$, $\ldots\,$,~$e_t \ge 1$.
  Then
  \[
    F = \sum_{\vec b=(b_1,\ldots,b_t) \in S} \frac{P_{\vec b}}{Q_1^{b_1}\cdots Q_t^{b_t}},
  \]
  for a finite set $S \subseteq \bN^d$ and polynomials $P_{\vec b} \in K[\vx]$, and for every $\vec b =(b_1, \ldots, b_t) \in S$, the polynomials $\{\, Q_i : b_i > 0 \,\}$ appearing in the denominator are algebraically independent and have a common root in $\overline{K}$.
\end{prop}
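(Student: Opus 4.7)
The plan is to follow Leĭnartas's original approach, as presented by Raichev, and establish the decomposition by iteratively applying two reduction steps—one forcing each summand's active denominator factors to have a common zero, the other forcing them to be algebraically independent—until both conditions hold simultaneously. The starting point is the ordinary partial fraction decomposition, which yields an initial representation
\[
  F = \sum_{\vec b \in S_0} \frac{P_{\vec b}}{Q_1^{b_1} \cdots Q_t^{b_t}}
\]
with $P_{\vec b} \in K[\vec x]$ and finite $S_0 \subseteq \bN^t$, but with neither of the two conditions yet guaranteed on each summand.

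The \emph{Nullstellensatz reduction} handles common zeros. Fix a term and let $I = \{i : b_i > 0\}$ be its active index set. If $\{Q_i\}_{i \in I}$ has no common zero in $\overline{K}^d$, Hilbert's Nullstellensatz produces $A_i \in K[\vec x]$ with $\sum_{i \in I} A_i Q_i = 1$; multiplying the numerator by this identity and distributing gives
\[
  \frac{P_{\vec b}}{\prod_{i \in I} Q_i^{b_i}} = \sum_{i \in I} \frac{P_{\vec b} A_i}{Q_i^{b_i - 1}\prod_{j \in I, j \ne i} Q_j^{b_j}},
\]
in which the total denominator exponent $\sum_j b_j$ has strictly decreased. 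Iterated application forces every active set to have a common zero in $\overline{K}^d$.

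The \emph{algebraic dependence reduction} enforces independence. If $\{Q_i\}_{i \in I}$ is algebraically dependent, some $Q_n$ with $n \in I$ is algebraic over $K(Q_j : j \in I \setminus \{n\})$. Choose a nonzero polynomial relation $\tilde R \in K[y_i : i \in I]$ with $\tilde R(Q_i : i \in I) = 0$ of minimal total degree; after canceling any common factor of $y_n$ (valid since $Q_n \ne 0$), the constant coefficient of $\tilde R$ in $y_n$ is a nonzero polynomial $a_0 \in K[y_j : j \in I \setminus \{n\}]$. Clearing denominators then gives $a_0(Q_j : j \ne n) = -Q_n \cdot g(Q_1,\ldots,Q_t)$ for a suitable polynomial $g$, which allows $1/Q_n$ to be rewritten as a rational function whose denominator lies in $K[Q_j : j \in I \setminus \{n\}]$. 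Substituting into the term, re-applying ordinary partial fractions to refactor the resulting denominator into irreducibles in $K[\vec x]$, and regrouping produces a decomposition in which $Q_n$ no longer appears in the affected denominator, at the cost of possibly introducing new irreducible factors drawn from $a_0(Q_j : j \ne n)$.

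The main obstacle is to verify that interleaving the two reductions yields a terminating process. Since the algebraic dependence step may introduce new irreducible denominator factors, naive measures such as total denominator degree need not decrease monotonically; what is required is a well-founded complexity measure that controls both reductions simultaneously. Raichev's exposition supplies such a measure by ordering summands lexicographically according to a suitable invariant of the active denominator set—combining information about its algebraic and geometric complexity—together with total denominator degree. Once neither reduction can be applied to any summand, every term of the resulting decomposition satisfies both the algebraic independence and the common zero conditions of the proposition.
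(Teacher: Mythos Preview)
The paper itself does not prove this proposition; it cites Le\u{\i}nartas and points to Raichev's exposition. Your sketch follows the same two-reduction strategy, and the Nullstellensatz step is correct. The algebraic-dependence step, however, contains a genuine error.

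You isolate the $y_n$-constant term $a_0$ of an annihilating polynomial $\tilde R$, obtain $a_0(Q_j:j\ne n)=-Q_n\cdot g$, and then substitute $1/Q_n=-g/a_0(Q_j:j\ne n)$ and ``refactor the resulting denominator into irreducibles in $K[\vec x]$''. But $a_0(Q_j:j\ne n)$ is a \emph{polynomial} combination of the $Q_j$, not a monomial in them, and its irreducible factors in $K[\vec x]$ need not lie among $Q_1,\dots,Q_t$. Concretely, take $d=2$ with $Q_1=x$, $Q_2=x+y$, $Q_3=x^2+xy+y^2$ (all irreducible over $\bQ$); the relation $y_1^2-y_1y_2+y_2^2-y_3=0$ holds, and with $n=1$ one gets $a_0=y_2^2-y_3$ and $a_0(Q_2,Q_3)=(x+y)^2-(x^2+xy+y^2)=xy$, which has the irreducible factor $y\notin\{Q_1,Q_2,Q_3\}$. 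Your procedure would then place $y$ in a denominator, whereas the proposition requires every denominator to be of the form $Q_1^{b_1}\cdots Q_t^{b_t}$ built from the \emph{original} $Q_i$. The correct reduction, as in Raichev, isolates a single \emph{monomial} $c_{\vec a^*}\vec y^{\,\vec a^*}$ of the annihilating polynomial---not the full $y_n$-free part---and rewrites
\[
1=-\frac{1}{c_{\vec a^*}}\sum_{\vec a\ne\vec a^*} c_{\vec a}\,\prod_{i\in I} Q_i^{\,a_i-a^*_i},
\]
so that after multiplying the offending fraction by this identity, every new denominator is again a monomial in the original $Q_i$; no foreign factors ever appear. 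Your closing paragraph on termination only gestures at an unspecified lexicographic invariant; that would need to be made explicit in any case, but the point is moot here since the reduction you describe has already left the class of admissible denominators.
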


Unlike the univariate case, in this representation, unfortunately the exponents $b_i$ can exceed $e_i$, see Example~\ref{e:pdf-exp} below.

We can now characterize power series whose coefficient sequences are polynomial-exponential on simple linear subsets of $\bN^d$.

\begin{thm} \label{t:polyexp}
  Let $K$ be algebraically closed.
  The following statements are equivalent for a power series $F = \sum_{\vn \in \bN^d} f(\vn) \vx^{\vn} \in K\llbracket \vx \rrbracket$.
  \begin{equivenumerate}
  \item\label{t-polyexp:denom} One has
    \[
      F = \frac{P}{(1-c_1u_1)\cdots (1-c_lu_l)}
    \]
    with $c_1$, $\ldots\,$,~$c_l \in K^*$, with non-constant monomials $u_1$, $\ldots\,$,~$u_l \in K[\vx]$, and $P \in K[\vx]$.
  \item\label{t-polyexp:polyexp}
    The sequence $f$ is piecewise polynomial-exponential on simple linear subsets of $\bN^d$.
  \end{equivenumerate}
\end{thm}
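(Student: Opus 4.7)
The equivalence breaks into two directions; the implication \ref{t-polyexp:polyexp}$\,\Rightarrow\,$\ref{t-polyexp:denom} is a matter of summing explicit geometric-type series, while \ref{t-polyexp:denom}$\,\Rightarrow\,$\ref{t-polyexp:polyexp} requires Leĭnartas's partial-fraction decomposition together with the semilinear-set refinement machinery. I expect the main obstacle to be in the second direction: organizing the finitely many overlapping simple-linear supports produced by the expansion into a single partition of $\bN^d$. For \ref{t-polyexp:polyexp}$\,\Rightarrow\,$\ref{t-polyexp:denom}, let $\cS_i = \vec a_i + \vec b_{i,1}\bN + \cdots + \vec b_{i,s_i}\bN$ be the pieces of the given partition. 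By Lemma~\ref{l:equiv-pexp}\ref{equiv-pexp:full}$\,\Rightarrow\,$\ref{equiv-pexp:subset} one may write $f$ on $\cS_i$ in parametric form, giving
\[
  F_i = \vx^{\vec a_i}\sum_k \sum_{\vec m \in \bN^{s_i}} B_{i,k}(\vec m) \prod_{j=1}^{s_i} \bigl(\beta_{i,k,j}\vx^{\vec b_{i,j}}\bigr)^{m_j}.
\]
Writing each $B_{i,k}$ as a $K$-linear combination of products $\prod_j \binom{e_j+m_j-1}{m_j}$ and using $\sum_m \binom{e+m-1}{m}y^m = (1-y)^{-e}$ term by term, each $F_i$ becomes a rational function whose denominator is a product of factors $(1-\beta_{i,k,j}\vx^{\vec b_{i,j}})^{e}$. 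Summing over $i$ and clearing denominators gives an expression of the required shape.

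For \ref{t-polyexp:denom}$\,\Rightarrow\,$\ref{t-polyexp:polyexp}, I would first apply Lemma~\ref{l:special-polys} to factor each $1-c_i u_i$ into irreducibles of the form $1-c'\vx^{\vec e'}$ in $K[\vx^{\pm 1}]$, and then pass back to $K[\vx]$, so that the denominator of $F$ appears as a product of powers of pairwise non-associated irreducibles. Applying Leĭnartas's decomposition (Proposition~\ref{p:pfd}) then yields
\[
  F = \sum_{\vec b \in S}\frac{P_{\vec b}}{\prod_{i} (1-c_i\vx^{\vec e_i})^{b_i}},
\]
where, in each summand, the factors $1-c_i\vx^{\vec e_i}$ with $b_i>0$ are algebraically independent — equivalently, by Lemma~\ref{l:independence}, the corresponding $\vec e_i$ are $\bZ$-linearly independent. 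Expanding $(1-c_iu_i)^{-b_i}$ as $\sum_{m_i}\binom{b_i+m_i-1}{m_i}c_i^{m_i}u_i^{m_i}$ and multiplying by each monomial $\vx^{\vec a}$ appearing in $P_{\vec b}$ presents every summand of $F$ as a sequence supported on a finite union of simple linear sets $\vec a + \vec e_1\bN + \cdots + \vec e_s\bN$, polynomial-exponential in the parameters $m_1,\ldots,m_s$. Lemma~\ref{l:equiv-pexp}\ref{equiv-pexp:subset}$\,\Rightarrow\,$\ref{equiv-pexp:full}, which crucially uses that $K$ is algebraically closed, then rewrites each such representation in the $\vec n$-form required by Definition~\ref{d:polyexp}.

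Finally, one merges the finitely many simple-linear supports so obtained into a single partition of $\bN^d$. These supports, together with their complements in $\bN^d$, are all semilinear by Proposition~\ref{p:semilinear-boolean}, so the cells of their common Boolean refinement are semilinear and, by Proposition~\ref{p:semilinear-disjoint}, each such cell decomposes as a finite disjoint union of simple linear sets. This yields a partition of $\bN^d$ into simple linear sets on each of which $f$ is the sum of those original contributions whose support contains it; since each contribution is already given in the $\vec n$-form valid throughout its original support, the restriction to a sub-piece is automatic, and $f$ remains polynomial-exponential on the piece. This produces the decomposition required by \ref{t-polyexp:polyexp}.
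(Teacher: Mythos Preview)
Your proposal is correct and follows essentially the same route as the paper. The only cosmetic difference is packaging: where you spell out the binomial expansion of $(1-c_iu_i)^{-b_i}$ and the Boolean-refinement merging step explicitly, the paper cites Corollary~\ref{c:poly-exp-repr} for the former and invokes Lemma~\ref{l:pexp-closure}\ref{pc:module} (the $K[\vx]$-module closure of $\mathcal{PE}$, whose proof is precisely your common-refinement argument) for the latter.
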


\begin{proof}
  \ref{t-polyexp:denom} $\Rightarrow$ \ref{t-polyexp:polyexp}
  Lemma~\ref{l:special-polys} allows us to assume that the polynomials $1-c_{i}u_{i}$ for $i \in [1,l]$ are all irreducible.
  Gathering associated polynomials, and resetting the notation, we may write
  \[
    F = \frac{P}{(1-c_1u_1)^{e_1} \cdots (1-c_lu_l)^{e_l}},
  \]
  with irreducible polynomials $1 - c_{i} u_{i}$ that are pairwise coprime and $e_1$, $\ldots\,$,~$e_l \ge 1$.

  By Proposition~\ref{p:pfd} we may now express
  \[
    F = \sum_{I \subseteq [1,l]} \frac{P_I}{Q_I},
  \]
  with $P_I \in K[\vx]$ and $Q_I = \prod_{i \in I} (1-c_iu_i)^{b_{I,i}}$ where $b_{I,i} \ge 1$ for all $I \subseteq [1,l]$ and $i \in I$, and the different irreducible factors of $Q_I$ are algebraically independent.
  By Lemma~\ref{l:independence}, the monomials $\{\, u_i : i \in I \,\}$ are algebraically independent for $I \subseteq [1,l]$.
  We can therefore apply Corollary~\ref{c:poly-exp-repr} followed by \ref{equiv-pexp:subset}$\,\Rightarrow\,$\ref{equiv-pexp:full} of Lemma~\ref{l:equiv-pexp}, to deduce that every $1/Q_I$ is piecewise polynomial-exponential on simple linear subsets of $\bN^d$.
  By \ref{pc:module} of Lemma~\ref{l:pexp-closure}, the same is true for $F$.

  \ref{t-polyexp:polyexp} $\Rightarrow$ \ref{t-polyexp:denom}
  By assumption, there exists a partition of $\bN^d$ into simple linear sets, such that on each simple linear set $\cS$, the coefficients of $F$ can be represented as in \ref{equiv-pexp:full} of Lemma~\ref{l:equiv-pexp}.
  Applying \ref{equiv-pexp:full}$\,\Rightarrow\,$\ref{equiv-pexp:subset} of Lemma~\ref{l:equiv-pexp}, followed by Corollary~\ref{c:poly-exp-repr}, we see that $F$ is a $K$-linear combination of rational functions of the form
  \[
    \frac{u_0}{(1-c_1u_1)\cdots (1-c_su_s)}
  \]
  with $c_1$, $\ldots\,$,~$c_s \in K^*$ and monomials $u_0$, $u_1$, $\ldots\,$,~$u_s \in K[\vx]$.
\end{proof}

\begin{example} \label{e:pdf-exp}
  The exponents in the multivariate partial fraction decomposition can increase.
  For instance,
  \[
    F = \frac{1}{(1-x)(1-y)(1-xy)} = \frac{x}{(1-x)(1-xy)^2} + \frac{1}{(1-y)(1-xy)^2}.
  \]
  So, despite the fact that the denominator of $F$ has no repeated factors, the coefficient polynomials in a polynomial-exponential decomposition may be non-constant.
  In the example, we get $[x^ny^n]F = (n+1)$ for $n \in \bN$.
\end{example}

The following is a corollary of the trivial direction of Theorem~\ref{t:polyexp} (and hence could have been observed before).

\begin{cor} \label{cor:pexip-is-rational}
  If the coefficient sequence of $F \in K \llbracket \vx \rrbracket$ is piecewise polynomial-exponential on simple linear subsets of $\bN^d$, then $F$ is rational.
\end{cor}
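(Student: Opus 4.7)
The corollary asserts the \ref{t-polyexp:polyexp}$\,\Rightarrow\,$\ref{t-polyexp:denom} implication of Theorem~\ref{t:polyexp}, but without the algebraic closure hypothesis on $K$. The plan is to observe that this particular direction of the proof does not actually use algebraic closure, and so carries over verbatim; alternatively, one can pass to $\overline K$ and descend.

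For the direct route, start from the given partition of $\bN^d$ into simple linear sets $\cS_1$, $\ldots\,$,~$\cS_m$ on each of which $f$ admits a representation as in \ref{equiv-pexp:full} of Lemma~\ref{l:equiv-pexp}. Apply the implication \ref{equiv-pexp:full}$\,\Rightarrow\,$\ref{equiv-pexp:subset} of that lemma (a straightforward substitution, requiring no algebraic closure) to rewrite $f$ on each $\cS_i$ in the form of \ref{equiv-pexp:subset}. Then invoke Corollary~\ref{c:poly-exp-repr}, whose proof rests only on the dimension count of Proposition~\ref{p:pexp-unique} and likewise is closure-free; this expresses the contribution of $F$ coming from each $\cS_i$ (viewed as a power series) as a finite $K$-linear combination of rational functions of the form $u_0/\prod_k (1-c_k u_k)^{e_k} \in K(\vx)$, where $u_0$,~$u_1$, $\ldots\,$,~$u_s$ are monomials in $\vx$ and $c_k \in K^*$. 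Summing over $i \in [1,m]$ exhibits $F$ as an element of $K(\vx)$.

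Alternatively, one may simply regard $F$ as an element of $\overline K \llbracket \vx \rrbracket$; the hypothesized representation of $f$ remains valid over $\overline K$, so Theorem~\ref{t:polyexp} applied over $\overline K$ yields $F \in \overline K(\vx) \cap K\llbracket \vx \rrbracket$. Rationality then descends to $K$ by a standard linear algebra argument: for any fixed degree bound, the existence of $P$,~$Q \in K[\vx]$ with $Q \neq 0$ and $QF = P$ reduces to the solvability of a homogeneous linear system in the coefficients of $P$ and $Q$, with entries in $K$ determined by the (known) coefficients of $F$; such a system has a nonzero solution over $\overline K$ if and only if it has one over $K$.

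There is no real obstacle here: the technical content has already been isolated in Theorem~\ref{t:polyexp} and its supporting lemmas, and the corollary merely points out that the forward implication is closure-free (or, equivalently, that rationality descends).
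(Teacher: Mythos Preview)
Your proposal is correct and matches the paper's approach: the paper simply remarks that the corollary follows from the ``trivial direction'' \ref{t-polyexp:polyexp}$\,\Rightarrow\,$\ref{t-polyexp:denom} of Theorem~\ref{t:polyexp}, and you have correctly unpacked why that direction does not rely on the algebraic closure hypothesis. Your alternative descent argument via $\overline K$ is also fine, though the paper does not spell it out.
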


Looking at the proof of Theorem~\ref{t:polyexp} again, we see that the special case in which every constant in the denominator is a root of unity corresponds to the coefficient sequence being piecewise polynomial on simple linear subsets of $\bN^d$.
Hence we recover the following (well-known) result.

\begin{cor} \label{c:poly}
  Let $K$ be algebraically closed.
  The following statements are equivalent for a power series $F = \sum_{\vn \in \bN^d} f(\vn) \vx^{\vn} \in K\llbracket \vx \rrbracket$.
  \begin{equivenumerate}
  \item\label{t-poly:denom} One has
    \[
      F = \frac{P}{(1-\omega_1u_1)\cdots (1-\omega_lu_l)}
    \]
    with $\omega_1$, $\ldots\,$,~$\omega_l \in K^*$ roots of unity and with non-constant monomials $u_1$, $\ldots\,$,~$u_l \in K[\vx]$, and $P \in K[\vx]$.
  \item\label{t-poly:poly}
    The sequence $f$ is piecewise polynomial on simple linear subsets of $\bN^d$.
  \end{equivenumerate}
\end{cor}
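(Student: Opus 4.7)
The plan is to parallel the proof of Theorem~\ref{t:polyexp}, tracking the additional property that the denominator constants $c_i$ being roots of unity corresponds, in the polynomial-exponential picture, to every exponential base $\valpha_{i,j}$ having entries that are roots of unity; then to absorb such root-of-unity exponentials into the polynomial part by refining the simple linear pieces into residue classes.

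For \ref{t-poly:denom}$\,\Rightarrow\,$\ref{t-poly:poly}, I would rerun the proof of Theorem~\ref{t:polyexp}~\ref{t-polyexp:denom}$\,\Rightarrow\,$\ref{t-polyexp:polyexp} while tracking roots of unity at each stage. Lemma~\ref{l:special-polys} splits a factor $1-\omega u$ with $\omega$ a root of unity as $\prod_{j=1}^t (1 - \zeta^j b \vec x^{\vec e/t})$ with $b^t=\omega$, so $b$ is itself a root of unity and every irreducible factor retains the same shape. After Leĭnartas's decomposition (Proposition~\ref{p:pfd}) and an application of Corollary~\ref{c:poly-exp-repr}, the finite set $C \subseteq K^*$ may therefore be chosen to consist of roots of unity. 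Finally, in the passage from \ref{equiv-pexp:subset} to~\ref{equiv-pexp:full} in Lemma~\ref{l:equiv-pexp}, each constructed $\gamma_{\nu,i}$ is an $N$-th root of $\beta_{\nu,i} \in C$ and hence itself a root of unity; consequently each $\alpha_{\nu,j} = \prod_i \gamma_{\nu,i}^{q_{i,j}}$ is a root of unity. This produces a piecewise polynomial-exponential representation of $f$ in which every entry of every $\valpha_{i,j}$ is a root of unity.

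The only genuinely new step, and the one I expect to carry the main content beyond Theorem~\ref{t:polyexp}, is the subsequent refinement. Fix $N \ge 1$ with $\alpha^N = 1$ for every entry $\alpha$ of every $\valpha_{i,j}$, and replace each simple linear piece $\cS = \vec a + \vec b_1 \bN + \cdots + \vec b_s \bN$ by
\[
  \cS = \bigsqcup_{(k_1,\ldots,k_s) \in [0,N-1]^s} (\vec a + k_1 \vec b_1 + \cdots + k_s \vec b_s) + N\vec b_1 \bN + \cdots + N\vec b_s \bN,
\]
each summand being itself simple linear with linearly independent basis $N\vec b_1,\ldots,N\vec b_s$. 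On a piece with base point $\vec a'$, every $\vec n$ has the form $\vec a' + N(m_1 \vec b_1 + \cdots + m_s \vec b_s)$, so $\valpha_{i,j}^{\vec n} = \valpha_{i,j}^{\vec a'}$ is constant in $\vec n$. Absorbing these constants into the polynomials $A_{i,j}$ yields a piecewise polynomial representation, establishing~\ref{t-poly:poly}.

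For \ref{t-poly:poly}$\,\Rightarrow\,$\ref{t-poly:denom}, I would observe that piecewise polynomial is exactly the case $\valpha_{i,j}=(1,\ldots,1)$ of piecewise polynomial-exponential, and then rerun the proof of Theorem~\ref{t:polyexp}~\ref{t-polyexp:polyexp}$\,\Rightarrow\,$\ref{t-polyexp:denom}. With $\valpha_{i,j} = \vec 1$, the implication \ref{equiv-pexp:full}$\,\Rightarrow\,$\ref{equiv-pexp:subset} of Lemma~\ref{l:equiv-pexp} produces $\vbeta_j = \vec 1$, and Corollary~\ref{c:poly-exp-repr} may be applied with $C=\{1\}$. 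Thus $F$ comes out as a $K$-linear combination of rational functions $u_0/\prod_i(1-u_i)$, giving a denominator of the required form (with $\omega_i=1$, which is trivially a root of unity). Apart from the refinement argument above, everything reduces to bookkeeping of constants through the machinery already established for Theorem~\ref{t:polyexp}.
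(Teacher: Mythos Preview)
Your proposal is correct and follows precisely the approach the paper indicates: the paper's own proof consists of a single sentence preceding the corollary, stating that one should re-examine the proof of Theorem~\ref{t:polyexp} and observe that roots of unity in the denominator correspond to the piecewise polynomial case. You have faithfully unpacked this, including the one detail the paper leaves implicit---the refinement of each simple linear piece into residue classes modulo $N$ so as to render the root-of-unity exponentials constant---and your bookkeeping through Lemmas~\ref{l:special-polys}, \ref{l:equiv-pexp} and Corollary~\ref{c:poly-exp-repr} is accurate.
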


\begin{smallremark}
  Generating series of the form
  \[
    \frac{1}{(1-u_1)\cdots(1-u_l)}
  \]
  with monomials $u_1$, $\ldots\,$,~$u_l$ play a role in several areas of mathematics.
  They count the number of solutions to a Diophantine linear system over the natural numbers \cite{dahmen-micchelli88}, and appear in combinatorics \cite[Chapter 4.6]{stanley12}, representation theory \cite{heckman82}, and commutative algebra (as Hilbert series) \cite{stanley96,miller-sturmfels05}.

  The coefficient sequences of such series are called \defit{vector partition functions}.
  After some earlier work by Blakley \cite{blakley64} and by Dahmen and Micchelli \cite{dahmen-micchelli88}, Sturmfels \cite{sturmfels95} gave a description of their structure in terms of the chamber complex of the matrix whose columns are the exponents of the monomials $u_1$, $\ldots\,$,~$u_l$.
  
  From a structural point of view, the set $\bN^d$ is partitioned into finitely many polyhedral cones with apex at the origin, and on each such cone the vector partition function $f$ is given by a multivariate quasi-polynomial (however, the results in \cite{sturmfels95} are much more specific than this description).
  This type of decomposition is equivalent to $f$ being polynomial on simple linear subsets of $\bN^d$ --- a good overview of the connection with the theory of semi-linear sets is given by D’Alessandro, Intrigila, and Varricchio \cite{dallesandro-intrigala-varricchio12} \footnote{The definition of a simple linear set in the paper contains a typo; only $\vec b_1$, $\ldots\,$,~$\vec b_n$ are supposed to be linearly independent.}, the specific result can be found in \cite[Proposition 2 and Corollary 1]{dallesandro-intrigala-varricchio12}.

  Proofs of this decomposition (e.g. in \cite{sturmfels95}) typically involve results on counting lattice points in polytopes.
  While we only derived the structural form of the decomposition here, we only needed purely algebraic methods.
  Neither the generalization to the polynomial-exponential case nor the use of purely algebraic methods appear to be entirely new: positive real coefficients where also permitted, for instance, by Brion and Vergne \cite{brion-vergne97}.
  A purely algebraic derivation of the polynomial case is given by Fields in \cite{fields00,fields02}, which we have discovered through O'Neill \cite[Theorem 2.10]{oneill17}.
  The result on the partial fraction decomposition is not used by Fields, but similar reductions are used for the denominators.
  The partial fraction decomposition in connection with multivariate rational generating series is used by Mishna in \cite[Chapter 92]{mishna20}.
  However, no structural decomposition result is derived there.
  
  Since we could not locate a reference that derives the structural result for the the polynomial-exponential case over an arbitrary field, we have chosen to include a proof, although we assume this is at least ``essentially known''.
\end{smallremark}

\subsection*{The exponential case}
We now look at the other extremal case, where the coefficients are piecewise exponential on simple linear subsets of $\bN^d$.

\begin{defn} \label{d:skew-geom} Let $F \in K\llbracket \mathbf x \rrbracket$.
  \begin{enumerate}
  \item
    The series $F$ is \defit{skew-geometric} if there exist $c_0 \in K$, $c_1$, $\ldots\,$,~$c_l \in K^*$ and monomials $u_0$, $u_1$, $\ldots\,$, $u_l$ such that $u_1$, $\ldots\,$,~$u_l$ are algebraically independent and
    \[
      F = \frac{c_0 u_0}{(1-c_1u_1)\cdots (1-c_l u_l)}.
    \]
  \item
    The series $F$ is \defit{geometric} if moreover $\{u_1,\ldots,u_l\} \subseteq \{x_1,\ldots,x_d\}$.
  \end{enumerate}
\end{defn}

For a skew-geometric series $F$ as above, the exponents of the monomials in the support of $F$ form a simple linear set.
The coefficient of $u_0 u_1^{e_1} \cdots u_l^{e_l}$ in $F$  is $c_0 c_1^{e_1}\cdots c_l^{e_l}$.
Every skew-geometric series is rational by definition.
Moreover its coefficient series is exponential on simple linear subsets of $\bN^d$.

\begin{defn} \label{d:unambiguous}
  Let $F_1$, $\ldots\,$~$F_n \in K\llbracket \vx \rrbracket$.
  \begin{enumerate}
  \item The sum $F_1 + \cdots + F_n$ is \defit{unambiguous} if the sets $\supp(F_1)$, $\ldots\,$,~$\supp(F_n)$ are pairwise disjoint.
  \item The sum $F_1 + \cdots + F_n$ is \defit{trivially ambiguous} if for all $i$,~$j \in [1,n]$
    \[
      \supp(F_i) \cap \supp(F_j) = \emptyset \qquad\text{or}\qquad \supp(F_i)=\supp(F_j).
    \]
  \end{enumerate}
\end{defn}

\begin{example}
  For a set $\cS \subseteq \bN^d$ let
  \[
    \ind_\cS \coloneqq \sum_{\vn \in \cS} \vx^{\vn} \in K\llbracket\vx\rrbracket.
  \]
  If $\cS = \vec a + \vec b_1 \bN + \cdots \vec b_l \bN$ is a simple linear set, then
  \[
    \ind_{\cS} = \frac{\vx^{\vec a}}{(1-\vx^{\vec b_1}) \cdots (1 - \vx^{\vec b_l})}
  \]
  is skew-geometric. If $\cS$ is semilinear, then it is a finite disjoint union of simple linear sets, and therefore $\ind_\cS$ is an unambiguous sum of skew-geometric series.
\end{example}

\begin{lem} \label{l:skewgeom-decomposition}
  \begin{enumerate}
  \item \label{lsg:restriction} If $F \in K\llbracket \vx \rrbracket$ is skew-geometric, and $\cS \subseteq \bN^d$ is a simple linear set with $\{\, \vx^\vn : \vn \in \cS \,\} \subseteq \supp(F)$, then $F \odot \ind_\cS$ is skew-geometric.
  \item \label{lsg:poly-exp}
    If a series $F \in K \llbracket \vx \rrbracket$ has a coefficient sequence that is piecewise exponential on simple linear subsets of $\bN^d$, then $F$ is a sum of skew-geometric series.
    If $K$ is algebraically closed, the converse holds.
  \item \label{lsg:refinement} Every sum of skew-geometric series can be expressed as a trivially ambiguous sum of skew-geometric series. 
  \end{enumerate}
\end{lem}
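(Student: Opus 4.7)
The plan is to prove part \ref{lsg:restriction} first and then derive parts \ref{lsg:poly-exp} and \ref{lsg:refinement} from it.

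For part \ref{lsg:restriction}, I would write the skew-geometric $F = c_0 \vx^{\vec d_0}/\prod_{i=1}^l(1-c_i\vx^{\vec d_i})$ with $\vec d_1,\ldots,\vec d_l$ linearly independent (by Lemma~\ref{l:independence}), and parametrise $\cS=\vec a+\vec b_1\bN+\cdots+\vec b_s\bN$. The key observation, and the main technical point of the lemma, is that the inclusion $\cS\subseteq\supp(F)$ together with $\bZ$-linear independence of the $\vec d_i$ uniquely expresses $\vec a-\vec d_0=\sum_i p_i\vec d_i$ with $p_i\in\bN$ and each $\vec b_k=\sum_i q_{i,k}\vec d_i$ with $q_{i,k}\in\bZ$; integrality of the $q_{i,k}$ follows by subtracting the expression for $\vec a-\vec d_0$ from that for $\vec a+\vec b_k-\vec d_0$. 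Substituting $\vn=\vec a+\sum_k m_k\vec b_k\in\cS$ into $c_1^{e_1}\cdots c_l^{e_l}$ then factors the coefficient of $\vx^\vn$ as $\tilde c_0\prod_k\tilde c_k^{m_k}$ with all $\tilde c_k\in K^*$ (no root extraction needed), and summing the resulting geometric series produces
\[
F\odot\ind_\cS=\frac{\tilde c_0\vx^{\vec a}}{\prod_{k=1}^s(1-\tilde c_k\vx^{\vec b_k})}.
\]
A second application of Lemma~\ref{l:independence}, now to the linearly independent $\vec b_1,\ldots,\vec b_s$, gives the algebraic independence of $\vx^{\vec b_1},\ldots,\vx^{\vec b_s}$ required for skew-geometry.

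For part \ref{lsg:poly-exp}, the forward direction proceeds piece by piece: on each simple linear set $\cS_i$ of the partition, use Lemma~\ref{l:equiv-pexp}~\ref{equiv-pexp:full}$\Rightarrow$\ref{equiv-pexp:subset} to re-express the coefficients in the local parameter $\vec m$; each term $c\vbeta^{\vec m}$ then contributes a skew-geometric series to $F_i=\sum_{\vn\in\cS_i}f(\vn)\vx^\vn$ by exactly the computation of part \ref{lsg:restriction}, exhibiting $F$ as a finite sum of skew-geometric series. For the converse when $K$ is algebraically closed, each skew-geometric summand already has coefficients that are exponential in the parameter $\vec m$ on its (simple linear) support, and Lemma~\ref{l:equiv-pexp}~\ref{equiv-pexp:subset}$\Rightarrow$\ref{equiv-pexp:full} (where algebraic closure enters) converts this to an exponential in $\vn$; taking a common refinement of the supports into finitely many disjoint simple linear sets via Propositions~\ref{p:semilinear-boolean} and~\ref{p:semilinear-disjoint} then yields the piecewise exponential representation.

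For part \ref{lsg:refinement}, given $F=F_1+\cdots+F_n$ with each $F_j$ skew-geometric of support $\cS_j$, I would invoke Propositions~\ref{p:semilinear-boolean} and~\ref{p:semilinear-disjoint} to partition $\cS_1\cup\cdots\cup\cS_n$ into finitely many disjoint simple linear sets $\cU_1,\ldots,\cU_N$, each either contained in or disjoint from every $\cS_j$. Part~\ref{lsg:restriction} then makes $F_j\odot\ind_{\cU_k}$ skew-geometric whenever $\cU_k\subseteq\cS_j$, so
\[
F=\sum_{j=1}^n\sum_{k:\,\cU_k\subseteq\cS_j}F_j\odot\ind_{\cU_k}
\]
is a sum of skew-geometric series whose supports lie among the $\cU_k$; any two such supports coincide or are disjoint, so the sum is trivially ambiguous. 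The principal obstacle throughout is the integrality argument in part \ref{lsg:restriction}, which is what allows the lemma to be stated over an arbitrary field $K$; once this is in hand, the other two parts follow from semilinear-set refinements combined with the explicit formula of part \ref{lsg:restriction}.
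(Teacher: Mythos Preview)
Your proposal is correct and follows essentially the same approach as the paper. The only minor difference is in part~\ref{lsg:restriction}: you stop at $q_{i,k}\in\bZ$ (which suffices, since the $c_i\in K^*$ are invertible), whereas the paper additionally observes $q_{i,k}\ge 0$ via $\vec a+n\vec b_k\in\supp(F)$ for all $n$; this extra non-negativity is true but not actually needed for the formula, so your streamlined version is fine.
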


\begin{proof}
  \ref{lsg:restriction}
  Suppose
  \[
    F = \frac{c_0 \vx^{\vec a}}{(1-c_1 \vx^{\vec b_1}) \cdots ( 1 - c_l \vx^{\vec b_l})},
  \]
  with $(\vec b_1,\ldots, \vec b_l) \in \bN^d$ linearly independent, $\vec a \in \bN^d$, and $c_0$, $c_1$, $\ldots\,$,~$c_l \in K^*$ (if $c_0=0$ there is nothing to show). Let $\cS_0 = \vec a + \vec b_1 \bN + \cdots + \vec b_l \bN$ and suppose $\cS = \vec p + \vec q_1 \bN + \cdots + \vec q_s \bN$ with $\cS \subseteq \cS_0$.

  Let $\vec p = \vec a + \mu_1 \vec b_1 + \cdots + \mu_l \vec b_l$ with $\mu_1$, $\ldots\,$,~$\mu_l \in \bN$.
  Since $\vec p + \vec q_i \in \cS_0$ for all $i \in [1,s]$, we have $\vec q_i = t_{i,1} \vec b_1 + \cdots + t_{i,l} \vec b_l$ with $t_{i,j} \in \bZ$.
  Since also $\vec p + n \vec q_i \in \cS_0$ for all $n \in \bN$, we must have $t_{i,j} \ge 0$ for all $i \in [1,s]$ and $j \in [1,l]$.
  Now if $\vec n= \vec p + m_1 \vec q_1 + \cdots + m_s \vec q_s \in \cS$, then
  \[
    [\vx^\vn]F = c_0 \prod_{j=1}^l c_j^{\mu_j + \sum_{i=1}^s m_i t_{i,j}} = c_0 \prod_{j=1}^l c_j^{\mu_j} \cdot \prod_{i=1}^s \Big( \prod_{j=1}^l c_j^{t_{i,j}} \Big)^{m_i},
  \]
  and so
  \[
    F \odot 1_\cS = \frac{d_0 \vx^{\vec p}}{(1 - d_1 \vx^{\vec q_1})\cdots (1- d_s \vx^{\vec q_s})},
  \]
  for suitable $d_0$, $d_1$, $\ldots\,$,~$d_s \in K^*$. (The crucial observation in this straightforward proof was $\vec q_i \in \vec b_1 \bN + \cdots + \vec b_l \bN$.)

  \ref{lsg:poly-exp}
  Suppose $F$ has a coefficient sequence that is piecewise exponential on simple linear subsets.
  We apply the direction \ref{equiv-pexp:full}$\,\Rightarrow\,$\ref{equiv-pexp:subset} of Lemma~\ref{l:equiv-pexp}, and observe that since the polynomials $A_j$ are constant, so are the $B_j$.
  Then it is immediate that $F$ is a sum of skew-geometric series.
  For the converse direction, we analogously use \ref{equiv-pexp:subset}$\,\Rightarrow\,$\ref{equiv-pexp:full} of Lemma~\ref{l:equiv-pexp}.

  \ref{lsg:refinement} 
  The semilinear sets form a boolean algebra, and every semilinear set is a finite disjoint union of simple linear sets.
  Given any sum of skew-geometric series, we can use \ref{lsg:restriction} to refine their support in such a way that the sum can be represented as a trivially ambiguous one.
\end{proof}

\section{Rationality of \texorpdfstring{$D$}{D}-finite Bézivin series}

In this section we prove the following theorem.

\begin{thm} \label{thm:rational}
  If $F \in K\llbracket \vx \rrbracket$ is $D$-finite and Bézivin, then $F$ is rational.
\end{thm}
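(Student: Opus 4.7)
The plan is to proceed by induction on the number of variables $d$, with the base case $d = 1$ being B\'ezivin's univariate theorem \cite{bezivin86}, which yields that a univariate $D$-finite B\'ezivin series is rational (and in fact has only simple poles). For the inductive step, the strategy is to extract polynomial recurrences from the $D$-finite structure, apply the unit equation theorem to the B\'ezivin decomposition of the coefficients, and then use the theory of semilinear sets together with Corollary~\ref{cor:pexip-is-rational} to conclude rationality.

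More concretely, by Lipshitz's theorem \cite[Theorem 3.7]{lipshitz89}, since $F$ is $D$-finite, the coefficient sequence $f \colon \bN^d \to K$ satisfies non-trivial polynomial recurrences: there is a finite set $J \subseteq \bZ^d$ and polynomials $P_{\vec j} \in K[\vec n]$, not all zero, such that
\[
  \sum_{\vec j \in J} P_{\vec j}(\vec n)\, f(\vec n + \vec j) = 0
\]
for all $\vec n$ in a cofinite subset of $\bN^d$ (together with analogous recurrences in each coordinate direction, obtained from the PDE in that variable). Writing $f(\vec m) = g_1(\vec m) + \cdots + g_r(\vec m)$ with $g_i(\vec m) \in G_0$, each such recurrence expands into a vanishing $K$-linear combination of elements of $G_0$ whose coefficients are the polynomial values $P_{\vec j}(\vec n)$.

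To apply the unit equation theorem of Evertse and of van der Poorten--Schlickewei, I would stratify $\bN^d$ into semilinear pieces on which the values $P_{\vec j}(\vec n)$ are sufficiently controlled -- for instance, partitioning first according to which subset of the $P_{\vec j}$ vanishes and then according to the leading-term behavior of each surviving $P_{\vec j}$ -- so as to obtain, on each piece, a genuine unit equation with coefficients in a slightly enlarged finitely generated subgroup $G' \le K^*$. By the theorem, on each piece every non-degenerate partition of the sum forces only finitely many projective relations among the $g_i(\vec n + \vec j)$, which in turn impose monomial identities on the $g_i$. Iterating the theorem on the degenerate sub-equations and refining the resulting stratification via Propositions~\ref{p:semilinear-boolean} and~\ref{p:semilinear-disjoint}, one arrives at a finite partition of $\bN^d$ into simple linear sets on each of which $f$ is polynomial-exponential in the appropriate parameters. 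Corollary~\ref{cor:pexip-is-rational} then yields that $F$ is rational.

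The principal technical obstacle is handling the polynomial coefficients $P_{\vec j}(\vec n)$: they do not lie in $G$, so the unit equation theorem does not apply directly, and one must either absorb them into enlarged coefficient groups on each chosen stratum or combine the unit equation analysis with an inductive descent onto semilinear pieces of strictly smaller rank -- where the restricted series is again $D$-finite and B\'ezivin, so that the inductive hypothesis applies to close the argument.
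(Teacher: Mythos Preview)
Your proposal correctly identifies induction on $d$ and the $P$-recursive structure as starting points, and you correctly pinpoint the central difficulty: the polynomial coefficients $P_{\vec j}(\vec n)$ do not lie in any finitely generated group. However, your proposed resolution---stratifying $\bN^d$ so that on each piece the $P_{\vec j}(\vec n)$ become ``sufficiently controlled'' and can be absorbed into an enlarged finitely generated group---cannot work as stated. On any infinite simple linear set the values of a non-constant polynomial generate an infinitely generated subgroup of $K^*$ (for instance, $\{n : n \in \bN\}$ already contains infinitely many primes), so there is no stratum on which the polynomial values can be absorbed into a finitely generated $G'$. Your alternative suggestion, a descent to strata of smaller rank, also does not close the gap: the full-rank piece of $\bN^d$ must still be handled directly, and on it the polynomial coefficients remain unbounded.

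The missing idea is B\'ezivin's trick, which the paper packages as Lemma~\ref{l:geom}. Rather than stratify $\bN^d$, one evaluates the $j$-th coordinate along a sparse sequence $n_j = l^e + b$ where $l \in \bN$ is deliberately chosen \emph{outside} the root-closure $\sqrt{G'}$. Expanding $Q_{j,\vec a}(l^e+b)$ in powers of $l^e$ and applying the unit equation theorem over $\langle G', l\rangle$, the non-torsion choice of $l$ forces each non-degenerate subsum to involve a single power $l^{es}$; summing over $s$ recovers $Q_{j,\vec a}(y+b)$ evaluated at $y=-b$, i.e.\ $Q_{j,\vec a}(0)$. This replaces the polynomial-coefficient recursion by a \emph{constant}-coefficient one valid on the sparse set. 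Combined with Lemma~\ref{l:zero}, one deduces that a suitable polynomial multiple $H=PF$ has coefficients vanishing on $\bN_{\ge m}^d$; the remaining coefficients are organized into sections in fewer variables, and the induction hypothesis finishes. Note that the paper does \emph{not} deduce rationality by first exhibiting a piecewise polynomial-exponential structure; that structural description is obtained only afterward, in Sections~5--6, by analyzing the denominator of the already-established rational function.
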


To do so, we first recall the notion of a $P$-recursive sequence, introduced by Lipshitz \cite{lipshitz89}, and prove a lemma that encapsulates, and extends to a multivariate setting, the crucial part of Bézivin's argument.
This lemma and its consequences will prove useful on several occasions.

Let $f \colon \bN^d \to K$ be a $d$-dimensional sequence.
A \defit{$k$-section} of $f$ is a sequence (of dimension $<d$) obtained by fixing some of the coordinates of $f$ at values $\le k$, where $k \in \bN$.
Formally, a $k$-section is a sequence $g \colon \bN^{J} \to K$ with $J \subsetneq [1,d]$ and $c_i \in [0,k-1]$ for every $i \in [1,d]\setminus J$, such that
\[
  g( (n_j)_{j\in J} ) = f(n_1,\ldots,n_d)  \quad\text{with}\quad n_i=c_i \quad\text{for}\quad i \in [1,d]\setminus J.
\]
 
\begin{defn} \label{def:precursive}
  A sequence $f\colon \bN^d \to K$ is \defit{$P$-recursive of size $k \ge 0$} if the following two conditions are satisfied.
  \begin{enumerate}[label=(\roman*)]
  \item for every $j\in[1,d]$ and every $\vec a \in [0,k]^d$ there exist polynomials $Q_{j,\vec a} \in K[y]$ such that
    \begin{equation}\label{eq:precursive}
      \sum_{\vec a \in [0,k]^d} Q_{j,\vec a}(n_j) f(\vec n - \vec a) = 0 \qquad\text{for all}\qquad \vec n=(n_1,\ldots,n_d) \in \bN_{\ge k}^d,
    \end{equation}
    and for every $j \in [1,d]$ there exists at least one $\vec a \in [0,k]^d$ with $Q_{j,\vec a} \ne 0$.
  \item if $d > 1$, then all $k$-sections of $f(\vec n)$ are $P$-recursive.
  \end{enumerate}
  The sequence $f$ is \defit{$P$-recursive} if it is $P$-recursive of size $k$ for some $k \ge 0$.
\end{defn}

We remark that the sum in the recursion runs over a $d$-dimensional hypercube, but the coefficient polynomials are univariate.

By a theorem of Lipshitz, a power series is $D$-finite if and only if its sequence of coefficients is $P$-recursive \cite[Theorem 3.7]{lipshitz89}.
While this is not completely obvious, it is possible to use these recursions and finitely many initial values to compute arbitrary values of $f$ \cite[\S 3.10]{lipshitz89}.
In particular, if $f$ is $P$-recursive, then there exists a finitely generated subfield $K_0 \subseteq K$ such that $f(\vec n) \in K_0$ for all $\vec n \in \bN^d$.

Every section of a $P$-recursive sequence, that is, a subsequence obtained by fixing some of the arguments, is again $P$-recursive \cite[Theorem 3.8(iv)]{lipshitz89}.

We need the following easy observation.

\begin{lem} \label{l:zero}
  Let $f\colon \bN^d \to K$ be $P$-recursive of size $k$, let $j \in [1,d]$, and let $Q_{j,\vec a} \in K[y]$ be polynomials, not all zero, such that
  \[
    \sum_{\vec a \in [0,k]^d} Q_{j,\vec a}(n_j) f(\vec n - \vec a) = 0 \qquad\text{for all}\qquad \vec n=(n_1,\ldots,n_d) \in \bN_{\ge k}^d.
  \]
  Further, let $c \in \bN$ be sufficiently large so that $Q_{j,\vec a}(c') \ne 0$ whenever $c' \ge c$ and $Q_{j,\vec a} \ne 0$.
  If there exist $l_1$, $\ldots\,$,~$l_d \ge c$ such that $f(\vec n)$ vanishes on
  \[
    \bigcup_{i=1}^d \bN_{\ge c}^{i-1} \times [l_i,l_i+k-1] \times \bN_{\ge c}^{d-i},
  \]
  then $f(\vec n)$ vanishes on $\bN_{\ge l_1} \times \cdots \times \bN_{\ge l_d}$.
\end{lem}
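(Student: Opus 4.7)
The plan is to prove the lemma by a well-founded induction on $\vec m$ in the lexicographic order on $\bN^d$, using the hypothesized recurrence to express $f(\vec m)$ as a combination of values of $f$ at strictly lex-smaller tuples that still lie in the target region $R \coloneqq \bN_{\ge l_1} \times \cdots \times \bN_{\ge l_d}$. The key device will be to apply the recurrence at a carefully shifted index $\vec n = \vec m + \vec a^*$, where $\vec a^*$ is a distinguished ``pivot'' in the support of the recurrence.

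First I would set $A^* \coloneqq \{\, \vec a \in [0,k]^d : Q_{j,\vec a} \not\equiv 0 \,\}$ (which is nonempty by hypothesis) and let $\vec a^*$ be its lex-minimal element. The induction proceeds on $\vec m \in R$ in lex order. If some coordinate $m_i$ lies in $[l_i, l_i+k-1]$, then $m_j \ge l_j \ge c$ for every $j \ne i$, so $\vec m$ belongs to the given wall $\bN_{\ge c}^{i-1} \times [l_i, l_i+k-1] \times \bN_{\ge c}^{d-i}$ and $f(\vec m) = 0$ by hypothesis; this handles the lex-minimum $(l_1,\ldots,l_d)$ together with all other ``boundary'' tuples. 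Otherwise $m_i \ge l_i + k \ge k$ for every $i$, so $\vec m + \vec a^* \in \bN_{\ge k}^d$ and substituting $\vec n = \vec m + \vec a^*$ into the hypothesized recurrence (terms with $\vec a \notin A^*$ contribute zero) gives
\[
  Q_{j,\vec a^*}(m_j + a^*_j)\, f(\vec m) \;=\; -\sum_{\vec a \in A^*,\, \vec a \ne \vec a^*} Q_{j,\vec a}(m_j + a^*_j)\, f(\vec m + \vec a^* - \vec a).
\]
The leading coefficient is nonzero by the defining property of $c$, since $m_j + a^*_j \ge m_j \ge c$ and $Q_{j,\vec a^*} \not\equiv 0$.

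The heart of the argument lies in the behavior of the correction terms. For each $\vec a \in A^* \setminus \{\vec a^*\}$, the lex-minimality of $\vec a^*$ forces $\vec a >_{\text{lex}} \vec a^*$ and hence $\vec m + \vec a^* - \vec a <_{\text{lex}} \vec m$; simultaneously, the componentwise bound $m_i + a^*_i - a_i \ge m_i - k \ge l_i$ places $\vec m + \vec a^* - \vec a$ in $R$. The inductive hypothesis then kills every term on the right, so $f(\vec m) = 0$. The main subtlety I expect is the choice of pivot: without lex-minimality of $\vec a^*$, the correction terms could fall at tuples that are neither lex-smaller nor in any wall, and the induction would fail to close.
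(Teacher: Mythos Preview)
Your proposal is correct and is essentially the same argument as the paper's: both pick the lex-minimal (more generally, well-order-minimal) index $\vec a^*$ with $Q_{j,\vec a^*}\not\equiv 0$, shift the recurrence to $\vec n=\vec m+\vec a^*$, and use minimality to kill the remaining terms. The only cosmetic difference is that the paper phrases it as a minimal-counterexample contradiction while you phrase it as a direct well-founded induction, and you make the boundary verification $m_j\ge l_j\ge c$ explicit whereas the paper leaves it implicit.
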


\begin{proof}
  Fix a well-order on $\bN^d$ for which $(\bN^d,+)$ is an ordered semigroup (for instance, a lexicographical order).
  We proceed by contradiction and assume that $\vec m =(m_1,\ldots,m_d) \in \bN_{\ge l_1} \times \cdots \times \bN_{\ge l_d}$ is minimal with $f(\vec m)\ne 0$.
  Then $m_i \ge l_i+k$ for all $i \in [1,d]$.
  Let $\vec b \in [0,k]^d$ be the minimum, with respect to the well-order, with $Q_{j,\vec b} \ne 0$.
  Taking $\vec n = \vec m + \vec b$, we see that Equation~\eqref{eq:precursive} allows us to express $f(\vec m)$ as a linear combination of certain $f(\vec m')$ with $\vec m' < \vec m$ and $\vec m' \in \bN_{\ge l_1} \times \cdots \times \bN_{\ge l_d}$, showing $f(\vec m)=0$,  a contradiction.
\end{proof}

The following proof is a straightforward, if somewhat technical, adaption of Bézivin's argument \cite{bezivin86} to cover multi-dimensional sequences.
The restriction to a finitely generated field ensures the applicability of the following lemma.
\begin{lem}
  If $K$ is a finitely generated as a field, then
  \[
    \sqrt{G}\coloneqq \{\, x \in K^* : x^n \in G \text{ for some $n \ge 1$\} \,}
  \]
  is a finitely generated group.
\end{lem}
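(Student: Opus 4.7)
The plan is to realize $\sqrt{G}$ as a subgroup of the unit group of a suitable finitely generated $\bZ$-subalgebra of $K$, and then invoke the classical theorem of Roquette (generalizing Dirichlet's unit theorem) that such unit groups are themselves finitely generated.

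First I would choose generators $g_1$,~$\ldots\,$,~$g_r$ of $G$ and, using the hypothesis that $K$ is finitely generated as a field, elements $y_1$,~$\ldots\,$,~$y_s \in K$ with $K = \bQ(y_1,\ldots,y_s)$. Set
\[
  R \coloneqq \bZ[y_1,\ldots,y_s,\, g_1^{\pm 1}, \ldots, g_r^{\pm 1}] \subseteq K.
\]
Then $R$ is a finitely generated $\bZ$-algebra, $\operatorname{Frac}(R) = K$, and by construction $G \le R^*$.

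Next I would pass to the integral closure $\widetilde R$ of $R$ in $K$. Since $R$ is a finitely generated $\bZ$-algebra (hence a Japanese / Nagata ring, e.g.\ by \cite[IV$_2$ \S7]{ega} or the Nagata theorem), $\widetilde R$ is a finite $R$-module, and in particular a finitely generated $\bZ$-algebra. The crucial observation is then that $\sqrt{G} \subseteq \widetilde R^*$: indeed, if $x \in \sqrt{G}$ satisfies $x^n = g \in G$, then the monic polynomial $T^n - g \in R[T]$ vanishes at $x$, so $x \in \widetilde R$; and $x^{-1} = g^{-1} x^{n-1} \in \widetilde R$, whence $x \in \widetilde R^*$.

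Finally, I would invoke Roquette's theorem (see, e.g., \cite{lang-FDG} or \cite{samuel}): for any integral domain $S$ that is finitely generated as a $\bZ$-algebra, the unit group $S^*$ is a finitely generated abelian group. Applied to $S = \widetilde R$, this shows $\widetilde R^*$ is finitely generated, and since every subgroup of a finitely generated abelian group is finitely generated, we conclude that $\sqrt{G}$ is finitely generated.

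The main obstacle is simply the appeal to the two classical inputs from commutative algebra/arithmetic geometry --- finiteness of the normalization of a finitely generated $\bZ$-algebra, and the generalized Dirichlet unit theorem of Roquette. Once these are available, the argument is entirely routine; no analysis of the structure of $K$ beyond the existence of such an $R$ is required, and in particular one does not need to distinguish the number-field case from the case of positive transcendence degree over $\bQ$.
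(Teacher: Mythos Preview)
Your proof is correct and follows essentially the same route as the paper's: build a finitely generated $\bZ$-algebra containing $G$, pass to its integral closure using Nagata's theorem, and apply Roquette's theorem to its unit group. Your version is in fact slightly more careful in ensuring $\operatorname{Frac}(R)=K$ before taking the integral closure, which makes the inclusion $\sqrt{G}\subseteq\widetilde R^{*}$ completely transparent; the paper leaves this implicit.
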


\begin{proof}
  By assumption $G$ is a finitely generated group.
  Therefore there exists a finitely generated $\bZ$-subalgebra $A \subseteq K$ containing $G$.
  By a theorem of Nagata, the integral closure $\overline{A}$ of $A$ is a finitely generated $A$-module (see \cite[Theorem (31.H) in Chapter 12]{matsumura80}).
  Therefore $\overline{A}$ is a finitely generated $\bZ$-algebra as well.
  A theorem of Roquette (see \cite[Corollary 7.5 of Chapter 2]{lang83}) implies that the group of units $(\overline A)^\times$ is finitely generated.
  Since $\sqrt{G} \subseteq (\overline A)^{\times}$, the claim follows.
\end{proof}

A subgroup $G \le K^*$ is \defit{root-closed} (in $K^*$) if $\sqrt{G}=G$.
Note that always $\sqrt{G} \subsetneq K^*$.
We can even find $l \in (\bN_{\ge 1} \setminus \sqrt{G}) \subseteq K^*$, say, by choosing for $l$ a suitable prime number.

\begin{lem} \label{l:geom}
  Suppose that the field $K$ is finitely generated.
  Let $G \le K^*$ be a finitely generated subgroup and $G_0 \coloneqq G \cup \{0\}$.
  
  Let $\Omega$ be a set, let $n \ge 0$, and let $f_1$,~$\ldots\,$,~$f_n \colon \Omega \to G_0$ and $\pi \colon \Omega \to K$ be maps such that there exist polynomials $Q_1$, $\ldots\,$,~$Q_n \in K[y]$ with
  \[
    \sum_{j = 1}^n Q_j(\pi(\omega)) f_j(\omega) = 0 \qquad \text{for all $\omega \in \Omega$}.
  \]
  Then, for all finite subsets $B \subseteq K$, there exists a finitely generated root-closed group $G' \supseteq G$, such that the following holds:
  for all $l \in K^* \setminus G'$, there exists $e_0 \in \bN$ such that
  \begin{equation}\label{eq:bzvn0}
    \sum_{j = 1}^n Q_j(0) f_j(\omega) = 0
  \end{equation}
  for all $\omega \in \Omega$ with $\pi(\omega) \in C$, where 
  \[
    C = \big\{\, l^{e} + b : e \in \bN_{\ge e_0} \text{ and } b \in B \,\big\} \subseteq K,
  \]
\end{lem}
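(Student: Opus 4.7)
The plan is to expand each $Q_j(\pi(\omega)) = Q_j(l^{e}+b)$ via the binomial theorem, reorganize the equation into a sum indexed by powers of $l^e$, and then apply the Evertse--van der Poorten--Schlickewei theorem on unit equations to force the ``constant'' (i.e., $l^0$) contribution to vanish.

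In detail, writing $Q_j(y) = \sum_{k=0}^D q_{j,k} y^k$ with $D = \max_j \deg Q_j$, I would take $G'$ to be the root-closure in $K^*$ of the subgroup generated by $G$, the nonzero coefficients $q_{j,k}$, the nonzero elements of $B$, and the nonzero integers ${\binom{k}{i}}$ for $0 \le i \le k \le D$. By the preceding lemma $G'$ is finitely generated, and it is root-closed by construction. Now fix any $l \in K^* \setminus G'$. For $\omega \in \Omega$ with $\pi(\omega) = l^e + b$, where $e = e(\omega) \ge e_0$ (to be chosen below) and $b = b(\omega) \in B$, the binomial expansion $(l^e+b)^k = \sum_i {\binom{k}{i}} l^{ei} b^{k-i}$ rearranges the hypothesis into
$$0 = \sum_{j=1}^n Q_j(l^e+b) f_j(\omega) = \sum_{i=0}^D l^{ei} \alpha_i(\omega), \quad \alpha_i(\omega) \coloneqq \sum_{j=1}^n \sum_{k\ge i} q_{j,k} {\binom{k}{i}} b^{k-i} f_j(\omega).$$
Note $\alpha_0(\omega) = \sum_j Q_j(0) f_j(\omega)$ is precisely what we wish to annihilate. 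By the choice of $G'$, each summand of $\alpha_i(\omega)$ lies in $G'_0$, and the total number of summands across all $i$ is bounded by some $M$ independent of $\omega$.

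Next, for each $\omega$ I would partition the nonzero terms of $\sum_{i,t} l^{ei} g_{i,t}(\omega) = 0$ into blocks giving non-degenerate unit equation solutions with coordinates in the finitely generated group $\langle l\rangle\cdot G'$. By the cited theorem only finitely many projective non-degenerate solutions occur. If a block contains two indices $(i,t),(i',t')$ with $i\neq i'$, then the ratio $l^{e(i-i')} g_{i,t}(\omega)/g_{i',t'}(\omega)$ takes only finitely many values, confining $l^{e(i-i')}$ to finitely many cosets of $G'$. Since $l\notin G'=\sqrt{G'}$, the image of $l$ in $K^*/G'$ has infinite order, so each coset contains at most one power of $l$; hence $|e|$ is bounded by a constant depending only on the finitely many partition types of $[0,D]\times[1,M]$. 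Choosing $e_0$ larger than every such bound then forces, for every $\omega$ with $e(\omega)\ge e_0$, each non-degenerate block to be \emph{monochromatic} in $i$.

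Finally, monochromatic blocks partition the nonzero terms of each $\alpha_i(\omega)$ into subsets summing to zero, so $\alpha_i(\omega) = 0$ for every $i$, and in particular $\alpha_0(\omega) = \sum_j Q_j(0) f_j(\omega) = 0$, which is~\eqref{eq:bzvn0}. The main technical obstacle is the Evertse step: one must arrange that all relevant ratios lie in a fixed finitely generated group, for which it is essential to define $G'$ as a root-closure, so that $l\notin G'$ ensures $l$ has infinite order modulo $G'$, thus converting finiteness of cosets into boundedness of $e$.
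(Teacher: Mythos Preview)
Your approach is essentially the same as the paper's: expand in powers of $l^e$, apply the unit equation theorem over $\langle l\rangle\cdot G'$, and argue that for large $e$ the non-degenerate blocks must be monochromatic in the exponent $i$. The paper organizes the argument by induction on $\card{B}$, handling one shift $b$ at a time, whereas you absorb the finitely many nonzero elements of $B$ into $G'$ and treat all $b$ simultaneously; both work.

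There is, however, a genuine computational slip in your final step. With your definition,
\[
\alpha_0(\omega)=\sum_{j=1}^n\sum_{k\ge 0} q_{j,k}{\binom{k}{0}}\,b^{k}\,f_j(\omega)=\sum_{j=1}^n Q_j(b)\,f_j(\omega),
\]
which is \emph{not} $\sum_j Q_j(0)f_j(\omega)$ unless $b=0$. So concluding $\alpha_0(\omega)=0$ only gives you $\sum_j Q_j(b(\omega))f_j(\omega)=0$, which is not the claim. The repair is easy and is exactly what the paper does: once you have $\alpha_i(\omega)=0$ for every $i$, observe that
\[
\sum_{i=0}^D y^{i}\alpha_i(\omega)=\sum_{j=1}^n Q_j(y+b)\,f_j(\omega)
\]
identically in $y$ (this is the same binomial expansion run backwards), and then substitute $y=-b(\omega)$ to obtain $\sum_j Q_j(0)f_j(\omega)=0$. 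With this extra line the argument is complete.
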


\begin{proof}
  Let $G'$ be a finitely generated group containing $G$ and all coefficients of the polynomials $Q_j(y+b)$ where $b \in B$.
  We may moreover assume $\sqrt{G'}=G'$.
  Enlarging $G$ if necessary, we even assume $G'=G=\sqrt{G}$ for notational simplicity.

  If $l \in K^*\setminus G$, then in particular $l$ is not a root of unity and thus for every $b \in K$, the set $\{\, l^e + b : e \in \bN \,\}$ is infinite.
  Moreover, if $l$ and $b$ are fixed, then the exponent $e$ is uniquely determined by $l^e +b$.

  We proceed by induction on $\card{B}$.
  For $B = \emptyset$, there is nothing to show because then $C=\emptyset$.
  So fix $b \in B$ such that $B=B' \uplus \{b\}$ with $\card{B'} < \card{B}$.
  Applying the induction hypothesis, there exists $e_0'\in \bN$ such that \eqref{eq:bzvn0} holds for all $\omega \in \Omega$ with $\pi(\omega) \in C'$, where
  \[
    C' \coloneqq \{\, l^{e} + b' : e \in \bN_{\ge e_0'},\, b' \in B' \,\}.
  \]

  Let
  \[
    \Omega_{b} \coloneqq \{\, \omega \in \Omega : \pi(\omega) = l^e + b \text{ with } e \in \bN \,\}.
  \]
  For $\omega \in \Omega_{b}$ with $\pi(\omega) = l^{e} + b$, define $\varepsilon(\omega) \coloneqq e$.
  For $j \in [1,n]$, let
  \[
    Q_j(y+b) = \sum_{s=0}^m q_{j,s} y^s \qquad\text{with}\qquad q_{j,s} \in K.
  \]
  Then
  \[
    0 = \sum_{j=1}^n Q_j(\pi(\omega)) f_j(\omega) = \sum_{j=1}^n \sum_{s=0}^m q_{j,s} l^{\varepsilon(\omega)s} f_j(\omega)
  \]
  may be considered as a solution to the unit equation $X_1 + \cdots + X_{n(m+1)}=0$ over the group $\langle G,l\rangle$.
  Let $I = [1,n] \times [0,m]$.
  For every partition $\cP=\{I_1,\ldots, I_p\}$ of $I$, let $\Omega_\cP \subseteq \Omega_{b}$ be the set satisfying: for all $\nu \in [1,p]$,
  \begin{itemize}
  \item $\sum_{(j,s) \in I_{\nu}} q_{j,s} l^{\varepsilon(\omega) s} f_j(\omega) = 0$, and
  \item $\sum_{(j,s) \in I} q_{j,s} l^{\varepsilon(\omega) s} f_j(\omega) \ne 0$ for all $\emptyset \ne I \subsetneq I_\nu$.
  \end{itemize}
  Since the sets $\Omega_\cP$, with $\cP$ ranging over all partitions, cover $\Omega_{b}$, it is sufficient to establish the claim of the lemma for each $\Omega_\cP$ separately.
  So fix $\cP$.
  If $\varepsilon(\Omega_{\cP})$ is finite, the claim is trivially true by choosing $e_0 \ge e_0'$ sufficiently large.
  We may therefore assume that $\varepsilon(\Omega_\cP)$ is infinite.

  The crucial step lies in showing that in the subsum indexed by $I_\nu$, all the monomials that occur must be equal.
  Thus, explicitly, we show: if $(j_1,s_1)$, $(j_2,s_2) \in I_\nu$ for some $\nu \in [1,p]$, then $s_{1}=s_{2}$.
  Clearly we only have to consider the case where $\card{I_\nu} \ge 2$.
  Then $q_{j_1,s_1} l^{\varepsilon(\omega)s_1} f_{j_1}(\omega) \ne 0$ and $q_{j_2,s_2} l^{\varepsilon(\omega) s_2} f_{j_2}(\omega) \ne 0$ for all $\omega \in \Omega_\cP$.
  By construction,
  \[
    \sum_{(j,s) \in I_{\nu}} q_{j,s} l^{\varepsilon(\omega) s} f_j(\omega) = 0
  \]
  is a non-degenerate solution to the unit equation $X_1 + \cdots + X_{\card{I_\nu}} = 0$ over $\langle G,l \rangle$.
  Thus there is a finite set $Y$ with
  \[
    l^{\varepsilon(\omega) (s_1 - s_2)}  \underbrace{q_{j_1,s_1} q_{j_2,s_2}^{-1} f_{j_1}(\omega) f_{j_2}(\omega)^{-1}}_{\in G} \in Y.
  \]
  for all $\omega \in \Omega_\cP$.
  Then $l^{\varepsilon(\omega) (s_1 - s_2)} \in \bigcup_{y \in Y} yG$ for all $\omega \in \Omega_\cP$.
  
  Since $\varepsilon(\omega)$ takes infinitely many values, there exist $y \in Y$ and $\omega$, $\omega' \in \Omega_\cP$ with $\varepsilon(\omega) \ne \varepsilon(\omega')$, and
  \[
    l^{\varepsilon(\omega)(s_1 - s_2)},\ l^{\varepsilon(\omega')(s_1 - s_2)}  \in yG.
  \]
  But then $l^{(\varepsilon(\omega) - \varepsilon(\omega'))(s_1 - s_2)} \in G$.
  By choice of $l$, this implies $s_{1}=s_{2}$.

  Taking the union over all $I_\nu$ containing a fixed $s \in [0,m]$ in the second coordinate, we conclude
  \[
    l^{\varepsilon(\omega) s} \sum_{j=1}^n q_{j,s}  f_j(\omega) = 0,
  \]
  and therefore $\sum_{j=1}^n q_{j,s} f_j(\omega)=0$ for all $s \in [0,m]$.
  But then
  \[
    0 = \sum_{j=1}^n \sum_{s=0}^m q_{j,s} {y}^{s} f_j(\omega) = \sum_{j=1}^n Q_j(y+b) f_j(\omega) 
  \]
  for all $\omega \in \Omega_\cP$.
  Substituting $y=-b$ into this polynomial, we obtain
  \[
    \sum_{j=1}^n Q_j(0) f_j(\omega) = 0 \qquad\text{for all}\qquad{\omega \in \Omega_\cP}. \qedhere
  \]
\end{proof}

\begin{remark}
  Since $K$ has characteristic $0$, the group $K^*$ is never finitely generated.
  In particular, one can always find $l$ as required in the previous theorem.
\end{remark}

As a first easy consequence we obtain the following.

\begin{lem} \label{l:poly-constant}
  Let $P \in K[y]$ be a polynomial, and suppose there exists $r \in \bN$ such that $P(n) \in rG_0$ for all sufficiently large $n \in \bN$.
  Then $P$ is constant.
\end{lem}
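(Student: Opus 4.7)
The plan is to apply Lemma~\ref{l:geom} with the roles of the $f_j$ played by a pointwise decomposition $P(n)=g_1(n)+\cdots+g_r(n)$, with $g_i(n)\in G_0$, which exists by hypothesis for all $n$ sufficiently large.

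First I would reduce to the case that $K$ is finitely generated by replacing $K$ with the subfield $K_0\subseteq K$ generated over $\bQ$ by the coefficients of $P$ together with a finite set of generators of $G$; all hypotheses are preserved, and Lemma~\ref{l:geom} becomes applicable. Pick $N\in\bN$ so that $P(n)\in rG_0$ for all $n\ge N$, choose for each such $n$ a decomposition as above, set $\Omega\coloneqq\bN_{\ge N}$, let $\pi\colon\Omega\to K$ be the inclusion, and define
\[
  f_1\equiv 1,\quad f_{j+1}\coloneqq g_j\ (j\in[1,r]),\qquad Q_1\coloneqq P,\quad Q_{j+1}\coloneqq -1\ (j\in[1,r]).
\]
Then $\sum_{j=1}^{r+1}Q_j(\pi(\omega))f_j(\omega)=0$ for every $\omega\in\Omega$, which is precisely the hypothesis of Lemma~\ref{l:geom}.

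Next I would invoke the lemma with $B=\{0\}$. This produces a finitely generated root-closed $G'\supseteq G$, and as observed in the paper one can pick a prime $l\in\bN_{\ge 1}\setminus G'$, since $G'\cap\bQ^*$ is a finitely generated subgroup of $\bQ^*$ and hence involves only finitely many primes. The lemma then yields $e_0\in\bN$ such that
\[
  P(0)\cdot 1 \;-\; g_1(l^e) \;-\; \cdots \;-\; g_r(l^e) \;=\; 0 \qquad\text{for every $e\ge e_0$ with $l^e\in\Omega$.}
\]
Combining this with $P(l^e)=g_1(l^e)+\cdots+g_r(l^e)$ gives $P(l^e)=P(0)$ for all sufficiently large $e$. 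Since the values $l^e$ are pairwise distinct, the polynomial $P-P(0)$ has infinitely many zeros and therefore vanishes identically, proving that $P$ is constant.

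The proof is essentially a direct application of Lemma~\ref{l:geom}, so there is no serious obstacle beyond bookkeeping. The mildly non-obvious point is recognising that one should treat $P(n)$ itself as an additional ``$Q_1\cdot f_1$'' term with $f_1\equiv 1\in G_0$, rather than attempting to absorb $P$ into the $f_j$; with $B=\{0\}$, the constant $Q_j(0)=P(0)$ appearing in the conclusion of the lemma is exactly what forces $P$ to be constant.
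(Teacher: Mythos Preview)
Your proof is correct and follows essentially the same route as the paper: reduce to a finitely generated field, write $P(n)=g_1(n)+\cdots+g_r(n)$, apply Lemma~\ref{l:geom} with $Q_1=P$, $f_1\equiv 1$, $Q_{j+1}=-1$, $f_{j+1}=g_j$ and $B=\{0\}$, and deduce $P(l^e)=P(0)$ for infinitely many $e$. Your choice of $\Omega=\bN_{\ge N}$ is in fact slightly cleaner than the paper's $\Omega=\bN$, since the defining relation is only assumed to hold for $n\ge N$.
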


\begin{proof}
  Working in a subfield, we may assume that $K$ is a finitely generated field.
  Let $P = \sum_{i=0}^t c_i y^i$ with $t \in \bN$ and $c_0$, $\ldots\,$,~$c_t \in K$.
  Fix $n_0 \in \bN$ such that $P(n) \in rG_0$ for $n \ge n_0$, and let $g_1$, $\ldots\,$,~$g_r \colon \bN \to G_0$ be such that
  \[
    P(n) - g_1(n) - \cdots - g_r(n) = 0 \qquad\text{for $n \ge n_0$.}
  \]
  We will apply a simple special case of Lemma~\ref{l:geom} to this equation. 
  To do so, we choose $\Omega=\bN$ with $\pi\colon \bN \to K$ given by $\pi(n)=n$, $Q_1=P$, $f_1=1$, and $f_j=g_{j-1}$, $Q_j=-1$ for $j \in [2,r+1]$. 
  For the set $B$ we choose $B=\{0\}$.
  By Lemma~\ref{l:geom} there exists $l \in \bN_{\ge 1} \setminus \sqrt{G}$ and $e_0 \in \bN$ such that, for all $e\ge e_0$,
  \[
    P(0) = g_1(l^e) + \cdots + g_r(l^e).
  \]
  This implies $P(0)=P(l^e)$ for $e \ge e_0$.
  Since a non-constant polynomial can take a fixed value at most $\deg(P)-1$ times, it follows that $P$ is constant.
\end{proof}

This immediately applies to series with polynomial-exponential coefficients as follows.

\begin{lem} \label{l:exppoly-geom}
  Let $Q_1$,~$\ldots\,$~$Q_l \in K[\vx]$, let $\vlambda_1$, $\ldots\,$,~$\vlambda_l \in (K^*)^d$ be pairwise distinct, and
  \[
    F = \sum_{\vec n \in \bN^d}\bigg( \sum_{j=1}^l Q_j(\vn) \bm\lambda_j^{\vn} \bigg) \vx^{\vn}.
  \]
  If $F$ is a Bézivin series, then $Q_1$, $\ldots\,$,~$Q_l$ are constant.
\end{lem}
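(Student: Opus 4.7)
The plan is to isolate each term $Q_j(\vec n)\vlambda_j^{\vec n}$ in the sum by applying a suitable shift-difference operator, then divide out $\vlambda_j^{\vec n}$ and invoke a multivariate version of Lemma~\ref{l:poly-constant}. By working inside a finitely generated subfield, we may assume $K$ is finitely generated.

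Fix $j \in [1,l]$. For each $k \ne j$, choose $i_k \in [1,d]$ with $\lambda_{j,i_k} \ne \lambda_{k,i_k}$, which is possible because $\vlambda_j \ne \vlambda_k$. Let $S_i$ denote the shift $(S_i h)(\vec n) = h(\vec n + \vec e_i)$ and set
\[
  V_j \coloneqq \prod_{k \ne j}(S_{i_k} - \lambda_{k,i_k})^{N_k}, \qquad N_k \coloneqq \deg_{n_{i_k}} Q_k + 1.
\]
The basic identity
\[
  (S_i - \lambda)\bigl(Q(\vec n)\vec{\mu}^{\vec n}\bigr) = \bigl(\mu_i Q(\vec n + \vec e_i) - \lambda Q(\vec n)\bigr)\vec{\mu}^{\vec n}
\]
shows that when $\lambda = \mu_i$, the leading coefficient in $n_i$ cancels and $\deg_{n_i}$ strictly drops, so $N_k$ iterations annihilate $Q_k(\vec n)\vlambda_k^{\vec n}$. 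When $\lambda \ne \mu_i$, a direct check on leading coefficients (including the case where the leading coefficient in $n_{i'}$ still depends on $n_i$) shows that every partial degree is preserved, picking up a nonzero scalar factor $\mu_i - \lambda$ when $i' = i$. Consequently $V_j f(\vec n) = \tilde Q_j(\vec n)\vlambda_j^{\vec n}$ for some polynomial $\tilde Q_j \in K[\vec x]$ with $\deg_{n_i}\tilde Q_j = \deg_{n_i} Q_j$ for every $i$.

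Since shifts of a Bézivin sequence are Bézivin with the same $r$ and $G$, and $K$-linear combinations are Bézivin at the cost of enlarging $r$ and $G$ to absorb the new coefficients, the sequence $V_j f$ is Bézivin. After further enlarging $G$ to contain the components of $\vlambda_j$, division by $\vlambda_j^{\vec n} \in G$ yields $\tilde Q_j(\vec n) \in r' G'_0$ for all $\vec n \in \bN^d$. It remains to establish a multivariate version of Lemma~\ref{l:poly-constant}: a polynomial $P \in K[\vec x]$ whose values on a cofinite subset of $\bN^d$ lie in $rG_0$ is constant. This reduces to the univariate case along rays: for any $\vec e \in \bN_{\ge 1}^d$, the univariate polynomial $t \mapsto P(t\vec e)$ takes values in $rG_0$ for $t$ sufficiently large, hence is constant by Lemma~\ref{l:poly-constant}, so each homogeneous component $R_s$ of $P$ of positive degree vanishes on the Zariski-dense set $\bN_{\ge 1}^d \subseteq \bA^d_K$ and is therefore identically zero. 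Applied to $\tilde Q_j$, this forces $\tilde Q_j$ to be constant, and hence, by the degree preservation, $Q_j$ is constant as well.

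The main technical obstacle is the degree bookkeeping for $V_j$: verifying that each factor strictly reduces $\deg_{n_{i_k}}$ on the $k$-th term while \emph{exactly} preserving every partial degree on the $j$-th term. The subtle point is that the leading coefficient of $Q_j$ in $n_{i'}$ (for $i' \ne i_k$) may itself depend on $n_{i_k}$, so one must check that the combination $\lambda_{j,i_k} Q_j(\vec n + \vec e_{i_k}) - \lambda_{k,i_k} Q_j(\vec n)$ has nonzero leading coefficient in each variable. Once this is settled, everything reduces to Lemma~\ref{l:poly-constant} via the straightforward ray restriction.
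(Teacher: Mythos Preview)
Your proof is correct and takes a genuinely different route from the paper. The paper argues by contradiction through the rational-function representation: by Corollary~\ref{c:poly-exp-repr}, a non-constant $Q_j$ forces a repeated factor in the denominator of $F=P/Q$; after clearing all other factors one reduces to $P/(1-\alpha x_1)^2$, and a short polynomial division plus coefficient extraction produces a degree-one univariate polynomial with values in $rG_0$, contradicting Lemma~\ref{l:poly-constant}. Your approach instead works directly on the coefficient sequence: you annihilate the unwanted terms with shift-difference operators, keep track of partial degrees to see that $Q_j$ survives intact (up to a nonzero scalar), divide out $\vlambda_j^{\vec n}$, and finish with a multivariate version of Lemma~\ref{l:poly-constant} via restriction to rays and Zariski density. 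Your argument is more self-contained (it bypasses Corollary~\ref{c:poly-exp-repr} entirely) and makes the isolation of each $Q_j$ explicit; the paper's argument is shorter because it leans on machinery already set up in Section~\ref{sec:special} and never leaves the univariate setting for the final contradiction. The degree-preservation bookkeeping you flag as the ``main technical obstacle'' is indeed the only delicate point, and your sketch handles it correctly: when $\lambda \ne \mu_i$, the leading coefficient in $n_{i'}$ of $\mu_i Q(\vec n + \vec e_i) - \lambda Q(\vec n)$ is itself a nonzero polynomial (its own top coefficient in $n_i$ picks up the factor $\mu_i - \lambda$).
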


\begin{proof}
  By Corollary~\ref{c:poly-exp-repr}, we have $F=P/Q$ with $Q=(1-\alpha_1 y_1) \cdots (1- \alpha_s y_s)$ where $\alpha_1$, $\ldots\,$,~$\alpha_d \in K^*$ and $y_1$, $\ldots\,$,~$y_s \in \{x_1,\ldots,x_d\}$ (repetition is allowed).
  We proceed by contradiction.
  Suppose that one of the polynomials $Q_j$ is not constant.
  Then a factor in $Q$ occurs with multiplicity $e \ge 2$, again by Corollary~\ref{c:poly-exp-repr}.
  Multiplying $F$ by some polynomial, the Bézivin property is preserved, and hence we may assume $F=P/(1-\alpha x_1)^2$ where $P$ is not divisible by $1-\alpha x_1$ (reindexing the variables if necessary).
  
  Through successive polynomial division, write $P = B_2(1-\alpha x_1)^2 + B_1(1-\alpha x_1) + B_0$ with $B_2 \in K[\vx]$ and $B_1$,~$B_0 \in K[x_2,\ldots,x_d]$.
  Since $(1-\alpha x_1)$ does not divide $P$, we have $B_0 \ne 0$.
  Then
  \[
    \frac{P}{Q} = B_2 + \frac{B_1}{(1-\alpha x_1)} + \frac{B_0}{(1-\alpha x_1)^2} = B_2 + \sum_{n=0}^\infty B_1 \alpha^n x_1^n + (n+1)B_0 \alpha^n x_1^n.
  \]
  Let $\vec u \in K[x_2,\ldots,x_d]$ be a monomial in the support of $B_1$.
  For all sufficiently large $n$,
  \[
    \alpha^{-n} [x_1^{n}\vec u]P = (n+1)[\vec u]B_0  + [\vec u]B_1.
  \]
  Let $A\coloneqq (y+1)[\vec u]B_0 + [\vec u]B_1 \in K[y]$.
  By choice of $\vec u$, the polynomial $A$ has degree $1$.
  But enlarging $G$ to ensure $\alpha \in G$, there exists $r \ge 0$ such that $A(n) \in rG_0$ for all sufficiently large $n$.
  This contradicts Lemma~\ref{l:poly-constant}.
\end{proof}

We are now ready to prove the main theorem of this section.

\begin{proof}[Proof of Theorem~\ref{thm:rational}]
  We may assume that $K$ is finitely generated.
  Let $F=\sum_{\vec n \in \bN^d} f(\vec n) {\vec x}^{\vec n}$ be a $D$-finite power series all of whose coefficients are in $rG_0$ for some $r \ge 0$ and a finitely generated subgroup $G \le K^*$.
  Since the sequence $f\colon \bN^d \to K$  is $P$-recursive, there exist $Q_{j,\vec a}$ as in Equation~\eqref{eq:precursive}.
  For every $j \in [1,d]$ there exists $\vec a \in [0,k]^d$ with $Q_{j,\vec a}(0) \ne 0$.
  Let
  \[
    H \coloneqq \sum_{\vec{a_1}, \ldots, \vec{a_d} \in [0,k]^d} Q_{1,\vec a_1}(0)\cdots Q_{d,\vec a_d}(0) {\vec x}^{\vec a_1+\dots+\vec a_d} F.
  \]
  Then $H=PF$ with a nonzero polynomial $P$, and it suffices to show that $H$ is rational.
  Since $H$ is $D$-finite as well, the sequence of coefficients of $H$, denoted by $h$, is $P$-recursive of some size $k'$; without restriction $k' \ge k$.

  For each $j \in [1,d]$ we will now apply Lemma~\ref{l:geom} in the following way.
  We choose $\Omega = \bN^d$ and $\pi\colon \bN^d \to \bN \subseteq K$ to be the projection on the $j$-th coordinate.
  We set $B = [0,k'd]$ and consider the equation
  \[
    0 = \sum_{\vec a \in [0,k]^d} Q_{j,\vec a}(n_j) f(\vec n - \vec a) = \sum_{\vec a \in [0,k]^d} Q_{j,\vec a}(\pi(\vec n)) f_{\vec a}(\vec n),
  \]
  where $f_{\vec a}\colon \bN^d \to G_0$ is defined by $f_{\vec a}(\vec n) = f(\vec n - \vec a)$.
  By assumption on $f$, this equation holds for $\vec n \in \bN_{\ge k'}^d$.
  Enlarging $G=G'$ if necessary, Lemma~\ref{l:geom} implies that, for any choice of $l \in \bN_{\ge 1} \setminus G$, there exists $e_0 \in \bN$ such that
  \[
    \sum_{\vec a \in [0,k]^d} Q_{j,\vec a}(0) f(\vec n - \vec a) = \sum_{\vec a \in [0,k]^d} Q_{j,\vec a}(0) f_{\vec a}(\vec n) = 0 \quad\text{for}\quad\vec n \in \bN_{\ge k'}^{j-1} \times (l^{e_0+\bN}+B) \times \bN_{\ge k'}^{d-j}.
  \]

  Further enlarging $G=G'$ if necessary, we may assume that the same finitely generated group $G$ works for all $j \in [1,d]$.
  Then we can also take the same $l \in \bN_{\ge 1} \setminus G$ for all $j \in [1,d]$, and finally, taking $e_0$ large enough, we can also assume the same $e_0$ works for all $j \in [1,d]$.
  As a further convenience, we may enlarge $e_0$ to ensure $l^{e_0} \ge k'd$.

  For $j \in [1,d]$, define $C_j \coloneqq l^{e_0+\bN}+[k'(d-1),k'd]$.
  Then, for all $j \in [1,d]$ and $\vec n \in \bN_{\ge k'd}^{j-1} \times C_j \times \bN_{\ge k'd}^{d-j}$,
  \[
    \begin{split}
      h(\vec n) &\coloneqq  \sum_{\vec{a_1}, \ldots, \vec{a_d} \in [0,k]^d} \Big( \prod_{i=1}^d Q_{i,\vec a_i}(0) \Big) \,f(\vec n - \vec a_1 - \dots - \vec a_d) \\
      &= \sum_{\vec{a_1}, \dots, \widehat{\vec{ a_j}}, \dots, \vec{a_d} \in [0,k]^d} \Big( \prod_{\substack{i=1\\i \ne j}}^dQ_{i,\vec a_i}(0)  \Big) \sum_{\vec a_j \in [0,k]^d} Q_{j,\vec a_j}(0) \,f\big( (\vec n - \sum_{\substack{i=1\\ i \ne j}}^d \vec a_i) - \vec a_j\big) = 0.
    \end{split}
  \]
  Because $h$ is $P$-recursive of size $k'$, Lemma~\ref{l:zero} implies that $h$ vanishes on $\bN_{\ge m}^d$ with $m = l^{e_0} + k'(d-1)$.

  For a subset $\emptyset \ne I \subseteq [1,d]$ and a tuple $(j_i)_{i\in I} \in [0,m]^{I}$ let
  \[
    u_{I,(j_i)_{i \in I}} \coloneqq \prod_{i \in I} x_i^{j_i} \prod_{i \in [1,d]\setminus I} x_i^{m+1}.
  \]
  We may write
  \[
    H = \sum_{\emptyset \ne I \subseteq [1,d]}  \sum_{\substack{(j_i)_{i \in I}\\ j_i \in [0,m]}} u_{I,(j_i)_{i \in I}}  H_{I,(j_i)_{i \in I}} \big( (x_\mu)_{\mu \in [1,d]\setminus I}\big),
  \]
  with suitable power series $H_{I,(j_i)_{i \in I}}$ in $d-\card{I}$ variables.
  Note that in each summand of the form
  \[
    u_{I,(j_i)_{i \in I}} H_{I,(j_i)_{i\in I}} \big((x_\mu)_{\mu \in [1,d]\setminus I}\big)
  \]
  the exponents of $x_i$ for $i \in I$ are fixed at some $j_i \in [0,m]$, while the exponents of $x_{i}$ are greater than $m$ for $i \in [1,d]\setminus I$.
  It follows that these summands have pairwise disjoint support, and hence the $H_{I,(j_i)_{i \in I}}$ have their coefficients in $rG_0$.
  These coefficient sequences being shifted sections of $h$, moreover the $H_{I,(j_i)_{i \in I}}$ are $D$-finite.
  Because $I \ne \emptyset$, and by induction on $d$, we may assume that the $H_{I,(j_i)_{i \in I}}$ are rational, and therefore so is $H$.
\end{proof}

\section{The denominator of rational Bézivin series}

Having shown that $D$-finite Bézivin series are rational, we now show that the denominator takes a particularly simple form.
In this section we work with the Hahn series ring $K\llparen x^H \rrparen$ and its additive valuation $\val \colon K\llparen x^H \rrparen \to H \cup \{\infty\}$ (see Subsection~\ref{ssec:hahn}). At first, the group $H$ will be an arbitrary totally ordered abelian group, but we restrict to $H=\bQ^d$ after Proposition~\ref{p:series-is-monomial}.

We make use of the following easy fact.

\begin{lem} \label{l:binom-independent}
  The polynomials
  \[
    \bigg\{\, { x \choose l } = \frac{x (x-1) \cdots (x-l+1)}{l!} \,:\, l \ge 0 \,\bigg\}
  \]
  form a basis of the polynomial ring $K[x]$.
\end{lem}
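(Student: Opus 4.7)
The plan is to observe that for every $l \ge 0$, the polynomial $\binom{x}{l}$ has degree exactly $l$ with leading coefficient $1/l!$, hence is nonzero. I would then argue by a standard dimension-count argument: for every $n \ge 0$, the polynomials $\binom{x}{0}, \binom{x}{1}, \ldots, \binom{x}{n}$ all lie in the $(n+1)$-dimensional $K$-vector space $K[x]_{\le n}$ of polynomials of degree at most $n$, and because they have pairwise distinct degrees $0, 1, \ldots, n$, they are linearly independent.

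Indeed, a nontrivial $K$-linear relation $\sum_{l=0}^n c_l \binom{x}{l}=0$ with not all $c_l$ zero would, upon taking the largest index $l_0$ with $c_{l_0}\neq 0$, yield a polynomial whose leading term is $c_{l_0}/l_0!\, x^{l_0}\neq 0$, a contradiction. Thus $\binom{x}{0}, \ldots, \binom{x}{n}$ is a basis of $K[x]_{\le n}$. Taking the union over $n\ge 0$ gives a basis of $K[x] = \bigcup_{n\ge 0} K[x]_{\le n}$.

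The argument is essentially a one-liner, and there is no real obstacle: the only ingredient needed is that $\chr K = 0$ (or at least that $l!$ is invertible in $K$ for every $l$), which ensures $\binom{x}{l}$ has nonzero leading coefficient, and this is guaranteed by the standing assumption of the paper. Alternatively, one could prove by induction on $n$ that $x^n$ is a $K$-linear combination of $\binom{x}{0}, \ldots, \binom{x}{n}$ using the identity $x^n = n!\binom{x}{n} + (\text{lower degree terms})$, thereby showing the family spans $K[x]$; combined with linear independence, this gives the basis claim.
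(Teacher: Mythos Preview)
Your proposal is correct. The paper does not actually supply a proof of this lemma; it merely records it as an ``easy fact,'' so your standard degree/dimension argument (relying on $\chr K=0$ so that $1/l!$ makes sense) is exactly the kind of justification the authors leave to the reader.
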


The following is a crucial reduction step, used later in describing the denominators of rational Bézivin series.
 
\begin{prop} \label{p:series-is-monomial}
  Let $H$ be a totally-ordered abelian group and $L=K \llparen x^H \rrparen$ its Hahn power series ring.
  Let $s \ge 1$ and, for $i \in [1,s]$, let $\alpha_i$,~$\beta_i \in L^*$ with the $\alpha_i$ pairwise distinct.
  For $n \ge 0$, let
  \[
    F_n = \sum_{i=1}^s \beta_i \alpha_i^n \in L.
  \]
  If there exist a finitely generated subgroup $G \le K^*$ and $r \ge 0$ such that for all $n \ge 0$ every coefficient of $F_n$ is an element of $rG_0$, then there exists an $i \in [1,s]$ such that the support of $\alpha_i$ is a monomial, that is
  \[
    \alpha_i = c_i x^{h_i} \qquad\text{with}\qquad c_i \in K^*\text{ and } h_i \in H.
  \]
\end{prop}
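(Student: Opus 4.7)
The plan is to argue by contradiction, assuming that every $\alpha_i$ has support with at least two monomials. The first step is a normalization: multiplying $F_n$ by $x^{-nh_*}$ for $h_* \coloneqq \min_i\val(\alpha_i)$ only shifts supports and therefore preserves the Bézivin hypothesis, so we may assume $\val(\alpha_i) \ge 0$ for all $i$ and that $I_0 \coloneqq \{\,i : \val(\alpha_i) = 0\,\}$ is non-empty. For $i \in I_0$ we then have a decomposition $\alpha_i = c_i + \tilde\alpha_i$ with $c_i \in K^*$ and $\val(\tilde\alpha_i) > 0$; the contradiction hypothesis says $\tilde\alpha_i \ne 0$ for every $i \in I_0$, so $\gamma_i \coloneqq c_i^{-1}\tilde\alpha_i$ lies in $L^*$.

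The second step extracts a single coefficient and reduces to the univariate case. Fix $h \in H$; by the binomial theorem in $L$,
\[
  \beta_i\alpha_i^n = c_i^n\sum_{k \ge 0}\binom{n}{k}\beta_i\gamma_i^k,
\]
only finitely many $k$ contribute to $[x^h]\beta_i\gamma_i^k$ (since $\val(\beta_i\gamma_i^k)\to\infty$), and for $i \notin I_0$ we have $\val(\beta_i\alpha_i^n) > h$ once $n$ is large. Grouping indices by their $c_i$-value then yields, for every $n \ge N$ and a suitable $N=N(h)$, an identity
\[
  [x^h]F_n \;=\; \sum_{c \in K^*}Q_{c,h}(n)\,c^n, \qquad Q_{c,h}(n) \;=\; \sum_{\substack{i \in I_0\\c_i = c}}\sum_{k \ge 0}\binom{n}{k}[x^h]\beta_i\gamma_i^k,
\]
with $Q_{c,h} \in K[n]$ and only finitely many of the $Q_{c,h}$ nonzero. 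Because $[x^h]F_n \in rG_0$ for every $n$, the shifted sequence $n \mapsto [x^h]F_{n+N}$ is Bézivin and polynomial-exponential for every $n \ge 0$, so the univariate case of Lemma~\ref{l:exppoly-geom} forces each $Q_{c,h}$ to be constant. By the linear independence of $\binom{n}{k}$ for $k \ge 1$ recorded in Lemma~\ref{l:binom-independent}, and letting $h$ range over all of $H$, this translates into the identities
\[
  \sum_{\substack{i \in I_0\\c_i = c}}\beta_i\gamma_i^k \;=\; 0 \qquad\text{in $L$, for every $c \in K^*$ and every $k \ge 1$.}
\]

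The final step is a Vandermonde contradiction. Pick any $i_0 \in I_0$ and set $c = c_{i_0}$, $J \coloneqq \{\,i \in I_0 : c_i = c\,\}$, $m \coloneqq \card{J}$. The elements $\gamma_i$ for $i \in J$ are pairwise distinct (because the $\alpha_i$ are) and nonzero, so the matrix $(\gamma_i^k)_{i \in J,\,k \in [1,m]}$ has Vandermonde-type determinant $\prod_{i \in J}\gamma_i \cdot \prod_{i<j}(\gamma_j-\gamma_i) \in L^*$. Since $L$ is a field, the system $\sum_{i \in J}\beta_i\gamma_i^k = 0$ for $k \in [1,m]$ forces $\beta_i = 0$ for every $i \in J$, contradicting $\beta_{i_0} \in L^*$. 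I expect the most delicate point to be the clean reduction to the hypotheses of Lemma~\ref{l:exppoly-geom}, which is only stated for series rather than eventual behavior, but this is handled by the shift by $N$ above, which makes the polynomial-exponential identity valid for every $n \ge 0$.
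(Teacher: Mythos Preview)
Your argument is correct and uses the same ingredients as the paper---normalization to $\min_i\val(\alpha_i)=0$, the binomial expansion of $(c_i+\theta_i)^n$, the univariate case of Lemma~\ref{l:exppoly-geom}, and a Vandermonde determinant---but deployed in a different order. The paper first runs a Vandermonde argument (its Step~1) on the list $\alpha_1,\ldots,\alpha_k$ together with their distinct leading coefficients $c_{k_\nu}$ to show that $T_n=\sum_i\beta_i(\alpha_i^n-c_i^n)$ satisfies $\val(T_n)\le m$ for infinitely many $n$; this pins down one specific exponent $h$ at which the truncated expansion has a provably non-constant polynomial coefficient, and then a single application of Lemma~\ref{l:exppoly-geom} at that $h$ yields the contradiction. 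You instead apply Lemma~\ref{l:exppoly-geom} at \emph{every} $h$, assemble the resulting constancy statements (via Lemma~\ref{l:binom-independent}) into the Hahn-series identities $\sum_{c_i=c}\beta_i\gamma_i^k=0$ in $L$ for all $k\ge 1$, and finish with a standard Vandermonde over the field $L$ on the pairwise distinct nonzero $\gamma_i$. Your route bypasses the paper's somewhat technical Step~1 entirely and is arguably more transparent; the paper's route has the minor advantage of invoking Lemma~\ref{l:exppoly-geom} only once rather than once per $h$. Both arguments tacitly use that multiples of a positive element of $H$ eventually dominate any fixed element---you need this for ``only finitely many $k$ contribute to $[x^h]\beta_i\gamma_i^k$'' and for discarding the $i\notin I_0$ terms, and the paper needs it in its Step~2 and cites Lemma~\ref{lem:order}.
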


\begin{proof}
  Let $m = \min\{ \val(\alpha_1), \ldots, \val(\alpha_s) \}$ and $m'= \min\{\val(\beta_1),\ldots,\val(\beta_s)\}$.
  We may without restriction replace $F_n$ by $x^{-m'-nm} F_n$, and may thus assume $m=m'=0$.
  After reindexing,
  \[
    0 = \val(\alpha_1)=\cdots =\val(\alpha_k) < \val(\alpha_{k+1}) \le \cdots \le \val(\alpha_s) \qquad\text{for some $k \in [1,s]$.}
  \]
  For $i \in [1,k]$, we have $\alpha_i = c_i + \theta_i$ with $c_i \in K^*$ and $\theta_i \in L$ where $\val(\theta_i) > 0$.
  We proceed with a proof by contradiction, and may therefore assume $\theta_i \ne 0$ for all $i \in [1,k]$.
  
  Reindexing the $\alpha_1$, $\ldots\,$,~$\alpha_k$ again if necessary, we may partition
  \[
    [1,k]= [k_1,k_2-1] \uplus [k_2,k_3-1] \uplus \cdots \uplus [k_{p},k_{p+1}-1],
  \]
  with $1=k_1 < k_2 < \cdots < k_{p} < k_{p+1}=k+1$, such that $c_i=c_j$ if and only if $i$,~$j \in [k_{\nu},k_{\nu+1}-1]$ for some $\nu$.
  
  Let
  \[
      T_n \coloneqq \sum_{i=1}^k \beta_i (\alpha_i^n -  c_i^n)  =
       \sum_{\nu=1}^p \bigg[ \sum_{j=k_\nu}^{k_{\nu+1}-1}  \beta_j \alpha_j^n  - \Big( \sum_{j=k_\nu}^{k_{\nu+1}-1} \beta_j \Big) c_{k_\nu}^n \bigg].
  \]
  Then $\val(T_n) > 0$ for all $n$, because the constant terms cancel.

  \medskip
  \noindent
  \textbf{Step 1.}
  \emph{There exists $m \in H$ with $m > 0$ such that $\val(T_n) \le m$ for infinitely many $n \ge 0$.}
  \medskip
  
  Let
  \[
    A_n =
    \begin{pmatrix}
      \alpha_1^n & \cdots & \alpha_k^n & c_{k_{1}}^n & \cdots & c_{k_p}^{n} \\
      \vdots & \ddots & \vdots & \vdots & \ddots & \vdots \\
      \alpha_1^{n+k-1+p} & \cdots & \alpha_k^{n+k-1+p} & c_{k_{1}}^{n+k-1+p} & \cdots & c_{k_p}^{n+k-1+p} \\
    \end{pmatrix}.
  \]
  Extracting scalars, these are Vandermonde matrices with
  \[
    \det(A_n) = \alpha_1^n \cdots \alpha_k^n c_{k_{1}}^n \cdots c_{k_p}^n \prod_{1 \le i < j \le k} (\alpha_j - \alpha_i) \prod_{1 \le i <j \le p} (c_{k_j} - c_{k_i}) \prod_{\substack{1 \le i \le k \\ 1 \le j \le p}} (c_{k_j} - \alpha_i).
  \]
  By construction $\det(A_n) \ne 0$.
  Since $\val(\alpha_i)=0$ for $i \in [1,k]$, moreover $\val(\det(A_n))$ is in fact independent of $n$.
  Let $m_1 \coloneqq \val(\det(A_n))$ and $\vec w = (w_1,\,\ldots,w_{k+p})^T$ with
  \[
    w_i =
    \begin{cases}
      
      \beta_i & \text{if $1 \le i \le k$, and} \\
      -\sum_{j=k_\nu}^{k_{\nu+1}-1} \beta_j & \text{if $i=k+\nu$ with $1 \le \nu \le p$.}
    \end{cases}
  \]

  Since $\beta_1 \ne 0$, we have $w_1 \ne 0$.
  Let $m_2 \coloneqq \val(w_{1})$.
  The $i$-th entry of $A_n\vec w$ is $T_{n+i-1}$.
  Hence, using the subscript $1$ to denote the first coordinate of a vector,
  \[
    \det(A_n)w_{1} =  \big( \operatorname{adj}(A_n)A_n  \vec w )_{1} = \big(\operatorname{adj}(A_n) (T_n, \ldots, T_{n+k+p-1})^T \big)_{1} = \sum_{j=0}^{k+p-1} \gamma_j T_{n+j}
  \]
  for some $\gamma_j$ with $\val(\gamma_j) \ge 0$.
  Since $\val(\det(A_n)w_{1}) = m_1 + m_2$, we must have $\val(T_{n+\delta(n)}) \le m_1 + m_2$ for some $\delta(n) \in [0,k+p-1]$.

  \medskip
  \noindent
  \textbf{Step 2.} \emph{There exist $N_0 \ge 0$ such that for all $n \ge N_0$,
  \[
    \val\bigg( T_n -  \sum_{i=1}^k \sum_{l=1}^{N_0}  {n \choose l} \beta_i c_i^{n-l} \theta_i^l \bigg) > m.
  \]}
\medskip
Substituting $\alpha_i=c_i + \theta_i$ into the definition of $T_n$ and expanding, for $0 \le N_0 \le n$,
\[
  T_n = \sum_{i=1}^k \sum_{l=1}^{N_0} {n \choose l} \beta_i c_i^{n-l} \theta_i^l + \sum_{i=1}^k \sum_{l=N_0+1}^n {n \choose l} \beta_i c_i^{n-l} \theta_i^l.
\]
  There exists $N_0$ such that $\val(\beta_i\theta_i^l) =l\val(\theta_i)+\val(\beta_i) > m$ for all $l \ge N_0$, and choosing such $N_0$ establishes the claim of \textbf{Step 2}.

  \bigskip
  Enlarging $N_0$ if necessary, we may also assume $\val(\alpha_i^{N_0}) > m$ for $i \in [k+1,s]$ by Lemma~\ref{lem:order}.
  Let
  \[
    P_n \coloneqq \sum_{i=1}^k \sum_{l=1}^{N_0}  {n \choose l} \beta_i c_i^{n-l} \theta_i^l.
  \]
  Fix $h \in H$ with $0 < h \le m$ such that $[x^h]P_n \ne 0$ for some $n \ge N_0$.
  The --- crucial --- existence of such an $h$ is guaranteed by Steps 1 and 2.
  Substituting Hahn series expressions for $\beta_i$ and $\theta_i$, we can express $[x^h]P_n$ as
  \[
    [x^h]P_n = \sum_{i \in I} p_{h,i}(n) c_i^n,
  \]
  where $I \subseteq [1,k]$ is such that the $c_i$ are pairwise distinct, and $p_{h,i} \in K[x]$ are polynomials.
  Since $l$ is always at least $1$ in the expression for $P_n$, the polynomials $p_{h,i}$ are either zero or non-constant as a consequence of Lemma~\ref{l:binom-independent}.
  By choice of $h$, there must be at least one $i$ with $p_{h,i} \ne 0$.

  For $n \ge N_0$,
  \[
    [x^h]P_n = [x^h]T_n = [x^h]F_n - \sum_{i=1}^k [x^h]\beta_i c_i^n - \underbrace{[x^h] \sum_{i=k+1}^s \beta_i \alpha_i^n}_{=0}.
  \]
  Enlarging $G$, we may assume $c_i$, $-[x^h]\beta_i \in G$, and hence $[x^h]P_n \in (r+k)G_0$.
  Now the univariate case of Lemma~\ref{l:exppoly-geom} implies that the $p_{h,i}$ are constant, and by our construction therefore $p_{h,i}=0$ for all $i \in I$.
  This is a contradiction to $[x^h]P_n \ne 0$ for some $n \ge N_0$.
\end{proof}

\begin{lem}  \label{l:monomial-exponents}
  Let $K$ be algebraically closed and let
  \[
    P = (1-c_1\vx^{\vec a_1}) \cdots (1-c_s \vx^{\vec a_s}) \in K \llparen x_1^{\bQ}, \ldots, x_d^{\bQ} \rrparen.
  \]
  with $c_i \in K^*$ and $\vec a_1$, $\ldots\,$,~$\vec a_s \in \bQ^d\setminus \{\vec 0\}$.
  \begin{itemize}
  \item 
    If $P \in K[\vx^{\pm 1}]$, then there exist $\vec b_1$, $\ldots\,$,~$\vec b_t \in \bZ^d \setminus \{\vec 0\}$ and $c_1'$, ~$\ldots\,$,~$c_t' \in K^*$ such that
    \[
      P = (1-c_1'\vx^{\vec b_1})\cdots (1-c_t'\vx^{\vec b_t}).
    \]
  \item
    If $P \in K[\vx]$ with $P(\vec 0)=1$, one can even take $\vec b_1$, $\ldots\,$,~$\vec b_t \in \bN^d \setminus \{\vec 0\}$.
  \end{itemize}
\end{lem}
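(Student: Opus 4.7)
\emph{Plan.} I would first clear denominators by choosing a positive integer $N$ with $N\vec a_i \in \bZ^d$ for every $i$, and pass to the Laurent polynomial ring $R = K[y_1^{\pm 1}, \ldots, y_d^{\pm 1}]$ inside the Hahn series field, where $y_j \coloneqq x_j^{1/N}$. Under the identification $x_j = y_j^N$, one has $K[\vx^{\pm 1}] = K[\vec y^{\pm N}] \subseteq R$, and each factor $1 - c_i \vx^{\vec a_i}$ becomes an honest Laurent polynomial $1 - c_i \vec y^{N\vec a_i} \in R$, so the given identity holds inside $R$.

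I would then apply Lemma~\ref{l:special-polys} to each $1 - c_i \vec y^{N\vec a_i}$ to refine the product to a factorization of $P$ into $R$-irreducibles of the form $1 - \gamma_k \vec y^{\vec d_k}$, where $\gamma_k \in K^*$ and $\vec d_k \in \bZ^d \setminus \{\vec 0\}$ has coprime entries. Now let $G = \mu_N^d$, with $\mu_N \subseteq K^*$ the group of $N$-th roots of unity, act on $R$ by $\vec\zeta \cdot y_j = \zeta_j y_j$; the invariant subring is $R^G = K[\vec y^{\pm N}]$. Since $P \in R^G$, uniqueness of factorization in the UFD $R$ forces $G$ to permute the irreducible factors appearing in $P$, preserving their multiplicities. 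The coprimality of the entries of each $\vec d_k$ makes the character $\vec\zeta \mapsto \prod_j \zeta_j^{(d_k)_j}$ surject onto $\mu_N$, so the $G$-orbit of $1 - \gamma_k \vec y^{\vec d_k}$ consists of the $N$ factors $1 - \gamma_k \zeta \vec y^{\vec d_k}$ as $\zeta$ ranges over $\mu_N$. The classical identity $\prod_{\zeta \in \mu_N}(1 - \gamma\zeta T) = 1 - \gamma^N T^N$ then collapses the orbit product to $1 - \gamma_k^N \vec y^{N\vec d_k}$, which upon translation back via $y_j^N = x_j$ equals $1 - \gamma_k^N \vx^{\vec d_k}$. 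Collecting one such orbit product per $G$-orbit (with the common multiplicity) recovers $P$ as a product of factors $1 - c'_k \vx^{\vec b_k}$ with $\vec b_k = \vec d_k \in \bZ^d \setminus \{\vec 0\}$. The most delicate point, I expect, will be the orbit bookkeeping --- confirming that $G$ preserves multiplicities and keeping careful track of associate irreducibles (the only remaining ambiguity is the involution $1 - \gamma \vec y^{\vec d} \leftrightarrow 1 - \gamma^{-1} \vec y^{-\vec d}$, which does not affect whether the exponent lies in $\bZ^d$).

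For the second statement, assume $P \in K[\vx]$ with $P(\vec 0) = 1$. Having obtained the factorization $P = \prod_k (1 - c'_k \vx^{\vec b_k})$ with $\vec b_k \in \bZ^d \setminus \{\vec 0\}$ from the first part, I would apply the $i$-th coordinate valuation $v_i \colon K(\vx)^* \to \bZ$ (recording the minimum exponent of $x_i$). Since $v_i(1 - c'_k \vx^{\vec b_k}) = \min\{0, (\vec b_k)_i\}$ and $v_i(P) \ge 0$, additivity gives
\[
  0 \le v_i(P) = \sum_k \min\{0, (\vec b_k)_i\};
\]
each summand on the right is non-positive, so they must all vanish, forcing $(\vec b_k)_i \ge 0$ for every $i$ and $k$, i.e.\ $\vec b_k \in \bN^d$.
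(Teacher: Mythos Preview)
Your approach is correct and genuinely different from the paper's on both parts.

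For the first bullet, the paper avoids the orbit argument entirely. It multiplies $P$ by $F = \prod_i \sum_{j=0}^{N-1} c_i^j \vx^{j\vec a_i}$ to obtain the Laurent polynomial $Q = \prod_i(1 - c_i^N\vx^{N\vec a_i})$, then argues that $F = Q/P$ is simultaneously a rational function in $K(\vx)$ and a Hahn series of finite support, hence lies in $K[\vx^{\pm 1}]$; thus $P \mid Q$ in $K[\vx^{\pm 1}]$, and the known factorization of $Q$ (Lemma~\ref{l:special-polys}) pins down the irreducible factors of $P$. Your route instead works inside $R = K[\vec y^{\pm 1}]$ and exploits the $\mu_N^d$-action to collect irreducible factors into invariant orbit products. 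The paper's divisibility trick is slicker because it sidesteps the bookkeeping you flag: in your argument, the orbit products a priori only reconstruct $P$ up to a unit of $K[\vx^{\pm 1}]$ (precisely because of the involution you mention), and one must still argue this unit can be absorbed by flipping some factors. This is doable --- the unit arises as a product of involution-units of the original $R$-factors and can be unwound --- but it is a genuine extra step beyond ``does not affect whether the exponent lies in $\bZ^d$.'' Your acknowledgment that this is the delicate point is well-placed.

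For the second bullet, your valuation argument is cleaner than the paper's. The paper instead factors $P$ in $K[\vx]$ into irreducibles normalized to have constant term $1$, observes these remain irreducible in $K[\vx^{\pm 1}]$, and matches them against the factorization from the first part via the UFD property. Your one-line computation with $v_i$ achieves the same conclusion more directly; note it does rely on the first part producing a factorization with \emph{no} residual unit, so the care in Part~1 pays off here.
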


\begin{proof}
  Let $\vec a_i = (a_{i,1},\ldots,a_{i,d})$ for $i \in [1,s]$.
  Let $N \in \bN$ be such that $N a_{i,j} \in \bZ$ for all $i \in [1,s]$ and $j \in [1,d]$.
  Since
  \[
    1 - c_i^N\vx^{N\vec a_i} = (1-c_i \vx^{\vec a_i}) \sum_{j=0}^{N-1} c_i^j \vx^{j\vec a_i},
  \]
  there exists $F \in K \llparen x_1^{\bQ},\ldots,x_d^{\bQ} \rrparen$, having finite support, such that $PF=Q$ with $Q=(1- c_1^N \vx^{N\vec a_1}) \cdots (1- c_s^{N} \vx^{N \vec a_s})$.
  Since $P$, $Q$ are Laurent polynomials, the quotient $F=Q/P$ is a rational function in $\vx$.
  Therefore $Q/P$ has a Laurent series expansion at the origin, which coincides with the Hahn series $F$.
  Thus $F$ is in fact a Laurent series in $\vx$.
  Since $F$ has finite support, it is even a Laurent polynomial.
  We conclude that $P$ divides $Q$ in the Laurent polynomial ring $K[\vx^{\pm 1}]$.
  
  The Laurent polynomial ring is a factorial domain, arising from $K[\vx]$ by inverting the prime elements $x_1$, $\ldots\,$,~$x_d$.
  By Lemma~\ref{l:special-polys} we know that all factors of $Q$ in $K[\vx^{\pm 1}]$ are --- up to units of $K[\vx^{\pm 1}]$ --- of the form $1-c'\vx^{\vec b}$ with $c' \in K^*$ and $\vec b \in \bZ^d \setminus \{\vec 0\}$.
  Therefore
  \[
    P = (1-c_1'\vx^{\vec b_1})\cdots (1-c_t'\vx^{\vec b_t})
  \]
  with $\vec b_1$, $\ldots\,$,~$\vec b_t \in \bZ^d \setminus \{\vec 0\}$, and this factorization is unique up to order and associativity in $K[\vx^{\pm 1}]$.

  Suppose now that $P$ is a polynomial and $P(\vec 0)=1$.
  Then $P=Q_1\cdots Q_r$ with irreducible polynomials $Q_i \in K[\vx]$.
  Since $P(\vec 0)=1$, we have $Q_1(\vec 0)\cdots Q_r(\vec 0)=1$.
  Replacing each $Q_i$ by $Q_i/Q_i(\vec 0)$, we can therefore assume $Q_i(\vec 0)=1$ for each $i \in [1,r]$.
  In particular, no $Q_i$ is a monomial.
  Therefore, $Q_1$, $\ldots\,$, ~$Q_r$ remain prime elements in the localization $K[\vx^{\pm 1}]$.
  Since $K[\vx^{\pm 1}]$ is factorial, this implies $r=t$ and that, after reindexing the polynomials $Q_i$ if necessary, we may assume that $Q_i$ is associated to $1- c_i'\vx^{\vec b_i}$ for each $i \in [1,r]$.
  Explicitly, this means $Q_i = q_i{\vx^{\vec e_i}}(1-c_i'\vx^{\vec b_i})$ with $\vec e_i \in \bZ^d$ and $q_i \in K^*$. Then $Q_i \in K[\vec x]$ forces $\vec e_i \in \bN^d$, and so there are two possibilities: either $\vec e_i=\vec 0$ and $q_i=1$, in which case necessarily and $\vec b_i \in \bN^d$, or $q_i c_i' \vx^{\vec e_i+\vec b_i}=1$.
  In either case $Q_i$ is of the desired form.
\end{proof}

\begin{lem} \label{l:minpoly}
  Let $Q \in K[x_1,\ldots,x_{d-1},x_d]$ be irreducible with $\deg_{x_d}(Q) \ge 1$, and let $L\coloneqq K \llparen x_1^{\mathbb{Q}},\ldots ,x_{d-1}^{\mathbb{Q}} \rrparen$.
  Let
  \[
    Q = \mu \prod_{i=1}^s (1 - \lambda_i x_d)
  \]
  with $\mu \in L^*$ and $\lambda_1$, $\ldots\,$, $\lambda_s \in L$.
  If some $\lambda_i$ is a monomial in $L$, then all of them are.
\end{lem}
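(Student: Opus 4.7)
The plan is Galois-theoretic, working over the rational function field $F \coloneqq K(x_1,\ldots,x_{d-1})$. First, I would observe that $F \subseteq L$ and, since all roots $\lambda_1^{-1}$, $\ldots\,$,~$\lambda_s^{-1}$ of $Q$ (viewed as a polynomial in $x_d$) lie in $L$ by assumption, the splitting field $E$ of $Q$ over $F$ can be taken inside $L$. Since $\chr K = 0$, the extension $E/F$ is Galois. By Gauss's lemma applied to the UFD $K[x_1,\ldots,x_{d-1}]$, irreducibility of $Q$ in $K[\vx]$ implies irreducibility of $Q$ in $F[x_d]$, so $\mathrm{Gal}(E/F)$ acts transitively on $\{\lambda_1^{-1},\ldots,\lambda_s^{-1}\}$, equivalently on $\{\lambda_1,\ldots,\lambda_s\}$.

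Next, suppose $\lambda_1 = c x^h$ is a monomial with $c \in K^*$ and $h \in \bQ^{d-1}$. I would pick $N \ge 1$ with $Nh \in \bZ^{d-1}$, so that $\lambda_1^N = c^N x^{Nh}$ is a Laurent monomial in $x_1,\ldots,x_{d-1}$, hence lies in $F$. Consequently $\lambda_1^N$ is fixed by $\mathrm{Gal}(E/F)$, and for every $\sigma \in \mathrm{Gal}(E/F)$ one has $\sigma(\lambda_1)^N = c^N x^{Nh}$.

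It then remains to show that any $y \in L$ with $y^N = c' x^{h'}$, where $c' \in K^*$ and $h' \in \bQ^{d-1}$, must itself be a monomial. From $\val(y^N) = h'$ one gets $\val(y) = h'/N =: g$, so $y = b x^g(1+\tau)$ with $b = [x^g]y \in K^*$ and either $\tau = 0$ or $\val(\tau) > 0$. Raising to the $N$-th power gives $b^N(1+\tau)^N = c'$, so $(1+\tau)^N$ is a nonzero constant; expanding the binomial, if $\tau \ne 0$ then the $x^{\val(\tau)}$-coefficient of $(1+\tau)^N$ equals $N \cdot [x^{\val(\tau)}]\tau \ne 0$, contradicting its being a constant (this is where $\chr K = 0$ enters). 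Hence $\tau = 0$ and $y = b x^g$ is a monomial.

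Applying this to $y = \sigma(\lambda_1)$ for each $\sigma \in \mathrm{Gal}(E/F)$, and using transitivity of the Galois action to write every $\lambda_j$ as $\sigma(\lambda_1)$ for some $\sigma$, one concludes that every $\lambda_j$ is a monomial. The main technical point is the Hahn series root-extraction in the third paragraph; the Galois-theoretic scaffolding is otherwise routine.
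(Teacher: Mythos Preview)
Your proof is correct and follows essentially the same approach as the paper's. Both arguments hinge on the observation that, by Gauss's Lemma, $Q$ is irreducible over $L_0 \coloneqq K(x_1,\ldots,x_{d-1})$, and that $\lambda_1^N \in L_0$ for some $N \ge 1$; hence every $\lambda_j$ is an $N$-th root of the monomial $\lambda_1^N$.

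The only difference is in packaging. You phrase the conjugacy via the Galois action and then verify by a short Hahn-series computation that $N$-th roots of monomials in $L$ are monomials. The paper instead notes directly that $\lambda_1^{-1}$ is a root of $P = x_d^N - a^N \vx^{Ne} \in L_0[x_d]$, so $Q \mid P$ by irreducibility, and then reads off the roots of $P$ explicitly as $a\zeta^j \vx^{e}$ for a primitive $N$-th root of unity $\zeta$. The paper's version is marginally more elementary in that it avoids invoking the Galois group and sidesteps the Hahn-series root extraction; your version makes the transitivity explicit and isolates the $N$-th-root step as a standalone fact about $L$, which is arguably cleaner to cite elsewhere.
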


\begin{proof}
  Let $R \coloneqq K[x_1,\ldots,x_{d-1}]$ and $L_0 \coloneqq K(x_1,\ldots,x_{d-1})$.
  Since $Q$ is irreducible in $K[x_1,\ldots,x_d]$ and is not contained in $R$, it is also irreducible as a univariate polynomial in $R[x_d]$.
  By Gauss's Lemma, then $Q$ is irreducible in $L_0[x_d]$.
  Since $L$ is algebraically closed, $Q$ can be expressed as a product of linear factors as above.

  Suppose now without restriction that $\lambda_1^{-1}= a \vx^{\vec e}$ with $a \in K^*$ and $\vec e \in \bQ^{d-1}$.
  Let $N \in \bZ_{\ge 1}$ with $Ne \in \bZ^{d-1}$.
  Then $\lambda_1^{-1}$ is a root of $P = x_d^{N} - a^{N}\vx^{eN} \in L_0[x_d]$.
  But $P= \prod_{j=0}^{N-1} (x_d - a\zeta^j \vx^e)$ in $L[\vx]$, where $\zeta \in L^*$ is a primitive $N$-th root of unity.
  Since all roots of $P$ are monomials, and $Q$ divides $P$ by irreducibility, all roots of $Q \in L[x_d]$ are monomials.
\end{proof}

\begin{thm} \label{t:denominator1}
  Let $K$ be algebraically closed and let $F \in K\llbracket \vx \rrbracket$ be rational, with $F=P/Q$, where $P$,~$Q \in K[\vx]$ are coprime polynomials and $Q(\vec 0)=1$.
  If $F$ is a Bézivin series, then there exist $s \ge 0$, non-constant monomials $u_1$, $\ldots\,$,~$u_s \in K[\vx]$, and $c_1$, $\ldots\,$,~$c_s \in K^*$ such that
  \[
    Q = (1-c_1u_1)\cdots (1-c_su_s).
  \]
\end{thm}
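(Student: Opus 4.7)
The plan is to prove the theorem by double induction: outer on $d$, inner (for fixed $d$) on the number of distinct irreducible factors of $Q$ in $K[\vx]$. The base case $d=0$ is trivial since $Q \in K$ with $Q(\vec 0)=1$ forces $Q=1$. For the inductive step at $d \ge 1$, the easy subcase is that no irreducible factor of $Q$ involves $x_d$: expanding $F = \sum_k (P_k/Q) x_d^k$ with $P_k \in K[x_1,\ldots,x_{d-1}]$, each $P_k/Q$ is a Bézivin series in $d-1$ variables, so the outer inductive hypothesis applies. Since $\gcd(P,Q)=1$, every irreducible factor of $Q$ divides the reduced denominator of some $P_k/Q$, and Lemma~\ref{l:special-polys} then forces each such factor into the desired form.

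For the hard subcase, some irreducible factor of $Q$ involves $x_d$. To produce one, I first reduce to simple poles by replacing $F$ by $F' \coloneqq F \cdot (Q/\tilde Q)$, where $\tilde Q$ is the squarefree part of $Q$; this remains Bézivin by Lemma~\ref{l:multiply}, with reduced denominator $\tilde Q$ (since $\gcd(P,Q)=1$ implies $\gcd(P,\tilde Q)=1$). Working over the algebraically closed Hahn series field $L = K\llparen x_1^\bQ,\ldots,x_{d-1}^\bQ \rrparen$, factor $\tilde Q = q_0 \prod_j (1-\lambda_j x_d)$ in $L[x_d]$ with distinct $\lambda_j \in L$ (squarefreeness in $L[x_d]$ following from Gauss's lemma and separability in characteristic $0$), and perform partial fractions to obtain
\[
  F' = R(x_d) + \sum_j \frac{c_j}{1-\lambda_j x_d},
\]
with $R \in L[x_d]$ and $c_j \in L^*$ (nonvanishing from coprimality). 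For $n$ past the degree of $R$, the coefficient $[x_d^n] F' = \sum_j c_j \lambda_j^n \in L$ is a Hahn series all of whose coefficients lie in $r'G'_0$ for suitable $r',G'$. Proposition~\ref{p:series-is-monomial}, applied after re-indexing $n$, gives some $\lambda_{j_0}$ that is a monomial in $L$, and then Lemma~\ref{l:minpoly} ensures that every root of the unique $K[\vx]$-irreducible factor $Q_{\nu_0}$ of $Q$ having $\lambda_{j_0}$ among its roots is a monomial.

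The main obstacle is to convert this algebraic information into the concrete form $Q_{\nu_0} = 1 - cu$. Since $Q_{\nu_0}$ is irreducible, its $L$-roots form a single Galois orbit over $K(x_1,\ldots,x_{d-1})$. If $\lambda_{j_0} = c\vx^{\vec a}$ with $\vec a \in \bQ^{d-1}$ of exact denominator $N$, this orbit is $\{c \zeta^k \vx^{\vec a} : 0 \le k < N\}$ with $\zeta$ a primitive $N$-th root of unity, and the identity $\prod_{k=0}^{N-1}(1-\zeta^k y) = 1-y^N$ yields $Q_{\nu_0} = q_0(1 - c^N \vx^{N\vec a} x_d^N)$ in $L[x_d]$. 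Since $Q_{\nu_0} \in K[\vx]$ and $q_0(\vec 0)=1$, one must have $(N\vec a, N) \in \bN^d$ (otherwise $q_0$ would have to vanish at $\vec 0$). The factorization $Q_{\nu_0} = q_0 \cdot (1 - c^N \vx^{\vec b})$ inside $K[\vx]$ combined with irreducibility of $Q_{\nu_0}$ forces $q_0 = 1$; Lemma~\ref{l:special-polys} then forces $\gcd(\vec b) = 1$, yielding $Q_{\nu_0} = 1 - c^N \vx^{\vec b}$ of the desired form. To close the inner induction, $F \cdot Q_{\nu_0}^{e_{\nu_0}}$ is Bézivin by Lemma~\ref{l:multiply} and has denominator $Q/Q_{\nu_0}^{e_{\nu_0}}$ with one fewer distinct irreducible factor.
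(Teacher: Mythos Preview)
Your proof is correct and rests on the same engine as the paper's: induction on $d$, factoring the denominator over the algebraically closed Hahn field $L=K\llparen x_1^{\bQ},\ldots,x_{d-1}^{\bQ}\rrparen$, then invoking Proposition~\ref{p:series-is-monomial} and Lemma~\ref{l:minpoly}. The organization differs in two places worth recording. First, the paper reduces immediately to irreducible $Q$ and uses a resultant to replace the numerator by some $C\in K[x_1,\ldots,x_{d-1}]$ before doing partial fractions; you instead keep all factors, pass to the squarefree part, and run an inner induction peeling off one irreducible factor at a time, which lets you avoid the resultant step entirely (your separate ``easy subcase'' handles what the paper absorbs into $Q_0$). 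Second, to pass from ``the $\lambda_j$ are Hahn monomials'' to ``$Q_{\nu_0}=1-cu$ with $u\in K[\vx]$'', the paper invokes Lemma~\ref{l:monomial-exponents}, whereas you compute the Galois orbit of $c\vx^{\vec a}$ over $K(x_1,\ldots,x_{d-1})$ directly. Your route is a bit more self-contained; the paper's is slightly shorter once Lemma~\ref{l:monomial-exponents} is available.

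One point you use without proof deserves a sentence: that the Galois orbit of $c\vx^{\vec a}$ over $L_0=K(x_1,\ldots,x_{d-1})$ has size exactly $N$ (the exact denominator of $\vec a$). This follows from Kummer theory since $K$ contains $\mu_N$: the minimal polynomial divides $T^N-c^N\vx^{N\vec a}$, and if it had degree $M<N$ then $(c\vx^{\vec a})^M=\vx^{M\vec a}$ (up to a scalar) would lie in $L_0$, forcing $M\vec a\in\bZ^{d-1}$, contradicting the minimality of $N$. With that in hand your identity $\prod_{k=0}^{N-1}(1-\zeta^k y)=1-y^N$ finishes the computation.
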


\begin{proof}
  We proceed by induction on $d$.
  If $d=0$, then the claim holds trivially with $s=0$.
  Suppose $d \ge 1$ and that the claim holds for $d-1$.

  Let $Q = Q_1 \cdots Q_n$ where $Q_1$, $\ldots\,$,~$Q_n \in K[\vx]$ are irreducible ($n \ge 0$).
  For each $j \in [1,n]$ the series $P/Q_j = FQ_1\cdots Q_{j-1}Q_{j+1}\cdots Q_n$ is also a rational Bézivin series.
  It therefore suffices to show: if $F=P/Q$ with an irreducible polynomial $Q \in K[\vx]$, not dividing $P$, and $Q(\vec 0)=1$ and $F$ is a Bézivin series, then $Q$ is of the form $1-cu$ with $c \in K^*$ and a monomial $u \in K[\vx]$.

  By reindexing, we may assume that $Q$ has degree at least one in the variable $x_d$.
  Then, since $Q$ is irreducible and does not divide $P$, there are polynomials $A$ and $B$ in $K[\vx]$ such that $C \coloneqq AP+BQ\in K[x_1,\ldots,x_{d-1}]\setminus \{0\}$.
  Since $FA$ and $B$ are both rational Bézivin series, the same is true for $C/Q=FA+B$.
  Now since $Q$ has degree at least one in $x_d$ and since it has distinct roots as a polynomial in $x_d$, we may use the fact that $L\coloneqq K \llparen x_1^{\mathbb{Q}},\ldots ,x_{d-1}^{\mathbb{Q}} \rrparen$ is algebraically closed to factor $Q$ and write
  \[
    C/Q = CQ_0^{-1} (1-\lambda_1 x_d)^{-1}\cdots (1-\lambda_s x_d)^{-1},
  \]
  where $Q_0 \in K[x_1,\ldots ,x_{d-1}]$ and $\lambda_1$, $\ldots\,$,~$\lambda_s\in L^*$ are pairwise distinct.
  Using partial fractions, we may write this as
  \[
    C/Q=\alpha + \sum_{i=1}^s \frac{\beta_i}{1-\lambda_i x_d},
  \]
  where $\alpha$,~$\beta_i\in L$ and the $\beta_i$ are nonzero.
  Then if we expand, we have that for $n\ge 1$, the coefficient of $x_d^n$ in $C/Q$ is 
  \[
    F_n\coloneqq \sum_{i=1}^s \beta_i \lambda_i^n\in L.
  \]
  Applying Proposition~\ref{p:series-is-monomial} to this family of Hahn series, one of the $\lambda_i$ is of the form $\lambda_i=c_i u_i$ with $c_i \in K^*$ and $u_i \in L$ a monomial, that is, of the form $u_i = x_1^{e_{i,1}}\cdots x_{d-1}^{e_{i,d-1}}$ with $e_{i,j} \in \bQ$.
  By Lemma~\ref{l:minpoly}, then all $\lambda_i$ are of the form $\lambda_i=c_iu_i$ with $c_i \in K^*$ and $u_i \in L$ a monomial.

  Since $CQ_0^{-1} = CQ^{-1}(1-c_1u_1x_d)\cdots (1-c_su_sx_d)$, it follows that the rational series $CQ_0^{-1}$ is a Bézivin series.
  The induction hypothesis implies $Q_0=(1-c_{s+1}u_{s+1})\cdots (1-c_{t}u_t)$ with $c_j \in K^*$ and monomials $u_{s+1}$,~$\ldots\,$,~$u_t \in K[x_1,\ldots,x_{d-1}]$.
  Replacing $u_i$ by $u_ix_d$ for $i \in [1,s]$, we have
  \[
    Q = (1-c_1u_1) \cdots (1-c_t u_t) \in K[\vx].
  \]
  By Lemma~\ref{l:monomial-exponents}, we can rewrite this product in such a way that the Hahn monomials $u_j$ are monomials with nonnegative integer exponents, that is, monomials in the polynomial ring $K[\vx]$.
  Then $t=1$ by irreducibility of $Q$ and the claim is shown.
\end{proof}

%\begin{remark}
%  From Lemma~\ref{l:monomial-exponents} it is clear that the factors of the denominator $Q$ in the previous proposition can be taken to be irreducible.
%  Following the line of reasoning of Bézivin, it is then not hard to show that these factors must be pairwise distinct if one assumes that $P$ and $Q$ are coprime.
%  We will instead deduce this later from a stronger structure theorem. \ds{TBD: do this somewhere.}
%\end{remark}

\section{Structural decomposition of rational Bézivin series}

\begin{prop} \label{p:skew-geom}
  Let $K$ be algebraically closed.
  Every rational Bézivin series over $K$ is expressible as a \textup{(}trivially ambiguous\textup{)} sum of skew-geometric rational series.
\end{prop}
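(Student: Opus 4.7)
The plan is to combine the main structural result on denominators (Theorem~\ref{t:denominator1}), the polynomial-exponential decomposition (Theorem~\ref{t:polyexp}), and the constancy lemma for polynomial-exponential Bézivin series (Lemma~\ref{l:exppoly-geom}). Let $F = P/Q \in K\llbracket \vx \rrbracket$ be a rational Bézivin series with $P$, $Q$ coprime and $Q(\vec 0) = 1$. Theorem~\ref{t:denominator1} tells us that $Q$ is a product of factors of the form $1 - cu$ with $c \in K^*$ and a non-constant monomial $u$, so $F$ satisfies the hypothesis of Theorem~\ref{t:polyexp}~\ref{t-polyexp:denom}. Consequently, the coefficient sequence $f$ of $F$ is piecewise polynomial-exponential on simple linear subsets of $\bN^d$: there are a partition $\bN^d = \cS_1 \uplus \cdots \uplus \cS_m$ into simple linear sets $\cS_k = \vec a_k + \vec b_{k,1}\bN + \cdots + \vec b_{k,s_k}\bN$, polynomials $A_{k,j} \in K[\vx]$, and pairwise distinct tuples $\valpha_{k,j} \in (K^*)^d$, with
\[
  f(\vec n) = \sum_{j=1}^{l_k} A_{k,j}(\vec n)\, \valpha_{k,j}^{\vec n} \qquad \text{for } \vec n \in \cS_k.
\]

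The main obstacle, and the heart of the argument, is to upgrade this to a piecewise \emph{exponential} representation, i.e.\ to show that each $A_{k,j}$ may be chosen to be constant. To this end, fix $k$ and apply the implication \ref{equiv-pexp:full}$\,\Rightarrow\,$\ref{equiv-pexp:subset} of Lemma~\ref{l:equiv-pexp} to reparametrize $\cS_k$ by $\bN^{s_k}$; after grouping terms so that the resulting vectors $\vbeta_{k,j} \in (K^*)^{s_k}$ are pairwise distinct, one obtains polynomials $B_{k,j} \in K[y_1,\ldots,y_{s_k}]$ with
\[
  f\bigl(\vec a_k + m_1 \vec b_{k,1} + \cdots + m_{s_k}\vec b_{k,s_k}\bigr) = \sum_{j} B_{k,j}(\vec m)\, \vbeta_{k,j}^{\vec m} \qquad \text{for } \vec m \in \bN^{s_k}.
\]
Now form the auxiliary power series
\[
  G_k \coloneqq \sum_{\vec m \in \bN^{s_k}} f\bigl(\vec a_k + m_1\vec b_{k,1} + \cdots + m_{s_k}\vec b_{k,s_k}\bigr)\, \vec y^{\vec m} \in K\llbracket y_1,\ldots,y_{s_k} \rrbracket.
\]
Every coefficient of $G_k$ is also a coefficient of $F$ and so lies in $rG_0$; hence $G_k$ is itself a Bézivin series. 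Lemma~\ref{l:exppoly-geom}, applied to $G_k$ in its pairwise distinct exponential representation, forces each $B_{k,j}$ to be constant. Transporting this conclusion back via \ref{equiv-pexp:subset}$\,\Rightarrow\,$\ref{equiv-pexp:full} of Lemma~\ref{l:equiv-pexp}---whose explicit construction clearly sends constants to constants---shows that the $A_{k,j}$ may also be chosen constant. Thus $f$ is piecewise exponential on simple linear subsets of $\bN^d$.

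From here the proof concludes quickly. By the ``exponential $\Rightarrow$ sum of skew-geometric'' direction of Lemma~\ref{l:skewgeom-decomposition}~\ref{lsg:poly-exp}, which uses the standing assumption that $K$ is algebraically closed, the series $F$ is a sum of skew-geometric series. Finally, Lemma~\ref{l:skewgeom-decomposition}~\ref{lsg:refinement} refines any such sum into a trivially ambiguous one, yielding the desired representation of $F$. The crux of the plan is the passage from polynomial-exponential to exponential coefficients on each piece; this is what makes the reparametrization $\vec m \mapsto \vec a_k + \sum_i m_i \vec b_{k,i}$ essential, since it turns the restriction of $f$ into the coefficient sequence of a genuine, non-piecewise polynomial-exponential series on $\bN^{s_k}$, to which Lemma~\ref{l:exppoly-geom} directly applies.
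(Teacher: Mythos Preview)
Your proof is correct and follows essentially the same route as the paper: use Theorem~\ref{t:denominator1} plus Theorem~\ref{t:polyexp} to get a piecewise polynomial-exponential description, reparametrize each simple linear piece $\cS_k$ by $\bN^{s_k}$ via Lemma~\ref{l:equiv-pexp}, and apply Lemma~\ref{l:exppoly-geom} to the resulting Bézivin series $G_k$ to force constancy of the polynomial parts. The only (harmless) difference is cosmetic: you make a round trip \ref{equiv-pexp:subset}$\,\Rightarrow\,$\ref{equiv-pexp:full} back to the $A_{k,j}$ and then invoke Lemma~\ref{l:skewgeom-decomposition}\ref{lsg:poly-exp}--\ref{lsg:refinement}, whereas the paper stays in the $\vec u$-coordinates and reads off the skew-geometric pieces directly (which already have common support $\cS_k$, so trivial ambiguity is automatic); note also that the algebraic closedness of $K$ is actually needed for your use of \ref{equiv-pexp:subset}$\,\Rightarrow\,$\ref{equiv-pexp:full}, not for the forward direction of Lemma~\ref{l:skewgeom-decomposition}\ref{lsg:poly-exp}.
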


\begin{proof}
  By Theorems~\ref{t:denominator1} and \ref{t:polyexp}, the coefficient sequence of $F$ is piecewise polynomial-exponential on simple linear subsets of $\bN^d$.
  Let $\cS = \vec a + \vec b_1 \bN + \cdots + \vec b_s \bN$ be such a simple linear set on which $F$ is polynomial-exponential and set $u_i \coloneqq \vx^{\vec b_i}$ for $i \in [1,s]$ and $\vec u = (u_1,\ldots,u_s)$.
  By Lemma~\ref{l:equiv-pexp}, there exist $Q_1$, $\ldots\,$,~$Q_l \in K[y_1,\ldots,y_s]$ and $\vlambda_1$, $\ldots\,$,~$\vlambda_s \in (K^*)^s$ such that
  \[
    F \odot \ind_\cS = \vx^{\vec a} \sum_{\vec m \in \bN^s} \sum_{j=1}^l Q_j(\vec m) \vlambda_j^{\vec m} {\vec u}^{\vec m}.
  \]
  We consider $H(\vec u) \coloneqq \vx^{-\vec a} (F\odot \ind_\cS)$ as a power series in $\vec u$.
  Clearly $H$ is a Bézivin series.
  By Lemma~\ref{l:exppoly-geom}, the series $H(\vec u)$ is a sum of geometric series in $\vec u$, and thus $F \odot \ind_\cS$ is a sum of skew-geometric series in $\vec x$.
  Since we can partition $\bN^d$ into finitely many disjoint such simple linear sets $\cS$, the series $F$ also is a sum of skew-geometric series.
\end{proof}

\begin{remark}
  Instead of using Theorem~\ref{t:polyexp}, the main reduction in the proof of the previous theorem can also be derived from the fact that the coefficient sequence of a generating series
  \[
    \frac{1}{(1-v_1)\cdots(1-v_l)}
  \]
  with monomials $v_1$, $\ldots\,$,~$v_l$ is piecewise polynomial on simple linear subsets of $\bN^d$.
  Therefore one may substitute results on vector partitions \cite{sturmfels95} for Theorem~\ref{t:polyexp}.
  (In contrast to Theorem~\ref{t:polyexp}, here the coefficients in front of the monomials are all equal to $1$.)
  
  We sketch the main part of the argument.
  Consider an expression of the form $1/Q$ with
  \[
    Q \coloneqq (1-c_1u_1)\cdots (1-c_lu_l),
  \]
  where $c_1$, $\ldots\,$,~$c_l \in K^*$ and $u_1$, $\ldots\,$,~$u_l$ are non-constant monomials.
  First we may assume that the monomials $u_1$, $\ldots\,$,~$u_l$ have a common zero $(\alpha_1,\ldots,\alpha_d)$ in the algebraic closure $\overline{K}$---otherwise we may use Hilbert's Nullstellensatz to reduce the expression $1/Q$ into a sum of expressions with fewer factors in the denominator. (This is actually also the first step of the partial fraction decomposition \cite{raichev12} used in the proof of Theorem~\ref{t:polyexp}.)

  Now let $v$ be a monomial.
  The coefficient of $v$ in $1/Q$ is
  \[
    \begin{split}
      \sum_{\substack{e_1,\ldots, e_{l} \ge 0 \\ u_{1}^{e_1}\cdots u_{l}^{e_{l}}=v}} \prod_{j=1}^{l} c_{j}^{e_j} & = \sum_{\substack{e_1,\ldots, e_{l} \ge 0 \\ u_{1}^{e_1}\cdots u_{l}^{e_{l}}=v}} \prod_{j=1}^{l} u_{j}(\alpha_{1},\ldots,\alpha_{d})^{-e_j}
      \\ &= \sum_{\substack{e_1,\ldots, e_{l} \ge 0 \\ u_{1}^{e_1}\cdots u_{l}^{e_{l}}=v}} v(\alpha_{1},\ldots,\alpha_{d})^{-1} = \mu(v)\,  v(\alpha_{1},\ldots,\alpha_{d})^{-1}.
      \end{split}
  \]
  Here $\mu(v) \in \bN$ is the number of ways of writing $v$ as a product of $u_{1}$, $\ldots\,$,~$u_{l}$.
  Thus $1/Q$ decomposes as a Hadamard product
  \[
    1/Q = \frac{1}{(1-u_{1}) \cdots (1-u_{l})} \,\odot\, \frac{1}{(1 - \alpha_{1}^{-1}x_1)\cdots (1- \alpha_{d}^{-1}x_d)}.
  \]
  The coefficients of the left factor are therefore polynomial on simple linear subsets of $\bN^d$ and one proceeds from there.
\end{remark}

\subsection{The constants can be taken in the group}

Let $\valpha=(\alpha_1,\ldots,\alpha_s), \vbeta=(\beta_1,\ldots,\beta_s) \in (K^*)^s$ with $s \in \bN$.
We say that $\valpha$ and $\vbeta$ are \defit{relatively non-torsion} if none of the quotients $\alpha_i/\beta_i$ for $i \in [1,s]$ is a root of unity.
Let $u_1$, $\ldots\,$,~$u_s \in K[\vec x]$ be algebraically independent monomials.
Consider an expression of the form
\[
  \frac{P}{(1- \alpha_1 u_1)\cdots (1-\alpha_s u_s)} + \frac{Q}{(1- \beta_1 u_1)\cdots (1-\beta_s u_s)}
\]
with polynomials $P$,~$Q \in K[\vx]$.
If, say, $\alpha_1/\beta_1$ is a root of unity of order $n$, then we may use the identity
\[
  (1-\alpha_1^n u_1^n)= (1-\alpha_1u_1)\bigg( \sum_{j=0}^{n-1}\alpha_1^ju_1^j \bigg).
\]
to replace $1- \alpha_1u_1$ in the denominator by $1-\alpha_1^nu_1^n$ and analogously for $1-\beta_1 u_1$.
We may thus assume that in any representation, the coefficient vectors in the denominator are relatively non-torsion (also for more than two summands).
In regards to the coefficient sequence, and its description in terms of simple linear sets, this amounts to a refinement
\[
  \vec a + \vec b_1 \bN + \cdots + \vec b_s \bN = \bigcup_{j \in [0,n-1]^d} (\vec a + j \vec e_1) + n \vec b_1\bN + \vec b_2 \bN \cdots + \vec b_s \bN.
\]

\begin{lem} \label{l:lpower}
  Let $\vlambda=(\lambda_1,\ldots,\lambda_d) \in (K^*)^d$.
  \begin{enumerate}
  \item\label{lpower:inf} If some $\lambda_i$ is not a root of unity, then $\{\, \vlambda^{\vec n} : \vec n \in \bN^d \} \subseteq K^*$ is infinite.
  \item\label{lpower:linear} For every $c \in K^*$, the set $\mathcal L\coloneqq \{\, \vec n \in \bN^d : \vlambda^{\vec n} = c \,\}$ is a semilinear set.
    If moreover $\{\, \vlambda^{\vec n} : \vec n \in \bN^d \,\}$ is infinite, then $\mathcal L$ has rank at most $d-1$.
  \end{enumerate}
\end{lem}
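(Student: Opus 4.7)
The plan is to handle the two parts separately. Part \ref{lpower:inf} is direct: if $\lambda_i \in K^*$ is not a root of unity, then its powers $\lambda_i^n$ for $n \in \bN$ are pairwise distinct, and since $\lambda_i^n = \vlambda^{n \vec e_i}$, the set $\{\vlambda^{\vec n} : \vec n \in \bN^d\}$ is infinite.

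For part \ref{lpower:linear}, the key object will be the group homomorphism $\phi \colon \bZ^d \to K^*$, $\vec m \mapsto \vlambda^{\vec m}$, with kernel $H \coloneqq \ker \phi \le \bZ^d$. Then $\mathcal L = \phi^{-1}(c) \cap \bN^d$: if $\mathcal L$ is empty there is nothing to prove, so I may fix $\vec n_0 \in \mathcal L$ and obtain $\mathcal L = (\vec n_0 + H) \cap \bN^d$.

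To see that $\mathcal L$ is semilinear, I plan to invoke the theorem of Ginsburg and Spanier recalled in Section~\ref{ssec:semilinear}: the semilinear subsets of $\bN^d$ are exactly those definable in Presburger arithmetic. The quotient $\bZ^d/H$ is a finitely generated abelian group, so the condition $\vec m \in H$ can be expressed as a finite conjunction of linear equations (for the free part of $\bZ^d/H$) and congruences modulo integers (for the torsion part), both of which are Presburger; together with the constraints $n_i \ge 0$, this shows $\mathcal L$ is Presburger-definable and hence semilinear.

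For the rank bound, the observation is that any simple linear subset $\vec a + \vec b_1 \bN + \cdots + \vec b_s \bN \subseteq \mathcal L$ has $\vec b_j = (\vec a + \vec b_j) - \vec a \in \mathcal L - \mathcal L \subseteq H$, so $\vec b_1, \ldots, \vec b_s$ are linearly independent elements of $H$; hence the rank of $\mathcal L$ is bounded by $\operatorname{rank}_\bZ(H)$. It therefore suffices to show that if $\{\vlambda^{\vec n} : \vec n \in \bN^d\}$ is infinite, then $\operatorname{rank}_\bZ(H) \le d-1$. Contrapositively, if $H$ has rank $d$ then $H$ has finite index in $\bZ^d$, so $\phi$ factors through the finite group $\bZ^d/H$ and its image $\phi(\bZ^d) \supseteq \{\vlambda^{\vec n} : \vec n \in \bN^d\}$ is finite. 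No serious obstacle is expected; the lemma is essentially a bookkeeping exercise once the homomorphism $\phi$ is in place.
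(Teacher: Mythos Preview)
Your proof is correct. Part~\ref{lpower:inf} matches the paper's (which simply says ``Clear''), and your rank-bound argument for part~\ref{lpower:linear} is essentially identical to the paper's: both observe that the kernel $H = U \le \bZ^d$ has rank at most $d-1$ when $\bZ^d/H$ is infinite, and that any simple linear piece of $\mathcal L$ has its generating vectors inside $H$.

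Where you diverge is in establishing semilinearity of $\mathcal L$. The paper argues directly via Dickson's Lemma: the set $\mathcal L_0 \coloneqq H \cap \bN^d$ is a linear set (a finitely generated submonoid of $\bN^d$), and since $\mathcal L$ has only finitely many minimal elements $\vec a_1,\ldots,\vec a_k$ in the coordinatewise order, one obtains $\mathcal L = \bigcup_{i=1}^k (\vec a_i + \mathcal L_0)$ explicitly. You instead appeal to the Ginsburg--Spanier theorem, encoding the coset condition $\vec n - \vec n_0 \in H$ in Presburger arithmetic via the structure of $\bZ^d/H$. Both routes are valid given what the paper has available in Section~\ref{ssec:semilinear}. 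The paper's approach is more self-contained and actually exhibits the linear pieces of $\mathcal L$, which makes the rank bound immediate; your approach is slicker but leans on a black box, and you then need the separate observation that the $\vec b_j$'s of any simple linear piece lie in $H$ to recover the rank bound.
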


\begin{proof}
  \ref{lpower:inf} Clear.

  \ref{lpower:linear}
  We may assume $\mathcal L \ne \emptyset$.
  First observe that $U\coloneqq \{\, \vec n \in \bZ^d : \vlambda^{\vec n} = 1 \,\}$ is a subgroup of $\bZ^d$ and therefore free abelian.
  If the set $\{\, \vlambda^{\vec n} : \vec n \in \bN^d \,\}$ is infinite, then $\bZ^d/U$ must be infinite, and therefore $U$ has rank at most $d-1$.
  Now $\mathcal L_0 \coloneqq U \cap \bN^d$ is a linear set.
  
  By Dickson's Lemma (see \cite[Lemma II.7.1]{sakarovitch09}), the set $\mathcal L$ has finitely many minimal elements $\vec a_1$, $\ldots\,$,~$\vec a_k$ with respect to the partial order on $\bN^d$, and it follows  that $\mathcal L = \bigcup_{i=1}^k \vec a_i + \mathcal L_0$.
\end{proof}

\begin{prop} \label{p:coeff-in-g}
  Let $F$ be a trivially ambiguous sum of skew-geometric series that is Bézevin and, more specifically, has coefficients in $rG_0$ for some $r \ge 0$.
  Then $F$ can be written as an \emph{unambiguous sum} of series of the form
  \[
    F_{\cS}=\sum_{i=1}^l \frac{g_{i,0}u_0}{(1-g_{i,1}u_1)\cdots (1-g_{i,s}u_s)},
  \]
  where $u_0$, $u_1$, $\ldots\,$,~$u_s \in K[\vx]$ are monomials with $u_1$, $\ldots\,$,~$u_s$ algebraically independent, and where $g_{i,\nu} \in G$ for all $i \in [1,l]$ and $\nu \in [1,s]$.
  Moreover, one can take $l \le r$.
\end{prop}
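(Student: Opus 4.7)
My plan is: first gather summands of the trivially ambiguous representation of $F$ by common support to obtain an unambiguous decomposition into blocks $F_\cS$; second, apply the partial-fractions trick recalled immediately before the proposition to refine each block so that the coefficient vectors inside are pairwise relatively non-torsion, and enlarge $G$ (which preserves the Bézivin hypothesis with the same $r$) to contain all the constants $c_{i,\nu}$ ($\nu \in [1,s]$) appearing in the refined representation. After these easy steps the representation already has $g_{i,\nu} \in G$ for $\nu \in [1,s]$, and the real content of the proposition is the bound $l \le r$ on each refined block, which I will obtain via a unit-equation argument.

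Fix one refined block with support $\cS = \vec a + \vec b_1 \bN + \cdots + \vec b_s \bN$, parameterize it by $\vec m \in \bN^s$, and write the coefficient sequence as
\[
  f(\vec m) = \sum_{i=1}^l X_i(\vec m), \qquad X_i(\vec m) = c_{i,0} \prod_{\nu=1}^s c_{i,\nu}^{m_\nu}.
\]
The hypothesis $f(\vec m) \in rG_0$ supplies $h_1, \ldots, h_r \colon \bN^s \to G_0$ with $\sum_i X_i(\vec m) = \sum_j h_j(\vec m)$; after a further enlargement of $G$ so that the $c_{i,0}$ lie in it, this becomes an $(l+r)$-term unit equation over $G$. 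Following the partition-and-projective-class decomposition recalled in the unit-equations paragraph of Section~2, I partition $\bN^s$ into finitely many semilinear pieces on each of which (i) a specific partition $\cP$ of $[1,l+r]$ splits the equation into non-degenerate sub-equations, and (ii) each such sub-equation realises a fixed projective class (from a finite set provided by Evertse--van der Poorten--Schlickewei).

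The decisive observation is: on any such piece, if two distinct $i_1, i_2 \in [1,l]$ lie in a common block of $\cP$, then $X_{i_1}/X_{i_2} = \prod_\nu (c_{i_1,\nu}/c_{i_2,\nu})^{m_\nu}$ is constant on the piece, and by relative non-torsion at least one ratio $c_{i_1,\nu}/c_{i_2,\nu}$ fails to be a root of unity, so Lemma~\ref{l:lpower}\ref{lpower:linear} restricts $\vec m$ to a semilinear set of rank at most $s-1$. Because $\bN^s$ is not a union of finitely many semilinear sets of rank $\le s-1$, at least one piece has rank $s$; on that piece every block of $\cP$ contains at most one index from $[1,l]$. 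Since each $X_i(\vec m)$ is nonzero, no block of $\cP$ can be a singleton $\{i\}$ with $i \in [1,l]$, so each such $i$ belongs to a block containing a non-empty subset $J_i \subseteq [1,r]$, and the $J_i$ are pairwise disjoint. This yields $l \le r$.

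The main obstacle I anticipate is bookkeeping in the refinement step: one must verify that the partial-fractions refinement can be performed uniformly across every block of the original trivially ambiguous decomposition so that the resulting representation of $F$ remains a finite \emph{unambiguous} sum, and that the enlargement of $G$ is consistent with the unit-equation argument applied on every sub-block. Both concerns are resolved by performing the refinements on all blocks first, combining sub-summands whose denominators and initial monomials coincide, and only then fixing a single enlarged $G$ containing every constant that appears.
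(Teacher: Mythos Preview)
Your step of enlarging $G$ to contain the constants $c_{i,\nu}$ is the gap. The proposition asserts that the $g_{i,\nu}$ lie in the \emph{same} group $G$ appearing in the hypothesis $f(\vec n)\in rG_0$; this is exactly what Theorem~\ref{thm:main-bezivin}\ref{t-bezivin:skewgeom} needs. Replacing $G$ by a larger $G'$ proves only that the constants lie in $G'$, which is vacuous. Showing that a power of each denominator constant lies in the original $G$ is substantive, and the paper spends the first half of its proof on it: one clears all denominator factors except $1-c\,u_1$, so that for a suitable monomial $v$ the coefficient of $vu_1^n$ equals $ac^n$ and lies in $BG_0$ for large $n$ and some finite set $B$ (Lemma~\ref{l:multiply}); a unit-equation argument on $ac^n=\sum_{b\in B}b\,g_b(n)$ then forces $c^{n_2-n_1}\in G$ for some $n_1<n_2$. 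Only after this is known does the partial-fractions refinement place the denominator constants in $G$ itself.

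Your unit-equation argument for $l\le r$ is essentially the paper's and is correct, but note that it also yields more than you extract. Once the $\vec c_i$ have entries in the original $G$, the non-degenerate block containing the single index $i\in[1,l]$ gives $c_{i,0}=\sum_{j\in J_\tau} g_j(\vec n)\,\vec c_i^{-\vec n}$, a sum of at most $|J_\tau|$ elements of the original $G$; splitting the $i$-th summand accordingly achieves numerators in $G$ and $l\le r$ simultaneously. Your second enlargement of $G$ (to absorb the $c_{i,0}$) again bypasses this and leaves the numerator claim unproved, which matters since the paper's proof and the application to Theorem~\ref{thm:main-bezivin} both require $g_{i,0}\in G$ as well.
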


\begin{proof}
  We may partition $\bN^d$ into simple linear sets so that on each such simple linear set $\cS$,
  \begin{equation} \label{eq:fsumref}
    F \odot \ind_\cS = F_\cS = \sum_{i=1}^l \frac{c_{i,0}u_0}{(1-c_{i,1}u_1)\cdots (1-c_{i,s}u_s)},
  \end{equation}
  with $c_{i,\nu} \in K^*$, with algebraically independent monomials $u_1,\ldots,u_s$, and with an arbitrary monomial $u_0$.
  We first show that the coefficients in the denominators can be taken in $G$.
  After that we will deal with the numerators and show that we can also achieve $l \le r$.

  \smallskip
  First, by combining summands with the same denominator, we may assume that no denominator occurs twice in \eqref{eq:fsumref}.
  Then the uniqueness statement of Corollary~\ref{c:poly-exp-repr}, applied over the polynomial ring $K[u_1,\ldots,u_s]$, implies that the representation of $F_\cS$ in this form is unique.
  In particular, each $1-c_{i,\nu}u_\nu$ indeed occurs as a factor of the reduced denominator of $F_\cS$, considered as a rational function in $K(u_1,\ldots,u_s)$.

  To deal with the denominators, it suffices to show that for every $i \in [1,l]$ and $\nu \in [1,s]$ there exists $N \ge 1$ such that $c_{i,\nu}^N \in G$.
  Then we can use the remarks preceding this proposition to replace the denominators in such a way that the coefficients are in $G$.
  By symmetry, it suffices to show this for $i=\nu=1$.
  Multiplying $u_0^{-1}F_\cS$ by a suitable polynomial in $K[u_1,\ldots,u_s]$ to clear all denominators other than $1-c_{1,1}u_1$ and setting $c \coloneqq c_{1,1}$, we obtain a rational series
  \[
    H\coloneqq \frac{P(u_1,\ldots,u_s)}{1-cu_1},
  \]
  with $1-cu_1$ not dividing $P(u_1,\ldots,u_s) \in K[u_1,\ldots,u_s]$ (note that the elements $1-c_{i,\nu}u_\nu$ are irreducible in $K[u_1,\ldots,u_s]$).
  By Lemma~\ref{l:multiply}, there is a finite set $B$ such that all coefficients of $H$ are contained in $BG_0$.
  
  Working now in the polynomial ring $K[u_2,\ldots,u_s][u_1]$, we can use polynomial division to write
  \[
    P=Q(1-cu_1) + R
  \]
  with $Q \in K[u_1,u_2,\ldots,u_s]$ and $0 \ne R \in K[u_2,\ldots,u_s]$.
  Fix any monomial $v \in \supp(R)$ and let its coefficient in $R$ be $a$.
  Then, for large $n$,
  \[
    [vu_1^n]H = a c^n \in BG_0
  \]
  For $b \in B$ let $g_b \colon \bN \to G_0$ be such that 
  \[
    a c^n = \sum_{b \in B} b g_b(n) \qquad\text{for large $n$}.
  \]
  Fix a subset $B' \subseteq B$ such that, for infinitely many $n$,
  \[
    a c^n - \sum_{b \in B'} b g_b(n) = 0
  \]
  yields a non-degenerate solution to the unit equation $X_0 + \sum_{b \in B'} X_b=0$ over the group $G'$ generated by $G$, $B'$, and $-1$.
  Let $b \in B'$.
  Then there exists a constant $\beta \in K^*$ such that
  \[
    \frac{a c^{n}}{b g_b(n)} = \beta,
  \]
  for infinitely many $n$.
  Let $n_1 < n_2$ be two such values.
  Then
  \[
    \frac{a c^{n_1}}{b g_b(n_1)} = \frac{a c^{n_2}}{b g_b(n_2)}
  \]
  implies $c^{n_2-n_1} \in G$.
  This finishes the claim about the denominators.
  
  \medskip
  Going back to the representation of $F_\cS$ in \eqref{eq:fsumref}, and refining the set $\cS$ if necessary, we can therefore assume $c_{i,\nu} \in G$ for all $i \in [1,l]$ and $\nu \in [1,s]$.
  Using the same type of reduction, we may assume that whenever $i$, $j \in [1,l]$ and $\nu \in [1,s]$ are such that $c_{i,\nu} \ne c_{j,\nu}$, then $c_{i,\nu}/c_{j,\nu}$ is not a root of unity.

  \medskip
  Now we deal with the numerators $c_{0,\nu}$ for $\nu \in [1,s]$ and simultaneously with the number of summands.
  Let $\vec c_i = (c_{i,1},\ldots,c_{i,s})$ for $i \in [1,l]$, and $\vec u = (u_1,\ldots,u_s)$.
  Then
  \[
    [u_0\vec u^{\vec n}] F = \sum_{i=1}^l c_{i,0} \vec c_i^{\vec n} = g_1(\vec n) + \cdots + g_r(\vec n),
  \]
  for some functions $g_1$, $\ldots\,$,~$g_r \colon \bN^d \to G_0$.
  Consider this as a unit equation over the group generated by $G$, $-1$, and $c_{i,\nu}$ with $i \in [1,l]$ and $\nu \in [0,s]$.

  Consider a partition $\cP=\{ (I_1,J_1), \ldots, (I_t,J_t) \}$ of the disjoint union $[1,l] \uplus [1,r]$, that is $I_\tau \subseteq [1,l]$, $J_\tau \subseteq [1,r]$, at least one of $I_\tau$ and $J_\tau$ is non-empty, and $I_\tau \cap I_{\tau'} = \emptyset = J_\tau \cap J_{\tau'}$ for $\tau \ne \tau'$.
  Let $\Omega_{\cP} \subseteq \bN^s$ denote the set of all $\vec n \in \bN^s$ such that
  \[
    \sum_{i \in I_\tau} c_{i,0} \vec c_i^{\vec n} - \sum_{j \in J_\tau} g_j(\vec n) =0
  \]
  yields a non-degenerate solution to the unit equation $\sum_{i \in I_{\tau}} X_i + \sum_{j \in J_\tau} X_j = 0$.
  Then the sets $\Omega_\cP$ cover $\bN^s$ as $\cP$ ranges through all partitions.

  We claim that there exists a partition $\cP$ such that for all $i \ne j \in [1,l]$ the quotient $(\vec c_{i}/\vec c_j)^{\vec n}$ takes infinitely many values as $\vec n$ ranges through $\Omega_\cP$.
  Suppose $\beta \in K^*$.
  The set of all $\vec n \in \bN^s$ such that $(\vec c_i/\vec c_j)^{\vec n} = \beta$ is a semilinear set of rank at most $s-1$ by Lemma~\ref{l:lpower}.
  Hence, if $(\vec c_i/\vec c_j)^{\vec n}$ takes finitely many values on $\Omega_\cP$, then $\Omega_\cP$ is contained in a semilinear set of rank at most $s-1$.
  However, since the $\Omega_\cP$ cover $\bN^s$, there must be at least one $\cP$ such that $(\vec c_i/\vec c_j)^{\vec n}$ takes infinitely many values for $\vec n \in \Omega_\cP$.
  
  Now fix such a partition $\cP$.
  Then necessarily $\card{I_\tau} \le 1$ for all $\tau \in [1,t]$ by the theorem on unit equations.
  If $I_\tau = \{i\}$, it follows that
  \[
    c_{i,0} = \sum_{j \in J_\tau} g_j(\vec n)/\vec c_i^{\vec n} \qquad\text{for $\vec n \in \Omega_\cP$.}
  \]
  Thus $c_{i,0}$ is a sum of at most $\card{J_\tau}$ elements of $G$.
  Substituting into \eqref{eq:fsumref} and splitting the sums accordingly (now we allow the same denominator to appear multiple times), we achieve $c_{i,0} \in G$ and $l \le \card{J_1} + \cdots + \card{J_t} \le r$ in this representation.
\end{proof}

%\section{Pólya series and Hadamard sub-invertible series}
\section{Proofs of main theorems}

At this point, Theorems~\ref{thm:main-bezivin} and \ref{thm:main-polya} can easily be proven as follows.

\begin{proof}[Proof of Theorem~\ref{thm:main-bezivin}]
  \ref{t-bezivin:dfinite}$\,\Rightarrow\,$\ref{t-bezivin:rational}
  By Theorem~\ref{thm:rational}, every $D$-finite Bézivin series is rational.

  \ref{t-bezivin:rational}$\,\Rightarrow\,$\ref{t-bezivin:skewgeom-refined}
  Let $F \in K\llbracket \vx\rrbracket$ be a rational Bézivin series.
  By Proposition~\ref{p:skew-geom}, the series is a trivially ambiguous sum of skew-geometric series.
  That the constants in the skew-geometric summands can be taken in $G$, and that the sum can be taken in such a way that no $\vec n \in \bN^d$ is contained in the support of more than $r$ summands, follows from Proposition~\ref{p:coeff-in-g}.

  \ref{t-bezivin:skewgeom-refined}$\,\Rightarrow\,$\ref{t-bezivin:skewgeom} Clear.

  \ref{t-bezivin:skewgeom}$\,\Rightarrow\,$\ref{t-bezivin:dfinite} Trivial, because every rational series is $D$-finite.

  \ref{t-bezivin:skewgeom-refined}$\,\Leftrightarrow\,$\ref{t-bezivin:coeffs} This equivalence is immediate from the definition of a skew-geometric series, Definition~\ref{d:skew-geom}, together with Lemma~\ref{l:independence}.
\end{proof}

\begin{proof}[Proof of Theorem~\ref{thm:main-polya}]
  We apply Theorem~\ref{thm:main-bezivin} with $r=1$.
  Then \ref{t-polya:dfinite}$\,\Leftrightarrow\,$\ref{t-polya:rational} of Theorem~\ref{thm:main-polya} follows from \ref{t-bezivin:dfinite}$\,\Leftrightarrow\,$\ref{t-bezivin:rational} of Theorem~\ref{thm:main-bezivin}.
  The implication \ref{t-polya:rational}$\,\Rightarrow\,$\ref{t-polya:skewgeom} of Theorem~\ref{thm:main-polya} follows from \ref{t-bezivin:rational}$\,\Rightarrow\,$\ref{t-bezivin:skewgeom-refined} of Theorem~\ref{thm:main-bezivin}, taking into account $r=1$.

  To obtain \ref{t-polya:skewgeom}$\,\Rightarrow\,$\ref{t-polya:coeff} of Theorem~\ref{thm:main-polya}, apply \ref{t-bezivin:skewgeom-refined}$\,\Rightarrow\,$\ref{t-bezivin:coeffs} of Theorem~\ref{thm:main-bezivin}, noting again $r=1$.
  This gives a partition of $\bN^d$ into simple linear sets, so that on each such set $\cS = \vec a_0 + \vec a_1 \bN + \cdots + \vec a_s \bN$ one has $f(\vec a_0 + m_1 \vec a_1 + \cdots + m_s \vec a_s) = 0$ or $f(\vec a_0 + m_1 \vec a_1 + \cdots + m_s \vec a_s) = g_0 g_1^{m_1} \cdots g_s^{m_s} \ne 0$ with $g_0$, $g_1$, $\ldots\,$,~$g_s \in G$.
  Taking only these simple linear sets of the partition on which $f$ does not vanish, we obtain a partition of the \emph{support} of $F$, as claimed.

  Finally, the implication \ref{t-polya:coeff}$\,\Rightarrow\,$\ref{t-polya:dfinite} of Theorem~\ref{thm:main-polya} follows from the implication \ref{t-bezivin:coeffs}$\,\Rightarrow\,$\ref{t-bezivin:dfinite} of Theorem~\ref{thm:main-bezivin}.
\end{proof}

Before proving Corollary~\ref{c:hadamard}, we show that algebraic series and their diagonals and sections are finitary $D$-finite.
Recall that the operator $I_{1,2} \colon K\llbracket \vx \rrbracket \to K\llbracket \vx \rrbracket$ assigns to
\[
  F=\sum_{n_1,n_2,\ldots,n_d \in \bN} f(n_1,n_2,n_3,\ldots,n_d) x_1^{n_1}x_2^{n_2}x_3^{n_3}\cdots x_d^{n_d}
\]
its primitive diagonal
\[
  I_{1,2}F = \sum_{n_1,n_3,\ldots n_d \in \bN} f(n_1,n_1,n_3,\ldots,n_d) x_1^{n_1} x_3^{n_3} \cdots x_d^{n_d}.
\]
For $1 \le i < j \le d$, the primitive diagonal operators $I_{i,j}$ are defined analogously.
A \defit{diagonal of $F$} is any composition of diagonal operators applied to $F$.
For instance, the complete diagonal of $F$ is $I_{1,2}I_{2,3}\cdots I_{d-1,d}F = \sum_{n \in \bN} f(n,n,\ldots,n) x_1^n$.
A series $F \in K\llbracket \vx \rrbracket$ is \defit{algebraic} if it is algebraic over the field $K(\vec x)$.

\begin{lem} \label{l:algebraic-finitary}
  If $F \in K\llbracket \vx \rrbracket$ is
  \begin{itemize}
    \item algebraic, or
    \item a diagonal of an algebraic series, or
    \item a section of an algebraic series,
  \end{itemize}
  then $F$ is finitary $D$-finite.
\end{lem}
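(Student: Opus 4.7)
The plan is to show that algebraic series are finitary $D$-finite and then deduce the remaining two cases from closure properties. That algebraic power series are $D$-finite is a classical result of Lipshitz~\cite{lipshitz89}, extending the univariate case; both diagonals and sections preserve $D$-finiteness by further results of Lipshitz, and they trivially preserve finitariness, since both operators merely select a subset of the coefficients of the original series. So it suffices to show that every algebraic $F \in K\llbracket \vx \rrbracket$ is finitary.

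Let $P \in K[\vx, y]$ be an irreducible polynomial with $P(\vx, F) = 0$, and let $A \subseteq K$ be the finitely generated $\bZ$-subalgebra generated by the coefficients of $P$ and by $F(\vec 0)$. After the substitution $y \mapsto y + F(\vec 0)$ I may assume $F(\vec 0) = 0$, so that $P(\vec 0, 0) = 0$. Since $\Char K = 0$ and $P$ is irreducible, separability gives $P_y(\vx, F) \ne 0$ in $K\llbracket \vx \rrbracket$. In the smooth situation $P_y(\vec 0, 0) \ne 0$, I would expand $P(\vx, F) = 0$ and collect coefficients of $\vx^{\vec k}$: for each nonzero $\vec k \in \bN^d$ this yields an equation of the form
\[
  P_y(\vec 0, 0)\, f_{\vec k} + \Phi_{\vec k}\bigl(\{f_{\vec j} : \vec j \in \bN^d,\ \lvert \vec j \rvert < \lvert \vec k \rvert\}\bigr) = 0,
\]
where $f_{\vec k} \coloneqq [\vx^{\vec k}] F$ and $\Phi_{\vec k}$ is a polynomial, with coefficients in $A$, in finitely many earlier coefficients of $F$; the only contribution linear in $f_{\vec k}$ comes from the term $a_1(\vec 0)\cdot [\vx^{\vec k}] F$ (writing $P = a_0(\vx) + a_1(\vx) y + \cdots$), because $F(\vec 0) = 0$ kills the $f_{\vec k}$-contributions coming from higher powers $F^i$ with $i \ge 2$. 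By induction on $\lvert \vec k \rvert$, every coefficient of $F$ then lies in the finitely generated $\bZ$-subalgebra $A[P_y(\vec 0, 0)^{-1}] \subseteq K$, proving finitariness.

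The general case, in which $P_y(\vec 0, 0)$ may vanish, reduces to the smooth one after suitable algebraic preparation: for example, a monomial blow-up combined with a factorization $F = \vx^{\vec m} G$ that brings the transformed series into the smooth case, or equivalently the observation that (after possibly enlarging $A$ by finitely many further elements of $K$) the element $F$ lies in the Henselization of $A[\vx]_{(\vx)}$, whose elements are known to have coefficients in finitely generated $\bZ$-subalgebras of $K$. The main obstacle is precisely this non-smooth step; the underlying algebraic preparation is standard multivariate Newton–Puiseux or Henselization machinery, but one must verify that all constructions can be carried out within a finitely generated $\bZ$-subalgebra of $K$, which they can because the operations involved are purely polynomial in nature.
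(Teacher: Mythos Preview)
Your reduction to ``algebraic $\Rightarrow$ finitary'' and your treatment of the smooth case $P_y(\vec 0, 0) \ne 0$ are correct, but the non-smooth case is not actually proved. You invoke a ``monomial blow-up'' or the Henselization of $A[\vx]_{(\vx)}$ without carrying out either argument, and you yourself identify this as ``the main obstacle.'' In several variables, reducing an arbitrary algebraic power series to the simple-root situation while keeping every constant inside a fixed finitely generated $\bZ$-subalgebra is not a triviality one can wave away; the claim that ``the operations involved are purely polynomial in nature'' is an assertion, not a proof. As written, your argument establishes the lemma only under the additional hypothesis that $(\vec 0, 0)$ is a smooth point of $P=0$.

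The paper takes a different route that sidesteps the smooth/non-smooth dichotomy. After arranging $F(\vec 0)=0$, it rewrites an algebraic relation for $F$ in the form $F = (p_2/q)F^2 + \cdots + (p_m/q)F^m$ with polynomials $p_j,q \in K[\vx]$ and $q \ne 0$, then sets $G \coloneqq F/q$ to obtain $G = p_2 G^2 + p_3 q\, G^3 + \cdots + p_m q^{m-2} G^m$. Because this equation has no constant or linear term in $G$, the condition $G(\vec 0)=0$ yields a recursion that keeps every coefficient of $G$ in the $\bZ$-algebra generated by the coefficients of $p_2,\ldots,p_m,q$; then $F=qG$ is finitary. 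This avoids any desingularization, though when you read it you may want to scrutinize the passage ``$G=F/q$ with $G(\vec 0)=0$'': from $F(\vec 0)=0$ one in fact always has $q(\vec 0)=0$, so $F/q$ need not be a power series, and the paper's argument is terse at exactly this point as well.
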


\begin{proof}
  Algebraic series are $D$-finite, and diagonals and sections of $D$-finite series are $D$-finite \cite[Proposition 2.5 and Theorem 2.7]{lipshitz89}. Moreover, diagonals and sections of finitary series are trivially finitary.
  It therefore suffices to show that algebraic series are finitary.
  
  Let $F \in K\llbracket \vx \rrbracket$ be algebraic. Replacing $F$ by $F - F(0)$, we may assume $F(0)=0$.
  Since $F$ is algebraic, there exist $p_2$, $\ldots\,$,~$p_m$, $q \in K[\vx]$ with $q \ne 0$ such that $F = \frac{p_2}{q} F^2 + \cdots + \frac{p_m}{q} F^m$.
  Setting $G = F/q$ and multiplying the previous equation by $1/q$, we obtain an equation
  \[
    G = p_2 G^2 + p_3q G^3 + \cdots + p_m q^{m-2} G^m.
  \]
  Keeping in mind $G(0)=0$, this yields a recursion for the coefficients of $G$.
  It follows from this recursion that $G$ is finitary.
  Since $F=qG$, also $F$ is finitary.
\end{proof}

\begin{lem} \label{l:hadamard-polya}
  If $F \in K\llbracket \vx \rrbracket$ is a finitary series and $F^\dagger$ is also finitary, then $F$ is a Pólya series.
\end{lem}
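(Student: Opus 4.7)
The plan is to use Roquette's theorem on unit groups of finitely generated $\bZ$-algebras, which is already invoked earlier in the paper (right before Lemma~4.4).

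First I would take finitely generated $\bZ$-subalgebras $A$, $B \subseteq K$ such that all coefficients of $F$ lie in $A$ and all coefficients of $F^\dagger$ lie in $B$, and let $C$ denote the $\bZ$-subalgebra of $K$ generated by $A \cup B$. Then $C$ is still a finitely generated $\bZ$-algebra, and since $C \subseteq K$ with $K$ a field of characteristic zero, $C$ is a domain. Now whenever $f(\vec n) \ne 0$, both $f(\vec n)$ and $f(\vec n)^{-1} = f(\vec n)^\dagger$ lie in $C$, so $f(\vec n)$ is a unit of $C$, and in particular a unit of the integral closure $\overline{C}$ of $C$ (taken inside $K$).

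Next, I would invoke Nagata's theorem that $\overline{C}$ is a finitely generated $C$-module, and is therefore itself a finitely generated $\bZ$-algebra. Then Roquette's theorem (cited in the paragraph preceding Lemma~4.4) gives that $G \coloneqq \overline{C}^\times$ is a finitely generated abelian group, viewed as a subgroup of $K^*$. By the previous paragraph, every nonzero coefficient of $F$ lies in $G$, so every coefficient of $F$ lies in $G_0 = G \cup \{0\}$, showing that $F$ is a Pólya series.

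There is no substantial obstacle; the lemma is essentially a direct application of the Nagata/Roquette finite-generation result, exactly as used earlier in the paper for the proof of the lemma preceding Lemma~4.4. The only point requiring attention is the passage from $C$ to its integral closure, which is needed because $C^\times$ itself need not be finitely generated in general even though the coefficients lie in $C^\times$; integrality is forced automatically for units of $C$, so this step is harmless.
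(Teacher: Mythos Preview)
Your proof is correct and takes essentially the same approach as the paper: combine the two finitely generated $\bZ$-algebras into one, observe that nonzero coefficients are units there, and apply Roquette's theorem. One small correction to your commentary: the detour through the integral closure $\overline{C}$ is unnecessary, and your stated reason for it is mistaken. Roquette's theorem (Lang, Ch.~2, Cor.~7.5, as cited) applies directly to any finitely generated integral domain over $\bZ$, so $C^\times$ is already finitely generated; indeed the paper's own proof applies Roquette directly to $A^\times$ without taking an integral closure. In the earlier lemma on $\sqrt{G}$ the integral closure was needed for a different reason---to guarantee that $n$-th roots of elements of $G$ lie in the ring---not because Roquette requires normality.
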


\begin{proof}
  Since $F$ is finitary, there is a finitely generated $\bZ$-algebra $A \subseteq K$ containing all coefficients of $K$.
  Since $F^\dagger$ is finitary as well, we may also assume that $A$ contains $f(\vec n)^{-1}$  whenever $f(\vec n) \ne 0$.
  Therefore, all nonzero coefficients of $F$ are contained in the unit group $G$ of $A$.
  By a theorem of Roquette (see \cite[Corollary 7.5 of Chapter 2]{lang83}), the group $G$ is finitely generated.
\end{proof}

\begin{proof}[Proof of Corollary~\ref{c:hadamard}]
  Since $F$ and $F^\dagger$ are finitary, $F$ is a Pólya series by Lemma~\ref{l:hadamard-polya}.
  Being also $D$-finite, $F$ satisfies condition \ref{t-polya:dfinite} of Theorem~\ref{thm:main-polya}.
  Conversely, if $F$ satisfies \ref{t-polya:coeff} of Theorem~\ref{thm:main-polya}, then clearly the same is true for $F^\dagger$.
  Thus, the series $F^\dagger$ is $D$-finite.

  The ``in particular'' statement of the corollary follows from Theorem~\ref{thm:main-polya} applied to $F$, respectively, $F^\dagger$.
\end{proof}

%\bibliographystyle{hyperalphaabbr}
%\bibliography{bezivinmultivariate}

\end{document}